\newcommand{\mathsym}[1]{{}}
\newcommand{\eps}{\varepsilon}
\renewcommand{\phi}{\varphi}
\newcommand{\Ho}{\mathcal{H}}
\newcommand{\A}{\mathcal{A}}
\newcommand{\bN}{\mathbb{N}}
\newcommand{\C}{\mathcal{C}}
\newcommand{\B}{\mathcal{B}}
\newcommand{\M}{\mathcal{M}}
\newcommand{\N}{\mathbb{N}}
\newcommand{\fF}{\mathfrak{F}}
\newcommand{\fU}{\mathfrak{U}}
\newcommand{\Disk}{\mathbb{D}}
\newcommand{\BH}{\mathcal{B}(\mathcal{H})}
\newcommand{\KH}{\mathcal{K}(\mathcal{H})}
\renewcommand{\ker}{\operatorname{ker}}
\newcommand{\Rang}{\operatorname{Ran}}
\newcommand{\support}{\operatorname{supp}}
\newcommand{\aid}{e} 
\newcommand{\Inv}{\operatorname{Inv}}
\newcommand{\TopInv}
{\operatorname{TopInv}}
\newcommand{\AppInv}
{\operatorname{AppInv}}
\newcommand{\TopQInv}{\operatorname{TopQInv}}
\newcommand{\Lone}{L^1}
\newcommand{\Lp}{L^p}
\newcommand{\Characters}[1]{\mathcal{M}_{#1}}
\newcommand{\dif}{\mathrm{d}}
\newcommand{\eqdef}{:=}
\newcommand{\DiskAlgebra}{\mathcal{A}(\mathbb{D})}
\newcommand{\SmallDiskAlgebra}{\mathcal{A}_0(\mathbb{D})}
\newcommand{\CompactOperators}{\mathcal{K}}
\newcommand{\monomial}{\chi_1}
\newcommand{\Hull}{\mathcal{H}}
\newcommand{\linspan}{\operatorname{span}}
\newcommand{\cF}{\mathcal{F}}
\newcommand{\hspn}{\hspace{0pt}}
\newtheorem{theorem}{Theorem}[section]
\newtheorem{corollary}[theorem]{Corollary}
\newtheorem{lemma}[theorem]{Lemma}
\newtheorem{proposition}[theorem]{Proposition}
\newtheorem{question}[theorem]{Question}
\theoremstyle{definition}
\newtheorem{example}{Example}[section]
\newtheorem{definition}[theorem]{Definition}
\newtheorem{remark}[theorem]{Remark}
\definecolor{darkgreen}{rgb}{0,0.7,0}
\numberwithin{equation}{section}
\renewenvironment{proof}[1][\bf{Proof}]{\noindent\textsc{#1.} }{\quad\hfill$\blacksquare$ }
\title{Approximately invertible elements\\in non-unital normed algebras}
\author{\texorpdfstring{Kevin Esmeral,
		Hans G. Feichtinger,
		Ondrej Hutn\'{i}k,
		Egor A. Maximenko}%
	{Kevin Esmeral,
		Hans G. Feichtinger,
		Ondrej Hutn\'{i}k,
		Egor A. Maximenko}}
\date{}
\begin{document}
	\maketitle

	\begin{abstract}
		We introduce a concept of
		\emph{approximately invertible elements} in non-unital normed algebras which is, on one side, a natural generalization of invertibility when having approximate identities at hand, and,
		on the other side, it is a direct extension of topological invertibility to non-unital algebras. Basic observations relate approximate invertibility with concepts of topological divisors of zero and density of (modular) ideals. We exemplify approximate invertibility in the group algebra, Wiener algebras, and operator ideals.
		For Wiener algebras with approximate identities (in particular, for the Fourier image of the convolution algebra), the approximate invertibility of an algebra element is equivalent to the property that it does not vanish.  
		We also study approximate invertibility and its deeper connection with the Gelfand and representation theory in non-unital abelian Banach algebras as well as abelian and non-abelian C*-algebras.

		\smallskip\noindent
		MSC (2020): 46H10, 46L05, 43A20.
		
		
		\smallskip\noindent
		\textbf{Keywords}: approximate identity, approximate invertibility, maximal modular ideal, principal ideal, non-unital Banach algebra, non-unital topological algebra.
	\end{abstract}
	
	\tableofcontents
	
	\section{Introduction}
	
	Invertibility is one of the central concepts in the study of unital Banach (or, more generally, topological) algebras. However, this concept is closely related to the existence of an identity in the algebra. Every non-unital Banach algebra $\A$ may be embedded into a unital algebra $\A_1$; such unitization is very useful for some goals,
	but it is not very appropriate for others. In particular, every element of the original algebra $\A$
	is not invertible in the unital algebra $\A_1$.
	
	An important tool in non-unital Banach algebras is the concept of \emph{approximate identity}, which serves as a good substitute for an identity. This concept goes back to the earliest studies of the group algebra $\Lone(G)$ and in this case the approximate identities have been systematically investigated by Weil in his book~\cite{Weil}. However, certain concrete approximate identities were considered long before the abstract definition was introduced. Approximate identities were apparently first considered explicitly by Segal in~\cite{Segal}, who constructed an approximate identity bounded by one for any norm-closed self adjoint subalgebra of the algebra of bounded linear operators on a Hilbert space. Then Dixmier used the approximate identity as the main tool in his book~\cite{Dixmier} to carry through all the basic theory of C*-algebras. Various results on left, right and two-sided approximate identities are described in Dixon's papers~\cite{Dixon1} and~\cite{Dixon2}. From that time many results known for C*-algebras, or for unital algebras, have been extended to algebras with approximate identity, see many books on the topic, e.g.~\cite{Doran-Wichmann}, \cite{Kaniuth}, \cite{Larsen}, and \cite{Palmer}.

	The topological divisors of zero as well as approximate (or, topological) identities are two instances suggesting the topologization of algebraic concepts. In this paper we propose a concept of \emph{approximate invertible elements} as an attempt to provide a ``topologization of invertibility'' in non-unital algebras. The main object of study of this paper reads as follows:

	\begin{definition}\label{def:ApproInv}
		An element $x$ of a topological algebra $\A$ is said to be \emph{approximately right invertible}
		if there is a net $(r_{j})_{j\in J}$ in $\A$ such that
		$(xr_{j})_{j\in J}$ is an approximate identity in $\A$. Similarly, an element $x\in\A$ is \emph{approximately left invertible}
		if there is a net $(l_{j})_{j\in J}$ in $\A$ such that
		$(l_{j}x)_{j\in J}$ is an approximate identity in $\A$.
	\end{definition}
	
	To our best knowledge, the suggested concept of approximate invertibility is not included in any available literature we have seen although it seems very natural in the context of algebras with approximate identities. A closely related notion in unital topological algebras is given by Thatte and Bhatt~\cite{ThatteBhatt} as a topological invertibility. This concept was further developed by Akkar et al.~\cite{Akkar-Beddaa-Oudadess}. In fact, both concepts coincide in unital topological algebras and, moreover, they collapse to invertibility in unital Banach algebras. However, classical invertibility and topological invertibility make no sense in non-unital algebras, thus approximate invertibility serves as an extension of these concepts to non-unital algebras. In the literature one can find yet another related concept -- a topological quasi-invertibility, see~\cite{Najmi, Zohri-Jabbari-2010}.
	Already in 2001 M.~Abel~\cite{Abel} provided a characterization of topological algebras in which the set of topologically quasi-invertible elements coincides with the set of quasi-invertible elements. 
	
	A similar terminology (approximately invertible maps, approximate inverses, or even approximate invertibility) is used in other contexts, and in a different sense, see for example~\cite{Boxer,Harte,Kromer,Schuster,Zames}. Moreover, in operator-theoretic community there is a notion of approximately invertible operators (by a sequence of approximate operators) studied e.g. in the book~\cite{HagenRochSilbermann}. Note that few authors use sometimes the term ``approximate invertible operators'' instead of ``Fredholm operators''.
	
	Regarding the operator theory, our motivation to introduce the approximate invertibility concept is related to projects dealing with density of the range of certain convolution operator arising in the study of Toeplitz and Toeplitz-type operators acting on various function spaces (usually, the weighted Bergman spaces over the upper half-plane, or the unit disk in the complex plane, \cite{EM,EV,EMV,HHM,HMV}, 
	as well as wavelet function spaces on the affine group \cite{H}). In these cases approximate inverses for some particular convolutions have been constructed. In particular, the main step in recent papers \cite{EM,HHM,HMV,HMM}   
	was to construct an appropriate Dirac net, and using this net to show a density result of a function algebra under consideration in certain C*-algebra (e.g., the algebra SO($\mathbb{N}$),  
	or the algebra VSO$(\mathbb{R}_+)$). 
	An idea of Wiener deconvolution technique on the real line has already been elaborated in~\cite{HMM}. 
	Immediately we have observed that the used techniques may be generalized to non-unital normed algebras leading to the concept of approximate invertibility as introduced in the paper.
	We generalize this idea in Section~\ref{sec:applications}.
	
	Although we will work mainly with normed algebras, approximate invertibility notion and many results of this paper can be easily generalized to topological algebras as well.

	The paper is organized as follows.
	In Section \ref{Sec:intro-app-inv} the notion of approximately invertible elements is introduced, exemplified and the relations with convergence, topological divisors of zero and ideals are studied in an elementary way. A detailed study of approximate invertibility in some classes of algebras such as
	Banach algebras, C*-algebras and involutive algebras is given in Section \ref{sec:AppInv in algebras}. In each case we aim to provide a characterization of approximately invertible elements by means of Gelfand transform, modular ideals, or non-degenerate representations, see Theorem~\ref{Appinvl-modi-C*} and Proposition~\ref{prop:criterion_ainv_in_csa}. 
	 
	Section~\ref{Sec:Examp} brings several interesting examples of algebras with or without approximately invertible elements. Particular examples have served us as a motivation for investigating approximate invertibility in some classes of algebras studied in Section \ref{sec:AppInv in algebras}. The most important (from the viewpoint of applications in the study of Toeplitz operator algebras and other parts of time-frequency analysis) is a Wiener algebra possessing an approximate identity where an element of this algebra is approximately invertible
	if and only if it does not vanish, see Theorem~\ref{thm:Wiener_ainv-Wie-alg}. 
	Finally, we provide a necessary and sufficient condition for the left and right approximate invertibility in operator ideals (including the C*-algebra of compact operators acting on an infinite-dimensional separable Hilbert space), see Theorem~\ref{prop:ApprInvCompact-opi}. An application to the density in Banach modules is given in Section~\ref{sec:applications}. As a by-product we get a result about density of the image of the convolution operator. In the last Section \ref{Sec:rem-open-pro} we present some questions, remarks and open problems related to approximately invertible elements for further investigation.
	
	\section{Invertibility in non-unital algebras}\label{Sec:intro-app-inv}

	If the invertibility cannot be used and we still wish to share the good properties of approximate identities, a new concept of \emph{approximately invertible} elements in non-unital normed algebras can be used. In this section we exemplify this concept, relate it with existing ones in the literature and study several elementary properties related to convergence, topological divisors of zero and ideals.

	\subsection{Approximate identities in action}
	
	First we summarize some basic properties of approximate identities in non-unital normed algebras. For details see   \cite{Conway,Dixon1,Dixon2,Dixon3,Doran-Wichmann,Feichtinger,Larsen,Murphy,Palmer,Rudin,Zelazko}.
	
	\begin{definition}\label{DefinitionAppId}
		An \emph{approximate identity}
		in a normed algebra $\A$ is a net $(\aid_{j})_{j\in J}$ in $\A$
		such that for every $x$ in $\A$ it holds
		$\displaystyle
		\lim_{j\in J}\aid_{j}x=\lim_{j\in J}x\aid_{j}=x.
		$
	\end{definition}
	In fact, in this definition it is sufficient to consider only non-zero elements $x\in \A$. In a similar way left and right approximate identity is defined. The existence of an approximate identity in a dense subset of algebra is immediate. Namely, if $\A$ is a normed algebra with an approximate identity
	and $S$ be a dense subset of $\A$, then $\A$ has an approximate identity with values in $S$, see \cite[Lemma 1.4]{Doran-Wichmann}.

	An approximately unital algebra shares some of the properties of a unital algebra.
	Obviously, if $\A$ is a unital algebra with unit $e$ and $J$ is an arbitrary directed set, then we can define an approximate identity $(\aid_{j})_{j\in J}$ in $\A$ easily by the rule $\aid_j=e$ for all $j\in J$. Also, from Definition~\ref{DefinitionAppId} it follows that if an approximate identity is a divergent net, then the normed algebra is non-unital. The following (in some sense reverse) observation is immediate.
	
	\begin{proposition}\label{prop:convergentAI}
		Let $(\aid_j)_{j\in J}$ be an approximate identity in $\A$ having a limit $u\in\A$. Then $u$ is the unit in $\A$.
	\end{proposition}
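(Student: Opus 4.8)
The plan is to show that $u$ acts as a two-sided identity by exploiting the defining property of the approximate identity together with the continuity of multiplication in the normed algebra. Let me sketch the argument.

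First I would fix an arbitrary element $x \in \A$ and use the approximate identity property to write $\lim_{j\in J} \aid_j x = x$. Since the net $(\aid_j)_{j\in J}$ converges to $u$ in the norm topology, and multiplication on the left by the fixed element $x$ is a continuous operation (indeed $\|a x - b x\| \le \|a-b\|\,\|x\|$), I would conclude that $\lim_{j \in J} \aid_j x = u x$. By uniqueness of limits in the normed space $\A$, this forces $ux = x$. An entirely symmetric argument, using $\lim_{j\in J} x \aid_j = x$ together with the continuity of right multiplication, yields $xu = x$. Since $x$ was arbitrary, $u$ satisfies $ux = xu = x$ for all $x \in \A$, which is precisely the statement that $u$ is the unit of $\A$.

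The two computations I would carry out are the limit interchanges
\begin{equation*}
ux = \Bigl(\lim_{j\in J}\aid_j\Bigr)x = \lim_{j\in J}(\aid_j x) = x,
\qquad
xu = x\Bigl(\lim_{j\in J}\aid_j\Bigr) = \lim_{j\in J}(x\aid_j) = x.
\end{equation*}
Each middle equality is justified by joint (in fact, separate suffices here) continuity of the algebra multiplication, and each final equality is the defining property of the approximate identity from Definition~\ref{DefinitionAppId}.

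This proof is essentially routine and I do not anticipate a genuine obstacle; the only point requiring a modicum of care is the justification that one may pull the limit through the product, which relies on the submultiplicativity of the norm (so that left and right multiplication by a fixed element are continuous, indeed Lipschitz). The result is really a uniqueness-of-limits observation: the net $(\aid_j x)_{j\in J}$ converges simultaneously to $ux$ (by continuity of multiplication) and to $x$ (by the approximate-identity property), and in a normed space limits of nets are unique, whence $ux = x$.
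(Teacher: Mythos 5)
Your proof is correct and is exactly the routine argument the paper has in mind: the authors state this proposition without proof, calling it ``immediate,'' and the intended justification is precisely your observation that $(\aid_j x)_{j\in J}$ converges both to $ux$ (by continuity of multiplication, via submultiplicativity of the norm) and to $x$ (by the approximate-identity property), so uniqueness of limits gives $ux=x$, and symmetrically $xu=x$. Nothing is missing.
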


	On the one hand, unbounded approximate identities may look useless, see e.g.~\cite{Dixon3} for some pathological examples in incomplete normed algebras. On the other hand, they may be particularly useful in other contexts, e.g. in  so-called Segal algebras.

	For more details we refer the interested reader to \cite{ReiterStegeman}. Therefore we distinguish between \emph{norm bounded} and \emph{operator norm bounded} approximate identities in normed algebras.
	
	\begin{definition}\label{Defin:normb-opb-app-i}
		Let $(\A,\|\cdot\|)$ be a normed algebra and $(\aid_{j})_{j\in J}$ be a left approximate identity in $\A$. Then
		\begin{itemize}
			\item [\rm{(a)}] $(\aid_{j})_{j\in J}$ is said to be \emph{norm bounded} if there is a finite constant $M>0$ such that
			$\|\aid_{j}\|\leq M$ for every $j\in J$.
			\item [\rm{(b)}] $(\aid_{j})_{j\in J}$ is said to be \emph{operator norm bounded} if the net $(T_{j})_{j\in J}$ of left multiplication operators $T_{j}x=e_{j}x$ is such that
			$$\sup_{j\in J}\|T_{j}\|_{\textrm{op}}<+\infty,\,\,\, \text{where}\, \|\cdot\|_{\textrm{op}}\,\,\text{is the operator norm}.$$
		\end{itemize}
	\end{definition}

	The assertion of the following proposition permits us to extend operator norm bounded approximately identities in separable normed algebras to its completion. In addition,  it is used later in the proof of Theorem~\ref{thm:dense_ideals_implies_ainv} and in the section about convolution algebras.

	\begin{proposition}\label{prop:compl-opnb-algopnb}
		Let $(\A,\|\cdot\|_{\A})$ be a separable normed algebra and $(\tilde{\A},\|\cdot\|_{\tilde{\A}})$ be its completion. Then $(\tilde{\A},\|\cdot\|_{\tilde{\A}})$ has an (operator) norm bounded approximate identity $(\tilde{e}_{n})_{n\in \N}$ if and only if $(\A,\|\cdot\|_{\A})$ has an (operator) norm bounded  approximate identity.
	\end{proposition}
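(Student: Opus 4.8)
The plan is to deduce both implications from one unifying observation together with separability. The observation is the following \emph{density principle}: if $B$ is a normed algebra, $D\subseteq B$ is dense, and $(f_\alpha)$ is a net in $B$ that is operator norm bounded from both sides, $\sup_\alpha\|T_{f_\alpha}\|_{\textrm{op}}\le M$ and $\sup_\alpha\|S_{f_\alpha}\|_{\textrm{op}}\le M$ where $T_{f_\alpha}x=f_\alpha x$ and $S_{f_\alpha}x=xf_\alpha$, and which satisfies $f_\alpha d\to d$ and $d f_\alpha\to d$ for every $d\in D$, then $(f_\alpha)$ is a (two-sided) approximate identity for all of $B$. This is a routine three-term estimate: for $x\in B$ and $d\in D$ one has $\|f_\alpha x-x\|\le\|T_{f_\alpha}\|_{\textrm{op}}\|x-d\|+\|f_\alpha d-d\|+\|d-x\|$, and letting $\alpha$ grow and then $d\to x$ gives $f_\alpha x\to x$ (symmetrically on the other side). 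Since $\|T_e\|_{\textrm{op}}\le\|e\|$ in every normed algebra, the norm bounded case is a special case of the operator norm bounded one, so it suffices to treat the latter.

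For the implication from $\A$ to $\tilde{\A}$, I would first exploit separability of $\A$ to replace a given operator norm bounded approximate identity \emph{net} by a \emph{sequence}: fixing a dense sequence $(x_k)_{k\in\N}$ in $\A$ and using Definition~\ref{DefinitionAppId}, one selects indices producing $(\tilde{e}_n)_{n\in\N}$ with $\|\tilde{e}_n x_k-x_k\|<1/n$ and $\|x_k\tilde{e}_n-x_k\|<1/n$ for all $k\le n$; this subfamily inherits the bound $M$. Since $\A$ is dense in $\tilde{\A}$, applying the density principle with $B=\tilde{\A}$ and $D=\A$ upgrades $(\tilde{e}_n)$ to an operator norm bounded approximate identity for $\tilde{\A}$.

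For the converse, given an operator norm bounded approximate identity $(\tilde{e}_n)_{n\in\N}$ of $\tilde{\A}$, the difficulty is that its terms need not belong to $\A$. The remark after Definition~\ref{DefinitionAppId} already yields an approximate identity of $\tilde{\A}$ valued in the dense subset $\A$, but says nothing about the bound, so I would instead perturb quantitatively. Choosing a dense sequence $(y_k)$ in $\tilde{\A}$ and, by density of $\A$, picking $a_n\in\A$ with $\|a_n-\tilde{e}_n\|_{\tilde{\A}}<\delta_n$ for $\delta_n:=\min\{1,(n(1+\max_{k\le n}\|y_k\|))^{-1}\}$, one arranges $\|(a_n-\tilde{e}_n)y_k\|<1/n$ and $\|y_k(a_n-\tilde{e}_n)\|<1/n$ for $k\le n$, so $(a_n)$ acts as an approximate identity on the dense set $(y_k)$; and since $\|a_n-\tilde{e}_n\|<1$, its multiplication operators obey $\|T_{a_n}\|_{\textrm{op}}\le M+1$ and $\|S_{a_n}\|_{\textrm{op}}\le M+1$. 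The density principle then makes $(a_n)$ an approximate identity of $\tilde{\A}$, and since $a_n\in\A\subseteq\tilde{\A}$ it is in particular an approximate identity of $\A$. In the norm bounded variant the same $a_n$ satisfy $\|a_n\|\le\|\tilde{e}_n\|+1\le M+1$ directly.

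The step I expect to be the main obstacle is this converse perturbation, where two competing requirements must be balanced: staying close enough to $(\tilde{e}_n)$ to preserve the approximate-identity action on a countable dense set (forcing the diagonal choice of tolerances $\delta_n$ linked to $(y_k)$), while staying uniformly bounded in operator norm (secured by the crude bound $\delta_n\le1$). A secondary technical point to verify carefully is that operator norm boundedness is intrinsic to a multiplier, independent of whether $a\in\A$ is viewed as acting on $\A$ or on its completion $\tilde{\A}$ --- which holds because such a multiplier is determined by its action on the dense subalgebra $\A$ --- so that the bound $M+1$ obtained in $\tilde{\A}$ is also valid in $\A$; together with handling left and right multiplications on an equal footing throughout.
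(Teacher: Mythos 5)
Your proof is correct and follows essentially the same route as the paper: the nontrivial direction ($\tilde{\A}$ to $\A$) is handled by perturbing $\tilde{e}_n$ into $\A$ within a tolerance tending to zero --- the paper invokes \cite[Lemma 1.4]{Doran-Wichmann} to obtain $\|e_n-\tilde{e}_n\|_{\tilde{\A}}<1/n$ where you construct $a_n$ by hand --- and then observing that the multiplication-operator norms grow by at most $1$. Your density principle and the explicit treatment of the direction from $\A$ to $\tilde{\A}$ simply fill in details the paper delegates to citations (\cite[\textsection 6]{Reiter71} for the norm bounded case) and to ``an easy exercise left to the reader.''
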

	\begin{proof} 
		The proof for bounded approximate identities may be found in~\cite[\textsection 6]{Reiter71}. Next, we give a proof for operator bounded approximate identities.
		
		Suppose that $(\tilde{\A},\|\cdot\|_{\tilde{\A}})$ has operator norm bounded approximate identity $(\tilde{e}_{n})_{n\in \N}$ and let us denote by $\tilde{T}_{n}$ the linear operators given by $\tilde{T}_{n}x=e_{n}x$, $x\in\tilde{\A}$, where  $M=\sup_{n\in\N}\|\tilde{T}_{n}\|_{\textrm{op}}<\infty$ by Definition \ref{Defin:normb-opb-app-i}. Then  by \cite[Lemma 1.4]{Doran-Wichmann} there exists a sequence $(e_{n})_{n\in\N}$ of elements in $\A$ such that $(e_{n})_{n\in\N}$ is an approximate identity of $\tilde{\A}$ (and of course in $\A$) with   $\|e_{n}-\tilde{e}_{n}\|_{\tilde{\A}}<\frac{1}{n}$. Denote by $T_{n}$ the linear operator acting on $\tilde{\A}$ given by $T_{n}x=e_{n}x$. Therefore, for every $x\in\A$
		\begin{align*}
			\|T_{n}x\|_{\A}&\leq \|T_{n}x-\tilde{T}_{n}x\|_{\tilde{\A}}+\|\tilde{T}_{n}x\|_{\tilde{\A}}\leq \|e_{n}-\tilde{e}_{n}\|_{\tilde{\A}}\,\|x\|_{\A}+M\,\|x\|_{\A}\\
			&\leq \dfrac{\|x\|}{n}+M\, \|x\|_{\A}\leq \left(1+\sup_{n\in\N}\|\tilde{T}_{n}\|_{\textrm{op}}\right)\,\|x\|_{\A}.
		\end{align*}
		Consequently, $\|T_{n}\|_{\textrm{op}}\leq M+1$ for every $n\in\N$ and  hence we have   $\sup_{n\in\N}\|T_{n}\|_{\textrm{op}}<+\infty$. i.e., $(e_{n})_{n\in\N}$ is an operator norm bounded approximate identity in $\A$.
		
		Conversely, it is an easy exercise left to the reader to extend an operator norm bounded approximate identity for $(\A,\|\cdot\|_{\A})$ to all of $(\tilde{\A},\|\cdot\|_{\tilde{\A}})$, observing that it is still an approximate identity there, and of course still bounded in the operator norm.
	\end{proof}
	
	\begin{remark}
		The proof of necessity of $(\A,\|\cdot\|_{\A})$ having an operator norm bounded approximate identity in Proposition \ref{prop:compl-opnb-algopnb} may be done using nets instead of sequences.
	\end{remark}
	
	\subsection{Approximate invertibility and related concepts}

	In a unital algebra $\A$ (with $e$ being the unit in $\A$) the invertibility of an element $x\in\A$ may be described using the net $y_j=x^{-1}$ for each $j\in J$ from a directed net $J$, such that for each $z\in\A$ it holds $$\lim_{j\in\,J}xy_jz = \lim_{j\in\,J}zxy_j = \lim_{j\in\,J}y_jxz = \lim_{j\in\,J}zy_jx = z.$$ It means that the nets $(xy_j)_{j\in\,J}$ and $(y_jx)_{j\in\,J}$ are approximate identities in $\A$. A natural generalization of this observation for the case of non-unital normed algebras is given in Definition~\ref{def:ApproInv} being the main object of our study.
	
	If the corresponding approximate identity $(xr_{j})_{j\in J}$, resp. $(l_{j}x)_{j\in J}$, is norm bounded (operator norm bounded), then we speak about \textit{boundedly} (\textit{op-boundedly}) approximately right, resp. left invertible element $x\in\A$. Clearly, zero element in $\A\ne\{0\}$ cannot be approximately invertible in $\A$.

	As far as we know the concept of approximate invertibility is not included in any available literature we have seen, although it seems very natural in the context of algebras with approximate identities. A closely related notion is given by Thatte and Bhatt~\cite{ThatteBhatt} who defined the concept of \textit{topological invertibility in unital topological algebras}. This concept was further developed by Akkar et al.~\cite{Akkar-Beddaa-Oudadess} with improving some proofs. For the sake of consistency with our considerations we recall it here in the context of normed algebras only. 
	
	\begin{definition}
		An element $x$ in a unital normed algebra $\A$ (with unit $e$) is called \textit{topologically right invertible} in $\A$, if there is a net $(r_j)_{j\in J}$ in $\A$ such that $xr_j \to  e$. Similarly, $x\in\A$ is called  \textit{topologically left invertible} in $\A$, if there is a net $(l_j)_{j\in J}$ in $\A$ such that $l_jx \to e$. 
	\end{definition}
	
	\begin{example}
		Arens' algebra $$L^w([0,1])\eqdef \bigcap_{1\leq p\leq +\infty} L_p([0,1])$$ is a unital complete metrizable algebra with pointwise operations and the topology of $L_p$-convergence for each $1\leq p<+\infty$. Then $f(x)=x$ is not invertible in $L^w([0,1])$, but it is topologically invertible, because there is a sequence $g_n(x) = \chi_{[1/n,1]}(x)\frac{1}{x}\in L^w([0,1]), \, n\in\mathbb{N},$ such that $\|fg_n-1\|_p \to 0$ for each $p$. Also, this sequence serves to show that $f$ is approximately invertible as well. In fact, in unital algebras both concepts coincide.
	\end{example}
	
	\begin{proposition}\label{prop:TopInv<->AppInv}
		An element $x$ in a unital normed algebra $\A$ is approximately right invertible if and only if it is topologically right invertible.
	\end{proposition}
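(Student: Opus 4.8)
The plan is to show that a single net witnesses both notions, so that the two existence statements coincide; I would therefore prove the two implications separately, in each case retaining the \emph{same} net $(r_j)_{j\in J}$ rather than constructing a new one.

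For the implication ``topologically right invertible $\Rightarrow$ approximately right invertible'', I would start from a net with $xr_j \to e$ and check directly that $(xr_j)_{j\in J}$ meets Definition~\ref{DefinitionAppId}. Writing $u_j := xr_j$, for an arbitrary $z\in\A$ I would invoke submultiplicativity of the norm to bound
$\|u_j z - z\| = \|(u_j-e)z\| \le \|u_j-e\|\,\|z\|$
and symmetrically $\|z u_j - z\| = \|z(u_j-e)\| \le \|z\|\,\|u_j-e\|$. Since $\|u_j-e\|\to 0$ by hypothesis, both right-hand sides tend to $0$, so $\lim_j u_j z = \lim_j z u_j = z$ for every $z$; that is, $(u_j)_{j\in J}$ is an approximate identity, as required.

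For the converse I would exploit that $\A$ is unital, which furnishes a distinguished admissible test element. If $(xr_j)_{j\in J}$ is an approximate identity, then applying the defining limit of Definition~\ref{DefinitionAppId} to $z=e$ yields $xr_j = (xr_j)\,e \to e$, which is precisely topological right invertibility. Thus the same net $(r_j)_{j\in J}$ serves in both directions.

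I do not expect any genuine obstacle here; the proposition is essentially a matter of unpacking the two definitions. The only two points worth flagging are that joint continuity of multiplication (guaranteed by the submultiplicative norm on the normed algebra $\A$) drives the first implication, while the availability of the unit $e$ as a legitimate test element drives the second. The left-invertible version is obtained verbatim after interchanging the order of the two factors in each product.
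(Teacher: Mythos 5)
Your proof is correct and follows essentially the same route as the paper's: in one direction you test the approximate identity against $z=e$ to get $xr_j\to e$, and in the other you use submultiplicativity to bound $\|xr_jz-z\|$ and $\|zxr_j-z\|$ by $\|xr_j-e\|\,\|z\|$, exactly as in the paper, with the same net serving in both directions. No gaps.
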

	
	\begin{proof} Let $x$ be an approximately right invertible element in $\A$ (with unit $e$). Thus, there is a net $(r_j)_{j\in J}$ in $\A$ such that for each $z\in \A$ we have $xr_jz\to z$. Then for $z=e$ we get topological right invertibility of $x$. 
		
		If $x$ is topologically right invertible in $\A$, then there is a net $(r_j)_{j\in J}$ in $\A$ such that $\|xr_j-e\|\to 0$. Then for each $z\in\A$,  we have $\|xr_jz-z\| = \|(xr_j-e)z\|\leq \|xr_j-e\|\cdot\|z\|.$ Thus, $\lim_{j}\|xr_jz-z\|=0$.  Similarly, $\|zxr_j-z\|\to 0$. Therefore, $x$ is approximately right invertible in $\A$. 
	\end{proof}
	
	For a normed algebra $\A$ we denote by $\Inv_r(\A)$ the set of all right invertible elements in $\A$, by $\TopInv_r(\A)$ the set of all topologically right invertible elements in $\A$, and by $\AppInv_r(\A)$ the set of all approximately right invertible elements in $\A$, respectively. Their left versions are denoted analogously.
	It is well-known from~\cite{ThatteBhatt} that in a unital Banach algebra topological invertibility coincides with invertibility. This implies that in a unital Banach algebra $\A$ we have
	$ \Inv_r(\A)=\TopInv_r(\A)=\AppInv_r(\A).$
	
	Finally, the concept of approximate right (left) invertibility generalizes topological right (left) invertibility to the case of non-unital algebras, where the concepts of invertible and topologically invertible elements are not defined. Indeed, 
	\begin{itemize}
		\item if $\A$ is a non-unital normed (or topological) algebra,
		then $\Inv_r(\A)=\TopInv_r(\A)=\emptyset\subseteq\AppInv_r(\A)$;
		\item if $\A$ is a unital normed (or topological) algebra,
		then $\Inv_r(\A)\subseteq\TopInv_r(\A)=\AppInv_r(\A)$;
		\item if $\A$ is a unital Banach algebra, then
		$\Inv_r(\A)=\TopInv_r(\A)=\AppInv_r(\A)$.
	\end{itemize}
	From this point of view we are mainly interested in non-unital algebras throughout this paper to investigate properties of approximate invertibility in detail.

	In the literature one can find yet another related concept -- a topological quasi-invertibility, see~\cite{Najmi, Zohri-Jabbari-2010}. Since we are working with normed algebras $\A$, we give the definition in that context although it can be stated in the context of topological algebras in general. Given $a,b\in\A$, we define the "circle operation" $a\circ b:=ab-a-b$. 
	
	\begin{definition}\rm
		An element $a$ of a 
		normed algebra $\A$ is called \emph{right quasi-invertible} 
		if there exists an element $b\in\A$ such that $a\circ b=0$. An element $a\in\A$  
		is \emph{topologically right quasi-invertible} 
		if there exists a net $(b_j)_{j\in J}$
		in $\A$ such that the net $(a\circ b_j)_{j\in J}$ converges to the zero element of $\A$. We denote the set of all topologically right quasi-invertible elements of $\A$ by $\TopQInv_r(\A)$.
	\end{definition}
	
	Note that the zero element of an algebra $\A$ always belongs to $\TopQInv_r(\A)$. It should be noted here that the set of topologically quasi-invertible elements had also been considered by M. Abel~\cite{Abel} already in 2001, however from another point of view. Indeed, Abel provided a characterization of topological algebras in which  the set of topologically quasi-invertible elements coincides with the set of quasi-invertible elements. Recently, Abel and Z\'{a}rate-Rodr\'{i}guez~\cite{A-ZR} studied the properties of left, right, and two-sided topologically quasi-invertible elements showing that all these sets are $G_\delta$-sets in F-algebras $\A$.
	
	A connection between $\TopQInv_r(\A)$ and $\TopInv_r(\A)$ for a unital normed algebra $\A$ is well-known. Let us mention that in a unital algebra $\A$ with unit $e$ the equation $a\circ b=0\Leftrightarrow (e-a)(e-b)=e$ holds for each $a,b\in\A$. This implies that $x$ is right quasi-invertible if and only if $e-x$ is right invertible. A topological version then reads as follows.

	\begin{proposition}{\cite[Proposition 2.2(i)]{Zohri-Jabbari-2010}}\label{prop-TopQInvr=e-TopInvr}
		Let $\A$ be a 
		normed algebra with unit $e$. Then 
		$a\in\TopQInv_r(\A)$ if and only if $(e-a)\in\TopInv_r(\A)$. 
	\end{proposition}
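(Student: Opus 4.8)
The plan is to topologize the purely algebraic equivalence recalled immediately before the statement. The starting point is the identity
$(e-a)(e-b) = e - a - b + ab = e + (ab-a-b) = e + a\circ b$,
valid for all $a,b\in\A$, which I would first record explicitly. It shows that the substitution $b\mapsto e-b$ (equivalently $r\mapsto e-r$) is exactly the device that interchanges the circle operation with ordinary multiplication, up to the additive constant $e$.

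For the forward implication, I would suppose $a\in\TopQInv_r(\A)$, so that there is a net $(b_j)_{j\in J}$ with $a\circ b_j\to 0$. Setting $r_j := e-b_j$ and using the identity gives $(e-a)r_j = (e-a)(e-b_j) = e + a\circ b_j$. Since addition is continuous in a normed algebra, $a\circ b_j\to 0$ yields $(e-a)r_j\to e$, that is, $(e-a)\in\TopInv_r(\A)$.

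For the converse, I would suppose $(e-a)\in\TopInv_r(\A)$, witnessed by a net $(r_j)_{j\in J}$ with $(e-a)r_j\to e$. Putting $b_j := e-r_j$, so that $e-b_j = r_j$, the same identity gives $a\circ b_j = (e-a)(e-b_j) - e = (e-a)r_j - e \to e - e = 0$. Hence $a\in\TopQInv_r(\A)$.

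I do not expect a genuine obstacle here: the entire content is the algebraic identity, and the only points to verify are that $r\leftrightarrow e-r$ is a bijection on the candidate nets in $\A$ and that joint continuity of multiplication, together with continuity of translation by $e$, lets convergence pass through the substitution. Because only these continuity properties are used, the argument in fact transfers verbatim to any unital topological algebra, not merely the normed case.
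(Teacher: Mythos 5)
Your proof is correct: the computation $(e-a)(e-b)=e+a\circ b$ is right, and the substitution $r_j=e-b_j$ transfers convergence in both directions exactly as you claim. The paper itself gives no proof, citing \cite[Proposition 2.2(i)]{Zohri-Jabbari-2010} and merely recording the algebraic identity $a\circ b=0\Leftrightarrow(e-a)(e-b)=e$ in the preceding paragraph; your argument is precisely the topologization of that identity, i.e., the intended route.
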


	The previous result in a unital algebra $\A$ can be written as $\TopQInv_r(\A)=\{e\}-\TopInv_r(\A)$. In the case of a non-unital algebra $\A$ we can use \cite[Proposition 2.2(iii)]{Zohri-Jabbari-2010} to conclude that $\TopQInv_r(\A)=\TopQInv_r(\A_{1})$, where $\A_{1}$ is the unitization of the normed algebra $\A$. Thus, by Proposition \ref{prop-TopQInvr=e-TopInvr} and remarks above  
	we have
	$\TopQInv_r(\A)=\{(0,1)\}-\AppInv_r(\A_{1}).$

	\subsection{Relation with 
		ideals}\label{sec:ideals}
	
	In the literature we can find a little bit different definition of topological invertibility, see e.g.~\cite{Peimbert-Hoyo}. Indeed, the equalities $\overline{x\A} = \overline{\A x} = \A$, with $x$ being an element of a unital normed algebra $\A$, are taken therein as the  definition of topological invertibility of $x\in\A$. Now we extend these equalities to the case of approximate invertibility. Note that the image (range) of the left multiplication operator $L_x$ is the right principal ideal
	generated by $x$, i.e., $\operatorname{Ran}(L_x) = x\A$.
	Now, we show that the approximate right invertibility of $x$
	is closely related to the density of $x\A$ in $\A$.

	\begin{theorem}\label{thm:dense_ideals_implies_ainv}
		Let $\A$ be a normed algebra with an approximate identity and $x\in\A$.
		Then $x\A$ is dense in $\A$, if and only if $x\in\AppInv_r(\A)$.
	\end{theorem}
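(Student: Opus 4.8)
The plan is to prove the two implications separately. Throughout, write $L_x$ for the left multiplication operator, so that $x\A = \Rang(L_x)$, and recall we are given that $\A$ carries an approximate identity $(\aid_j)_{j\in J}$.

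First I would prove the direction $x\in\AppInv_r(\A)\Rightarrow x\A$ dense. This is the easy half. If $x$ is approximately right invertible, there is a net $(r_j)_{j\in J}$ with $(xr_j)_{j\in J}$ an approximate identity. Then for every $z\in\A$ we have $xr_j z\to z$, and since each $xr_j z = L_x(r_j z)\in x\A$, the element $z$ is a limit of elements of the right principal ideal $x\A$. Hence $\overline{x\A}=\A$, i.e.\ $x\A$ is dense. No completeness or boundedness is needed here; this is a direct unwinding of Definition~\ref{def:ApproInv}.

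The converse $x\A$ dense $\Rightarrow x\in\AppInv_r(\A)$ is the substantive direction and where I expect the main obstacle. The naive attempt is: given the approximate identity $(\aid_j)_{j\in J}$, density lets us approximate each $\aid_j$ by an element of $x\A$, say $\|xr_j-\aid_j\|$ small; one then hopes $(xr_j)_{j\in J}$ is again an approximate identity. The difficulty is genuine: to conclude $xr_j z\to z$ uniformly enough for \emph{all} $z$ simultaneously, one needs the multiplication operators $L_{xr_j}$ to be \emph{uniformly bounded} in operator norm, otherwise the error terms $(xr_j-\aid_j)z$ need not go to zero even though $\|xr_j-\aid_j\|\to 0$. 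This is exactly the role of an \emph{operator norm bounded} approximate identity. The natural strategy, therefore, is to first arrange (or assume, via Proposition~\ref{prop:compl-opnb-algopnb} and the surrounding remarks, that in the relevant setting the approximate identity may be taken operator norm bounded with bound $M$), and then run the approximation argument quantitatively: choose $r_j$ with $\|xr_j-\aid_j\|$ small relative to $1/(1+\text{the op-bound})$, and estimate, for fixed $z$,
\begin{align*}
\|xr_j z - z\| &\le \|xr_j z - \aid_j z\| + \|\aid_j z - z\|\\
&\le \|L_{xr_j-\aid_j}\|_{\mathrm{op}}\,\|z\| + \|\aid_j z - z\|,
\end{align*}
where the second summand vanishes in the limit because $(\aid_j)$ is an approximate identity, and the first is controlled once the operator norms of $L_{xr_j}$ are uniformly bounded. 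The key step is thus to upgrade $\|xr_j-\aid_j\|\to 0$ to operator-norm control of the correction net, mirroring the triangle-inequality estimate already carried out in the proof of Proposition~\ref{prop:compl-opnb-algopnb}.

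To make the index structure precise I would not index $r_j$ by $J$ alone but by a refined directed set: for each pair $(j,n)$ with $j\in J$ and $n\in\N$, density gives $r_{j,n}$ with $\|xr_{j,n}-\aid_j\|<1/n$, and I would pass to the product directed set $J\times\N$ (ordered coordinatewise) so that the doubly-indexed net $(xr_{j,n})$ simultaneously approximates the approximate identity and drives the approximation error to zero. One then verifies that $(xr_{j,n})_{(j,n)\in J\times\N}$ satisfies $\lim xr_{j,n}z=z$ for every $z$, which is precisely the assertion that $(xr_{j,n})$ is an approximate identity of the form $xr$, giving $x\in\AppInv_r(\A)$. The main obstacle, to reiterate, is handling the interchange of limits / uniform boundedness: without the operator-norm boundedness hypothesis the correction terms are not controllable, so the crux of the write-up is to identify cleanly where that boundedness enters and to confirm it holds in the stated hypotheses (an algebra \emph{with an approximate identity}, where one may invoke the reduction results established earlier).
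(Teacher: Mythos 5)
Your diagnosis of the ``main obstacle'' in the hard direction is wrong, and since you leave the proof explicitly conditional on resolving it, this is a genuine flaw. You claim that without uniform operator-norm bounds on the $L_{xr_j}$ the correction terms $(xr_j-\aid_j)z$ are not controllable. But in a normed algebra the norm is submultiplicative, so $\|L_a\|_{\mathrm{op}}\le\|a\|$ for every $a$; hence $\|(xr_{j,n}-\aid_j)z\|\le\|xr_{j,n}-\aid_j\|\,\|z\|\le \|z\|/n$, and for each \emph{fixed} $z$ this dies along your product net $J\times\N$ with no uniform bound whatsoever. Definition~\ref{DefinitionAppId} demands only pointwise-in-$z$ convergence, never uniformity over $z$, so the ``interchange of limits / uniform boundedness'' issue you flag simply does not arise. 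Worse, your plan to ``arrange or assume'' an operator-norm-bounded approximate identity via Proposition~\ref{prop:compl-opnb-algopnb} is both unobtainable and self-defeating: the theorem carries no boundedness hypothesis, approximate identities in normed algebras may be unbounded in every sense (nontrivial Segal algebras are the standard example), and a proof conditioned on op-boundedness would establish only a strictly weaker statement than the one asserted.

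The irony is that once you strike that condition, your own construction is complete and is essentially the paper's proof: the paper disposes of this direction in one line by citing \cite[Lemma 1.4]{Doran-Wichmann} (a normed algebra with an approximate identity has an approximate identity with values in any dense subset, here $S=x\A$), and your $J\times\N$ net with the estimate $\|xr_{j,n}z-z\|\le\|xr_{j,n}-\aid_j\|\,\|z\|+\|\aid_jz-z\|$ is precisely how that lemma is proved. To finish cleanly: given $z$ and $\eps>0$, pick $j_0$ with $\|\aid_jz-z\|<\eps/2$ and $\|z\aid_j-z\|<\eps/2$ for $j\succeq j_0$, and $n_0>2\|z\|/\eps$; then both $\|xr_{j,n}z-z\|<\eps$ and $\|zxr_{j,n}-z\|\le\|z\|\,\|xr_{j,n}-\aid_j\|+\|z\aid_j-z\|<\eps$ for $(j,n)\succeq(j_0,n_0)$ --- note you must check the right-hand condition too, which your write-up omits. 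Your easy direction agrees with the paper's verbatim.
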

	
	\begin{proof}
		Suppose that $x\A$ is dense in $\A$. Then by \cite[Lemma 1.4]{Doran-Wichmann}
		we can find an approximate identity $(\aid_j)_{j\in J}$ with values in $x\A$.
		It means that $\aid_j$ may be written as $xr_{j}$
		with some $r_{j}\in\A$,
		and by definition of approximate invertibility it means that
		$x\in\AppInv_r(\A)$.
		
		Conversely, if  $x\in\AppInv_{r}(\A)$ then there is a net $(r_{j})_{j\in J}$   in $\A$
		such that $(xr_{j})_{j\in J}$ is an approximate identity in $\A$.
		Hence, given $z\in\A$, the net $(xr_{j}z)_{j\in J}$
		takes values in $x\A$ and converges to $z$. 
		Therefore, $x\A$ is dense in $\A$.
	\end{proof}

	Note that if in a normed algebra $\A$  with an approximate identity there exists a principal dense ideal, namely $x_{0}\A$,  then by Theorem \ref{thm:dense_ideals_implies_ainv} we have  $x_{0}\in\AppInv(\A)$, i.e., $\AppInv(\A)\neq\emptyset.$

	\begin{proposition}\label{prop:x-oveAx}
		Let $\A$ be a  
		normed algebra with an approximate identity.  
		Then $x\in\overline{x\A }$.
	\end{proposition}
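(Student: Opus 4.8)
The plan is to exhibit an explicit net in $x\A$ that converges to $x$, reading the conclusion off directly from the defining property of an approximate identity. Let $(\aid_{j})_{j\in J}$ be an approximate identity in $\A$. First I would form the net $(x\aid_{j})_{j\in J}$. By Definition~\ref{DefinitionAppId}, the (right) approximate identity property gives $\lim_{j\in J} x\aid_{j} = x$. Moreover, each term $x\aid_{j}$ has the form $xr$ with $r=\aid_{j}\in\A$, so every term of this net lies in the right principal ideal $x\A = \Rang(L_x)$.

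Thus $x$ is the limit of a net taking values entirely in $x\A$. The only remaining ingredient is the standard topological fact that whenever a net contained in a subset $S$ of a normed space converges to a point $p$, then $p\in\overline{S}$: every neighborhood of $p$ eventually contains the net and hence meets $S$. Applying this with $S=x\A$ and $p=x$ yields $x\in\overline{x\A}$, as claimed.

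I do not anticipate any genuine obstacle here: the statement is an immediate consequence of the defining property of the approximate identity, and the argument requires neither norm-boundedness of the identity nor completeness of $\A$. The one point deserving a word of care is that the approximate identity is a net (indexed by the directed set $J$) rather than a sequence, so the passage to the closure should be justified via net convergence as above; no further machinery is needed.
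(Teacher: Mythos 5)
Your proof is correct and is essentially identical to the paper's: both take the net $(x\aid_j)_{j\in J}$, which lies in $x\A$ and converges to $x$ by the defining property of the approximate identity, and conclude $x\in\overline{x\A}$. Your extra remarks on net convergence and the absence of boundedness/completeness assumptions are accurate but add nothing beyond the paper's one-line argument.
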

	
	\begin{proof}
		Let $(e_{j})_{j\in\,J}$ be an approximate identity in $\A$. Then, the net  $(xe_{j})_{j\in\,J}$ takes values in $x\A $ and hence $\displaystyle x  = \lim_{j\in J}xe_{j}\in\overline{x\A }$.
	\end{proof}
	
	\begin{proposition}
		Let $\A$ be a non-unital normed algebra and
		$x\in\AppInv_r(\A)$.
		Then $x\notin\A x$.
	\end{proposition}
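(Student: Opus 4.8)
The plan is to argue by contradiction. Suppose $x\in\A x$, so that $x=ax$ for some $a\in\A$. Since $x\in\AppInv_r(\A)$, there is a net $(r_j)_{j\in J}$ for which $e_j:=xr_j$ is a (two-sided) approximate identity in $\A$; in particular $\A$ does carry an approximate identity, which is what makes the statement meaningful. The first thing I would record is that the relation $x=ax$ forces $a e_j = a x r_j = x r_j = e_j$ for every $j$, i.e. $a$ acts as a left identity on the whole approximate identity.

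From $a e_j = e_j$ I would extract that $a$ is in fact a two-sided unit of $\A$, which contradicts non-unitality. The left-identity part is the easy half: for any $z\in\A$, separate continuity of the product gives $a(e_j z)\to az$, while associativity together with $a e_j = e_j$ gives $a(e_j z) = (a e_j)z = e_j z \to z$; comparing the two limits yields $az=z$.

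The harder part, and the step I expect to be the main obstacle, is obtaining the right-identity relation $za=z$, since the continuity argument above does not apply directly to a product in which $a$ sits on the right. Here I would instead exploit that $(e_j)$ is a \emph{right} approximate identity: for fixed $z$ the net $(za)e_j$ converges to $za$, whereas associativity and $a e_j = e_j$ rewrite it as $(za)e_j = z(a e_j) = z e_j$, which converges to $z$. Equating the two limits gives $za=z$.

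Combining $az=z$ and $za=z$ for all $z\in\A$ shows that $a$ is a two-sided identity of $\A$, contradicting the hypothesis that $\A$ is non-unital; hence no such $a$ exists and $x\notin\A x$, as claimed. The whole argument is purely net-theoretic and relies only on separate continuity of multiplication and the two-sided approximate-identity property, so it should transfer verbatim to topological algebras.
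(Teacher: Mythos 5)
Your proof is correct and follows essentially the same route as the paper: from $x=ax$ you extract $a(xr_j)=xr_j$ and conclude that $a$ would have to be a unit of $\A$, contradicting non-unitality. The only difference is packaging — the paper observes that $e_j = ae_j \to a$, so the approximate identity converges, and then cites Proposition~\ref{prop:convergentAI}, whereas you verify $az=z$ and $za=z$ directly; the right-identity step you flag as the ``harder part'' is exactly what that proposition (whose proof is the same two-limit computation) handles in one stroke.
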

	
	\begin{proof}
		Suppose that $x\in\A x$.
		Then there is an element $y\in\A$ such that $x=yx$.
		On the other hand, since $x$ is approximately right invertible,
		there is a net $(r_{j})_{j\in\,J}$ in $\A$
		such that $(xr_{j})_{j\in\,J}$ is an approximate identity in $\A$.
		Now, $y=\lim\limits_{j\in\,J} yxr_{j}=\lim\limits_{j\in\,J} xr_{j}$.
		By Proposition~\ref{prop:convergentAI}, $y$ is an identity in $\A$,
		which contradicts the hypothesis.
	\end{proof}

	\begin{theorem}\label{conj-app=A/I}
		Let $\A$ be a non-unital normed algebra with an approximate identity   
		and $\mathcal{I}_{\hspn r}(\A)$ be the set of all closed proper right ideals in $\A$. Then \begin{equation}\label{app=au}
			\AppInv_{r}(\A)=  \A\setminus \bigcup_{I\in\mathcal{I}_{\hspn r}(\A)}\hspace{-5pt} I.
		\end{equation}
	\end{theorem}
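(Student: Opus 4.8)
The plan is to reduce the set identity \eqref{app=au} to the density criterion of Theorem~\ref{thm:dense_ideals_implies_ainv}, using the single structural observation that $\overline{x\A}$ is the \emph{smallest} closed right ideal containing $x$.

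First I would record the basic facts about $\overline{x\A}$. Since $(x\A)\A\subseteq x\A$, the set $x\A$ is a right ideal, and the closure of a right ideal in a normed algebra is again a closed right ideal; hence $\overline{x\A}$ is a closed right ideal. Moreover, by Proposition~\ref{prop:x-oveAx} the presence of an approximate identity forces $x\in\overline{x\A}$, so $\overline{x\A}$ is a closed right ideal \emph{containing} $x$. I would then check that it is the smallest such: if $I$ is any closed right ideal with $x\in I$, then $x\A\subseteq I$ because $I$ absorbs multiplication on the right, and passing to closures gives $\overline{x\A}\subseteq \overline{I}=I$.

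Next I would translate approximate right invertibility through Theorem~\ref{thm:dense_ideals_implies_ainv}: one has $x\in\AppInv_r(\A)$ if and only if $x\A$ is dense, that is $\overline{x\A}=\A$. Combined with the previous paragraph this reads: $x$ is approximately right invertible if and only if the smallest closed right ideal containing $x$ equals $\A$, equivalently $x$ lies in no closed \emph{proper} right ideal. This is already the content of \eqref{app=au}, but I would spell out both inclusions explicitly.

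For the inclusion ``$\subseteq$'', suppose $x\in\AppInv_r(\A)$, so $\overline{x\A}=\A$; if $x$ belonged to some closed proper right ideal $I$, then $\A=\overline{x\A}\subseteq I\subsetneq\A$, a contradiction, so $x$ lies outside the union. For ``$\supseteq$'', suppose $x$ lies in no closed proper right ideal; since $\overline{x\A}$ is a closed right ideal containing $x$, it cannot be proper, hence $\overline{x\A}=\A$ and $x\in\AppInv_r(\A)$ by Theorem~\ref{thm:dense_ideals_implies_ainv}. I expect no serious obstacle; the one delicate point is that $x$ itself lies in $\overline{x\A}$, which may fail in a general non-unital algebra and is exactly where the hypothesis of an approximate identity enters, via Proposition~\ref{prop:x-oveAx}.
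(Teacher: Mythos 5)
Your proof is correct and follows essentially the same route as the paper: both directions are reduced to Theorem~\ref{thm:dense_ideals_implies_ainv} (density of $x\A$), with Proposition~\ref{prop:x-oveAx} supplying the key fact $x\in\overline{x\A}$ in the reverse inclusion. Your explicit framing of $\overline{x\A}$ as the smallest closed right ideal containing $x$ is a clean packaging of what the paper does implicitly, but it is not a different argument.
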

	\begin{proof}
		Suppose that $x\in\AppInv_{r}(\A)$ and assume there exists $I_{x}\in \mathcal{I}_r(\A)$ such that $x\in I_{x}$. Thus, $x\A\subset I_{x}$ and, in consequence, $\A=I_{x}$ because  $x\A$ is dense in $\A$ by Theorem \ref{thm:dense_ideals_implies_ainv}, which yields a contradiction. Then $\displaystyle x\in \A\setminus \bigcup_{I\in\mathcal{I}_r(\A)}\hspace{-5pt}I$.

		On the other hand, let $\displaystyle x\in\A\setminus \bigcup_{I\in\mathcal{I}_r(\A)}\hspace{-5pt} I$  and assume that $x\notin\AppInv_{r}(\A)$. Therefore, $\overline{x\A}\in\mathcal{I}_{r}(\A)$  since $x\A$ is not dense in $\A$ by Theorem \ref{thm:dense_ideals_implies_ainv}. Moreover, 
		$x\in\overline{x\A}$ by Proposition \ref{prop:x-oveAx}. This contradicts our assumption, and hence  $x\in\AppInv_{r}(\A)$, i.e., 
		\eqref{app=au} holds.
	\end{proof}

	In a unital normed algebra $\A$ every proper left [right, two-sided] ideal is contained in a maximal left [right, two-sided] ideal. It is a crucial tool in studying normed algebras with identity, but it is not valid for algebras without identity. Next we study the regular, or modular ideals in normed algebras without identity.
	For more details see \cite[Section 1.1]{Larsen}.
	
	\begin{definition}
		Let $\A$ be an algebra. A left [right; two-sided] 
		ideal $I$ is said to be \emph{modular}, if there exists some $v\in\A$ such that $xv-x\in\,I$ [$vx-x\in\,I$; $xv-x\in\,I$ and $vx-x\in\,I$] 
		for all $x\in\A$. The element $v$ is called a left [right; two-sided] \emph{identity modulo} $I$.
	\end{definition}
	
	\begin{proposition}\label{prop:ainv_modid}
		Let $\A$ be a non-unital normed algebra.
		If $x\in\AppInv_r(\A)$,
		then there is no maximal modular right ideal $J$ such that $x\in\,J$.  
	\end{proposition}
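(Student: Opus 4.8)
The plan is to argue by contradiction, so suppose there is a maximal modular right ideal $J$ with $x\in J$, and let $v\in\A$ be a right identity modulo $J$, i.e.\ $vy-y\in J$ for every $y\in\A$. Since $J$ is maximal it is in particular proper, and the modular unit already witnesses this: were $v\in J$, then for every $y\in\A$ we could write $y=vy-(vy-y)$ with $vy\in J\A\subseteq J$ and $vy-y\in J$, forcing $J=\A$; hence $v\notin J$. I also record at the outset that the hypothesis $x\in\AppInv_r(\A)$ automatically supplies an approximate identity for $\A$, namely the net $(xr_{\alpha})_{\alpha}$ from Definition~\ref{def:ApproInv}, which is what lets the earlier results be invoked.

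The mechanism of the contradiction is then short. Because $x\in J$ and $J$ is a right ideal, each element $e_{\alpha}:=xr_{\alpha}$ of this approximate identity lies in $J$, and therefore so does $e_{\alpha}v\in J\A\subseteq J$. On the other hand, $(e_{\alpha})_{\alpha}$ being an approximate identity gives $\lim_{\alpha}e_{\alpha}v=v$. Thus $v$ is a limit of elements of $J$, that is $v\in\overline{J}$, equivalently $\operatorname{dist}(v,J)=0$. This is precisely what has to be excluded.

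The main obstacle is therefore the classical distance estimate for proper modular ideals: if $J$ is a proper modular right ideal with modular unit $v$, then $\operatorname{dist}(v,J)\ge 1$, and in particular $v\notin\overline{J}$. I would prove this by the standard Neumann-series argument: assuming $\|v-a\|<1$ for some $a\in J$, the element $w:=v-a$ is again a right identity modulo $J$ (since $ay\in J$ for all $y$), the series $s:=\sum_{n\ge 1}w^{n}$ converges, and the identity $s-ws=w$ together with the modular relation $ws-s\in J$ forces $w\in J$, whence $v=w+a\in J$, contradicting $v\notin J$. Feeding $\operatorname{dist}(v,J)\ge 1$ into the previous paragraph contradicts $\operatorname{dist}(v,J)=0$ and completes the proof. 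Two points deserve flagging: this estimate is exactly where completeness is used, so in the purely normed (non-complete) setting one should pass to the completion or cite the corresponding statement from~\cite{Larsen,Palmer}; and the same conclusion can be repackaged by observing that a maximal modular right ideal is closed and proper, hence lies in $\mathcal{I}_r(\A)$, so that Theorem~\ref{conj-app=A/I} immediately yields $x\notin J$.
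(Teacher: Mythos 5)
Your argument is correct, and it reaches the conclusion by a genuinely different, more self-contained route than the paper. The paper's proof takes two lines: $x\in J$ gives $x\A\subseteq J$; Theorem~\ref{thm:dense_ideals_implies_ainv} makes $x\A$ dense in $\A$ (using, as you note, the approximate identity $(xr_{\alpha})_{\alpha}$ supplied by the hypothesis itself); and since maximal modular right ideals are closed (cited from \cite{Larsen}), $J\supseteq\overline{x\A}=\A$, contradicting maximality. You never route through the principal ideal $x\A$: instead you push the approximate identity into $J$, multiply by the modular unit $v$ to get $v\in\overline{J}$, and then prove the distance estimate $\operatorname{dist}(v,J)\ge 1$ from scratch via the Neumann series. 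Since that estimate is exactly the standard lemma behind the closedness fact the paper cites, the two proofs ultimately rest on the same classical ingredient; yours unpacks it, gaining self-containedness and making visible where completeness enters, while the paper's citation keeps the proof short. Your completeness caveat is apt and applies equally to the paper: the statement is phrased for normed algebras, but both the closedness of maximal modular ideals and your series argument genuinely require completeness (in an incomplete normed algebra a maximal modular ideal can be dense, e.g.\ the kernel of a discontinuous character), so citing the Banach-algebra statement from \cite{Larsen} or \cite{Palmer} is the honest resolution; be aware, though, that your alternative of passing to the completion does not straightforwardly repair this, since the sum of the series then lives in the completion and the argument only places $v$ in the closure of $J$ taken there, which is no contradiction when $J$ is dense in $\A$. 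Finally, your closing repackaging via Theorem~\ref{conj-app=A/I} is legitimate (that theorem precedes this proposition in the paper, so there is no circularity) and is essentially the paper's own argument in disguise.
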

	
	\begin{proof}
		If there is a maximal modular right ideal $J\subsetneq\A$
		such that $x\in\,J$, then $x\A\subset\,J$.
		However, every maximal modular right ideal is closed,
		see \cite{Larsen}, then by Theorem \ref{thm:dense_ideals_implies_ainv} we have $\A=\overline{x\A}\subset\,J$.
		Thus, $J=\A$, which is a contradiction with the maximality of $J$.
		Hence, such a maximal modular right ideal does not exist.  
	\end{proof}

	Given a proper closed right  ideal $I$ in $\A$,
	we denote by $\Hull_r(I,\A)$ the set of all
	maximal right modular ideals containing $I$.   The converse implication of Proposition \ref{prop:ainv_modid} follows easily in normed algebras with ``rich'' sets of maximal right modular ideals, i.e., $\Hull_{r}(I,\A)\neq\emptyset$. In that case, the approximate right invertibility may be described as follows.

	\begin{proposition}\label{prop:rich_modular_ideals}
		Let $\A$ be a non-unital normed algebra $\A$ with an approximate identity such that $\Hull_r(I,\A)\ne\emptyset$
		for every proper closed right  ideal $I$ in $\A$.
		Then the following conditions are equivalent:
		\begin{enumerate}
			\item $x\in\AppInv_r(\A)$,
			\item $x\A$ is dense in $\A$,
			\item for every maximal right modular ideal $J$ in $\A$ we have
			$x\notin J$.
		\end{enumerate}
	\end{proposition}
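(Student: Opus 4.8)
The plan is to organize the three conditions into a short cycle of implications, relying on results already established for algebras carrying an approximate identity. First I would dispose of the equivalence $(1)\Leftrightarrow(2)$ at once: Theorem~\ref{thm:dense_ideals_implies_ainv} asserts that, in a normed algebra with an approximate identity, density of $x\A$ in $\A$ is \emph{precisely} equivalent to $x\in\AppInv_r(\A)$. So these two statements are interchangeable and need no separate argument; they will serve as the two ``poles'' of the cycle.

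Next I would establish $(1)\Rightarrow(3)$, which is exactly the content of Proposition~\ref{prop:ainv_modid}: if $x$ is approximately right invertible, then no maximal modular right ideal can contain $x$, i.e.\ $x\notin J$ for every maximal right modular ideal $J$. This direction is therefore already in hand and requires only a citation.

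The one substantive step is the closing implication $(3)\Rightarrow(2)$ (equivalently $(3)\Rightarrow(1)$), and this is where the standing hypothesis $\Hull_r(I,\A)\ne\emptyset$ is genuinely used. I would argue by contraposition: suppose $x\A$ is \emph{not} dense in $\A$. Then its closure $\overline{x\A}$ is a proper closed right ideal of $\A$. By the richness assumption, $\Hull_r(\overline{x\A},\A)\ne\emptyset$, so there exists a maximal right modular ideal $J$ with $\overline{x\A}\subseteq J$. Since $x\in\overline{x\A}$ by Proposition~\ref{prop:x-oveAx}, we obtain $x\in J$, and hence $(3)$ fails. Contrapositively, $(3)$ forces $x\A$ to be dense, which is $(2)$, and the cycle closes.

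The point I would flag is that the richness hypothesis $\Hull_r(I,\A)\ne\emptyset$ is indispensable in exactly this last step and nowhere else. Without it, a proper closed right ideal such as $\overline{x\A}$ might be contained in no maximal modular right ideal at all, so failure of $(2)$ would not be able to produce the offending ideal $J$ witnessing the failure of $(3)$. The remaining implications are purely mechanical consequences of Theorem~\ref{thm:dense_ideals_implies_ainv} and Propositions~\ref{prop:ainv_modid} and~\ref{prop:x-oveAx}, so once the three implications are arranged as a cycle the proof is very short.
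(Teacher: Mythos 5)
Your proposal is correct and follows essentially the same route as the paper: the equivalence (i)$\Leftrightarrow$(ii) via Theorem~\ref{thm:dense_ideals_implies_ainv}, the implication to (iii) via Proposition~\ref{prop:ainv_modid}, and the closing step by contraposition, taking $J\in\Hull_r(\overline{x\A},\A)$ (nonempty by the richness hypothesis) and invoking $x\in\overline{x\A}$ from Proposition~\ref{prop:x-oveAx}. Your remark that the hypothesis $\Hull_r(I,\A)\ne\emptyset$ is used only in this last step is accurate and consistent with the paper's argument.
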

	\begin{proof}The equivalence (i)$\Leftrightarrow$(ii) and the implication (ii)$\Rightarrow$(iii) were proven before in Theorem \ref{thm:dense_ideals_implies_ainv} and
		Proposition \ref{prop:ainv_modid}, respectively.
		
		Suppose that (ii) is not true. Then $\overline{x\A}$ is a proper closed right  ideal of $\A$. Let $J\in \Hull_r(\overline{x\A},\A)$,  then $x\in\overline{x\A}\subset J$ by Proposition \ref{prop:x-oveAx}.
	\end{proof}
	
	If $\A$ is a radical algebra,
	then the set $\Hull_{r}(I,\A)$ is empty. For the group algebra $\Lone(G)$ with a non-discrete locally compact abelian topological group $G$ and $C_{0}(X)$ with a locally compact Hausdorff topological space $X$, the set $\Hull_{r}(I,\A)$ is nonempty, see \cite[\textsection 8.7]{Larsen}.
	More general examples include the regular semisimple tauberian abelian Banach algebras, see~\cite[Lemma 5.1.9]{Kaniuth}, \cite[\textsection 8.7]{Larsen}.

	\subsection{Elementary properties}

	This section summarizes some elementary properties of approximate invertibility. The first results connect approximate invertibility with convergence of the corresponding net and construction of approximate identities.
	
	\begin{proposition}\label{prop:AppInv->divergent_net}
		Let $\A$ be a non-unital normed algebra. If $x\in\AppInv_r(\A)$, then the corresponding net $(r_j)_{j\in J}$ in $\A$ is not convergent.
	\end{proposition}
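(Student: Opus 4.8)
The plan is to argue by contradiction, turning convergence of the witnessing net $(r_j)_{j\in J}$ into the existence of a unit via Proposition~\ref{prop:convergentAI}, which is exactly what non-unitality forbids. So I would suppose, contrary to the claim, that the net $(r_j)_{j\in J}$ associated with the approximate right invertibility of $x$ converges to some limit $r\in\A$.

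The first step is to propagate this convergence through left multiplication by $x$. In a normed algebra submultiplicativity of the norm gives $\|xr_j - xr\| = \|x(r_j-r)\| \le \|x\|\,\|r_j - r\|$, so $r_j\to r$ immediately forces $xr_j\to xr$. Consequently the net $(xr_j)_{j\in J}$, which by the definition of $x\in\AppInv_r(\A)$ is an approximate identity in $\A$, actually possesses a limit, namely $xr$.

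The second step is to invoke Proposition~\ref{prop:convergentAI}, according to which an approximate identity that converges has its limit equal to the unit of $\A$. Applying it to the approximate identity $(xr_j)_{j\in J}$ yields that $xr$ is a unit in $\A$. This contradicts the standing hypothesis that $\A$ is non-unital, and therefore the net $(r_j)_{j\in J}$ cannot be convergent.

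The argument is very short, and I do not expect a genuine obstacle: the only point needing care is the passage $r_j\to r \Rightarrow xr_j\to xr$, which is immediate from the submultiplicative norm (separate continuity of multiplication suffices). All of the conceptual weight is carried by Proposition~\ref{prop:convergentAI}, which packages the statement that a convergent approximate identity manufactures a unit; once that is in hand, non-unitality does the rest.
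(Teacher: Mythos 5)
Your proposal is correct and follows essentially the same route as the paper's own proof: assume $r_j\to r$, use continuity of multiplication to get $xr_j\to xr$, and then apply Proposition~\ref{prop:convergentAI} to conclude that $xr$ would be a unit, contradicting non-unitality. Your remark that submultiplicativity of the norm (indeed, separate continuity of multiplication) suffices is a fair, slightly more explicit justification of the step the paper attributes to continuity of multiplication.
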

	
	\begin{proof}
		Suppose that $(r_j)_{j\in J}$ is convergent in $\A$, i.e., there exists an element $r\in\A$ such that $r_j\to r$ in $\A$. Since $x\neq 0$, then the continuity of multiplication yields that $xr_j\to xr$. Moreover, $x\in\A$ is approximately right invertible, which means that the net $(xr_j)_{j\in J}$ is an approximate identity in $\A$ with the limit $xr$. Thus, Proposition~\ref{prop:convergentAI} states that $\A$ is unital, which is a contradiction.
	\end{proof}

	\begin{proposition}\label{Proposition:net}
		Let $\A$ be a normed algebra and $x\in\A$ be boundedly approximately right and left invertible. Then there exists a net $(w_k)_{k\in K}$ such that the net $(xw_{k}x)_{k\in\,K}$ is a bounded approximate identity in $\A$. 
	\end{proposition}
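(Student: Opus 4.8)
The plan is to construct the net $(w_k)$ by combining the two one-sided witnesses of bounded invertibility. By hypothesis there is a net $(r_j)_{j\in J}$ with $(xr_j)_{j\in J}$ a norm bounded approximate identity, say $\|xr_j\|\le M_r$ for all $j$, and a net $(l_i)_{i\in I}$ with $(l_i x)_{i\in I}$ a norm bounded approximate identity, say $\|l_i x\|\le M_l$ for all $i$. I would take $K=I\times J$ with the coordinatewise order (which is again directed) and set $w_{(i,j)}:=r_j l_i$, so that
\[
x\,w_{(i,j)}\,x = (xr_j)(l_i x).
\]
Thus each term of the candidate net is a product of one term from each given approximate identity, and the whole problem reduces to understanding products of approximate identities.

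First I would record norm boundedness: by submultiplicativity $\|x w_{(i,j)} x\| = \|(xr_j)(l_i x)\| \le \|xr_j\|\,\|l_i x\| \le M_r M_l$ uniformly in $(i,j)$, so the candidate net is bounded by $M_r M_l$. It then remains to check the two approximate-identity limits, and this is where both constants are used. For the left limit I would fix $z\in\A$ and split
\[
\|(xr_j)(l_i x)z - z\| \le \big\|(xr_j)\big[(l_i x)z - z\big]\big\| + \|(xr_j)z - z\| \le M_r\,\|(l_i x)z - z\| + \|(xr_j)z - z\|.
\]
The second summand tends to $0$ along $j$, uniformly in $i$, while the first is bounded uniformly in $j$ by the single constant $M_r$ times $\|(l_i x)z - z\|\to0$ along $i$. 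Given $\eps>0$ one then picks $j_0$ with the second summand $<\eps/2$ for $j\ge j_0$ and $i_0$ with $M_r\|(l_i x)z-z\|<\eps/2$ for $i\ge i_0$, so that the total is $<\eps$ once $(i,j)\ge(i_0,j_0)$ in the product order. The right limit is symmetric, via
\[
\|z(xr_j)(l_i x) - z\| \le \big\|\big[z(xr_j) - z\big](l_i x)\big\| + \|z(l_i x) - z\| \le M_l\,\|z(xr_j)-z\| + \|z(l_i x)-z\|,
\]
with the roles of $M_r$ and $M_l$ interchanged.

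The delicate step --- and the reason the hypothesis demands \emph{bounded} invertibility rather than mere approximate invertibility --- is exactly this uniform control of the cross term. To upgrade the two iterated convergences into a single limit over the product directed set $K$, the inner convergence $(l_i x)z\to z$ must be amplified only by a factor that stays bounded as the outer index $j$ varies, and $\sup_j\|xr_j\|\le M_r$ guarantees precisely that; symmetrically $\sup_i\|l_i x\|\le M_l$ handles the other side. Without norm boundedness these factors could blow up and the product net need not be an approximate identity. Once both limits are in hand, $(x w_k x)_{k\in K}$ is by definition a norm bounded approximate identity, which proves the claim.
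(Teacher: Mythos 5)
Your proof is correct and follows essentially the same route as the paper: the paper also sets $w_{(i,j)}=r_j l_i$ over the product directed set $I\times J$, but simply cites a known result (that the product net of two bounded approximate identities is again a bounded approximate identity) where you prove that fact inline. Your explicit $\eps/2$-estimate with the uniform bounds $M_r$, $M_l$ controlling the cross terms is exactly the content of the cited corollary, so your argument is a self-contained version of the paper's proof.
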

	
	\begin{proof}
		Let $(l_i)_{i\in I}$ and $(r_j)_{j\in J}$ be the nets
		such that $(l_i x)_{i\in I}$ and $(x r_j)_{j\in J}$
		are bounded approximate identities in $\A$. By~\cite[Corollary 7, Chapter 6]{Feichtinger}  
		the net $(xr_{j}l_{i}x)_{(i,j)\in I\times J}$ is a bounded approximate identity in $\A$. Now it is enough to put $w_{(i,j)}=r_jl_i$ for $(i,j)\in I\times J$.		  
	\end{proof}
	
	\begin{remark} 
		Moreover, if the assumption of the latter proposition is fulfilled, then
		for each $z\in\A$ it holds \begin{equation}\label{limu_pv_q}\lim_{(i,j)\in I\times J} l_ixzxr_j = z,
		\end{equation} where $(l_i)_{i\in\,I}$ and $(r_j)_{j\in\,J}$ are the corresponding nets from the proof of Proposition~\ref{Proposition:net}. The proof of~(\ref{limu_pv_q}) is based on the inequality $\|l_ixzxr_j-z\|\leq \|l_ixz-z\|\cdot\|xr_j\|+\|zxr_j-z\|.$ 
		By \cite[Proposition 2.6]{Doran-Wichmann} the net $(l_{i}x+xr_{j}-xr_{j}l_{i}x)_{(i,j)\in I\times J}$ is a bounded approximate identity in $\A$.
	\end{remark}
	
	The following result describes a connection of approximate invertibility with multiplication operator. 
	
	\begin{lemma}\label{left-multi}
		Let $\A$ be a normed algebra. If $x\in\A$ is boundedly or op-boundedly approximately left invertible in $\A$,  then the left 
		multiplication operator $L_x\colon \A\to\A$ given by $L_{x}(y)=xy$ for $y\in\A$, is bounded.
	\end{lemma}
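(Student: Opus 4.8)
The plan is to reduce the assertion to a single uniform estimate of the form $\|xy\|\le C\|y\|$ for all $y\in\A$, and to read off the constant $C$ from the net supplied by the approximate left invertibility of $x$. Write $e_j:=l_jx$ for the net from Definition~\ref{def:ApproInv}, so that $(e_j)_{j\in J}$ is an approximate identity; in particular $e_jz\to z$ and $ze_j\to z$ for every $z\in\A$, and associativity of the product gives the factorisation $L_{e_j}=L_{l_j}L_x$ of the left multiplication operators. The two hypotheses enter precisely through the quantity assumed to be uniformly bounded: in the op-bounded case $M:=\sup_{j}\|L_{e_j}\|_{\mathrm{op}}<\infty$ by Definition~\ref{Defin:normb-opb-app-i}, whereas in the norm-bounded case $\sup_j\|e_j\|\le M<\infty$, which together with submultiplicativity of the algebra norm again yields $\|L_{e_j}\|_{\mathrm{op}}\le M$. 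Thus in both situations I would start from a net $(L_{e_j})_{j\in J}$ of \emph{bounded} operators that is uniformly bounded by $M$ and converges strongly to the identity of $\A$.

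The engine of the proof is the elementary principle that the pointwise (strong) limit of a uniformly bounded net of operators is again bounded, with operator norm at most the uniform bound: if $S_j\to S$ pointwise and $\sup_j\|S_j\|\le M$, then $\|Sy\|=\lim_j\|S_jy\|\le M\|y\|$ for every $y$. To put $L_x$ into this form I would use the approximate-identity relation on the element $xy$, namely $e_j(xy)=(e_jx)y\to xy$, which says that the net $L_{e_j}L_x=L_{e_jx}$ converges strongly to $L_x$. If this converging net can be seen to be uniformly bounded by a constant that does \emph{not} secretly involve $L_x$, then the principle above delivers at once that $L_x$ is bounded.

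The main obstacle is exactly this last point, the uniform bound on the converging net. The factorisation only exhibits the composites $L_{e_j}=L_{l_j}L_x$ and $L_{e_jx}=L_{e_j}L_x$ as controlled, and the naive estimate $\|e_j(xy)\|\le\|L_{e_j}\|_{\mathrm{op}}\,\|xy\|$ merely returns the vacuous inequality $\|xy\|\le M\|xy\|$, because $e_j$ multiplies $xy$ rather than $y$. The decisive step is therefore to insert the approximate identity on the correct side and to regroup the product so that the operator surviving in front of $y$ is visibly one of the norm-$\le M$ maps $L_{e_j}$, thereby separating $L_x$ from the bound; getting this bookkeeping right, rather than any analytic subtlety, is where the work lies. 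I note finally that in the norm-bounded case the difficulty evaporates, since submultiplicativity of the algebra norm gives $\|xy\|\le\|x\|\,\|y\|$ outright, so that the genuinely new content of the lemma is the op-bounded assertion, for which the strong-limit principle is the natural tool.
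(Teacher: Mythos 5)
Your outline stops exactly where the proof has to happen: in the op-bounded case you name the ``decisive step'' --- insert the approximate identity on the other side and regroup --- but never perform it, and that step is the entire content. Performed, it is a one-line swap of the order of composition, and it coincides with the paper's computation. Instead of the net $L_{e_j}L_x$ (which, as you correctly observe, only returns the vacuous $\|xy\|\le M\|xy\|$), take $S_j\eqdef L_xL_{e_j}$, that is, $S_jy=x(e_jy)$ with $e_j=l_jx$. In either case of the hypothesis one has $\|e_jy\|\le M\|y\|$ (directly from $\|L_{e_j}\|_{\mathrm{op}}\le M$ in the op-bounded case, from $\|e_j\|\le M$ and submultiplicativity in the norm-bounded case), whence
\[
\|S_jy\|=\|x\,(e_jy)\|\le\|x\|\,\|e_jy\|\le M\,\|x\|\,\|y\|,
\]
and since $e_jy\to y$ and $\|x(e_jy)-xy\|\le\|x\|\,\|e_jy-y\|$, the net $(S_j)_{j\in J}$ converges strongly to $L_x$. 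Your strong-limit principle then yields $\|L_x\|_{\mathrm{op}}\le M\|x\|$. This is, almost word for word, the paper's proof: it estimates $\|L_x(l_jxy)\|=\|xT_jy\|\le\|x\|\,\|T_j\|_{\mathrm{op}}\,\|y\|\le M\|x\|\,\|y\|$ and then passes to the limit, arriving at the constant $K=M\|x\|$ in both cases. So the operator that must sit in front of $y$ is $L_{e_j}$, with the leftover factor $x$ handled elementwise --- exactly the bookkeeping you anticipated but left undone.

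Notice, however, what the regrouped estimate consumes: both the bound $\|x(e_jy)\|\le\|x\|\,\|e_jy\|$ and the limit passage invoke the elementwise inequality $\|xw\|\le\|x\|\,\|w\|$, which \emph{is} the assertion that $L_x$ is bounded, and with the better constant $\|x\|$ in place of $M\|x\|$. Your closing remark that submultiplicativity gives $\|xy\|\le\|x\|\,\|y\|$ outright therefore settles not only the norm-bounded case but the op-bounded one as well, since that inequality makes no reference to any hypothesis on $x$; the ``main obstacle'' you describe is a phantom, and the strong-limit detour is unnecessary under the axioms you grant yourself. The same observation applies to the paper's own proof, which uses submultiplicativity at every step. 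The lemma would acquire genuine content only in an algebra whose norm is not assumed submultiplicative, and there your outline and the paper's argument would break at the identical point: the outer multiplication by $x$, both in the uniform estimate and in the continuity needed to pass to the limit.
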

	
	\begin{proof}
		Assume   $x$ is boundedly, but not op-boundedly,  approximately left invertible in $\A$. Then there exists a net $(l_{j})_{j\in J}$ such that $(l_{j}x)_{j\in J}$ is an approximate identity in $\A$ with $\|l_{j}x\|\leq M$ for all $j$ and some $M>0$.   Thus, $\|L_{x}(l_{j}xy)\|\leq M\,\| x\|\,\|y\|$ for every $j$ and every $y\in\A$. Now, by continuity of the product and the norm $\|\cdot\|$ in $\A$ one has $\displaystyle\lim_{j\in J}\|L_{x}(l_{j}xy)\|=\|L_{x}(y)\|$. Hence, $\|L_{x}(y)\|\leq K\|y\|$ for every $y\in\A$ with $K=M\|x\|$.
		
		Now, assume $x$ is op-boundedly, but not boundedly,  approximately left invertible in $\A$. Then there exists a net $(l_{j})_{j\in J}$ such that $(l_{j}x)_{j\in J}$ is an approximate identity in $\A$ with $M=\sup_{j\in J}\|T_{j}\|_{\rm{op}}<\infty$, where $\|T_{j}\|_{\rm{op}}=\sup_{\|y\|\leq 1}\|l_{j}xy\|$ for all $j$.   Thus, for every $j$ and every $y\in\A$ we have, $$\|L_{x}(l_{j}xy)\|=\|xT_{j}y\|\leq \| x\|\,\|T_{j}y\|\leq \|x\|\,\|T_{j}\|_{\rm{op}}\,\|y\|\leq M\,\|x\|\,\|y\|.$$ Now, by continuity of the product and the norm $\|\cdot\|$ in $\A$ one has $\displaystyle\lim_{j\in J}\|L_{x}(l_{j}xy)\|=\|L_{x}(y)\|$. Hence, $\|L_{x}(y)\|\leq K\|y\|$ for every $y\in\A$ with $K=M\|x\|$.
	\end{proof}
	
	Now, we prove that the set of all boundedly approximately invertible elements of an abelian algebra is closed with respect to (finitely many elements) multiplication. 
	
	\begin{proposition}
		\label{prop:bounded_approx_inv_of_product}
		Let $\A$ be an abelian normed algebra, and $x_1, \dots, x_n\in\A$, $n\in\mathbb{N}$. Then $x_1\cdots x_n$ is boundedly approximately invertible in $\A$ if and only if for each $k$ in $\{1,\ldots,n\}$ the element $x_k$ is boundedly approximately invertible in $\A$. 
	\end{proposition}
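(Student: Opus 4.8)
The plan is to exploit commutativity so that the two implications reduce, respectively, to an immediate rearrangement and to the ``product of bounded approximate identities'' principle already invoked in Proposition~\ref{Proposition:net}. Throughout, since $\A$ is abelian, approximate right and left invertibility coincide, and ``boundedly approximately invertible'' means there is a net $(r_j)_{j\in J}$ for which $(xr_j)_{j\in J}$ is a norm bounded approximate identity; I shall write $x = x_1\cdots x_n$.

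First I would prove the implication ``$x$ is boundedly approximately invertible $\Rightarrow$ each $x_k$ is''. Suppose $(r_j)_{j\in J}$ is a net with $(xr_j)_{j\in J}$ a bounded approximate identity, say $\|xr_j\|\le M$. Fix $k\in\{1,\dots,n\}$ and set $r^{(k)}_j := \bigl(\prod_{m\ne k} x_m\bigr) r_j$. By commutativity $x_k r^{(k)}_j = \bigl(\prod_{m} x_m\bigr) r_j = xr_j$, so the net $(x_k r^{(k)}_j)_{j\in J}$ is exactly $(xr_j)_{j\in J}$, a bounded approximate identity; hence $x_k$ is boundedly approximately invertible with the same bound $M$. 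This direction is essentially free once commutativity is used to insert $x_k$ into the product.

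For the converse, for each $k$ choose a net $(r^{(k)}_{j_k})_{j_k\in J_k}$ with $(x_k r^{(k)}_{j_k})_{j_k\in J_k}$ a bounded approximate identity, bounded by $M_k$. On the product directed set $J := J_1\times\cdots\times J_n$ (with the coordinatewise order) put $s_{\mathbf j} := \prod_{k=1}^n r^{(k)}_{j_k}$ for $\mathbf j = (j_1,\dots,j_n)$. Commutativity gives
$$
x\, s_{\mathbf j} = (x_1\cdots x_n)\Bigl(\prod_{k=1}^n r^{(k)}_{j_k}\Bigr) = \prod_{k=1}^n\bigl(x_k r^{(k)}_{j_k}\bigr).
$$
Each factor runs through a bounded approximate identity, so it remains to see that the product over $\mathbf j\in J$ of finitely many bounded approximate identities is again a bounded approximate identity; this is exactly the fact used in Proposition~\ref{Proposition:net} (cf.\ \cite[Corollary 7, Chapter 6]{Feichtinger}). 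Consequently $(x s_{\mathbf j})_{\mathbf j\in J}$ is a bounded approximate identity (with bound $\prod_k M_k$), i.e.\ $x$ is boundedly approximately invertible.

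I expect the only real content to be this last ``product'' principle, and I would include its short justification for completeness. For two factors it follows from the estimate $\|e_i f_j z - z\|\le \|e_i\|\,\|f_j z - z\| + \|e_i z - z\|$, where the boundedness $\|e_i\|\le M_1$ is what forces the first term to vanish along the product net; boundedness of the product and the $n$-fold case then follow by a routine induction, and this is precisely the point at which the hypothesis of \emph{bounded} approximate invertibility (rather than mere approximate invertibility) is used. Everything else is bookkeeping enabled by commutativity.
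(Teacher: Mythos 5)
Your proof is correct. The forward direction coincides with the paper's own argument: there, too, the other factors are absorbed into the net (the paper sets $w_i = x_2\cdots x_n u_i$ and observes $(x_1w_i)_{i\in I}$ is the given bounded approximate identity), so this half is identical. In the converse direction you take a genuinely different route. The paper reduces to $n=2$ plus induction and argues via the bilinear map $B_{x_1}(x,y)=x_1yx$, whose boundedness it extracts from Lemma~\ref{left-multi}, and then invokes an iterated-limit result for bounded bilinear maps \cite[Proposition 10]{Feichtinger} to conclude $\lim_{(i,j)\in I\times J} x_1x_2u_iv_jz = z$. You instead work directly on the product directed set $J_1\times\cdots\times J_n$ and invoke (and re-prove) the principle that a product of finitely many bounded approximate identities is again a bounded approximate identity --- the same fact the paper cites from \cite[Corollary 7, Chapter 6]{Feichtinger} in Proposition~\ref{Proposition:net} --- via the elementary estimate $\|e_if_jz-z\|\le\|e_i\|\,\|f_jz-z\|+\|e_iz-z\|$, which indeed converges along the product net precisely because $\|e_i\|\le M_1$. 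Your version buys self-containedness (no appeal to Lemma~\ref{left-multi} or the bilinear-map machinery), treats all $n$ factors in one stroke, and makes explicit both the resulting bound $\prod_k M_k$ and the exact point where bounded (rather than mere) approximate invertibility is used; note that the paper's displayed conclusion only asserts that $x_1x_2$ is approximately invertible, whereas the boundedness $\|x_1x_2u_iv_j\|\le KM$, immediate in both arguments, is what the statement actually requires, so your proof is if anything slightly more careful on this point. The paper's route, for its part, showcases the bilinear/iterated-limit technique it reuses elsewhere. Both arguments place the boundedness hypothesis in the same essential role, and both are valid.
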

	
	\begin{proof}
		Let $x_1\cdots x_n$ be boundedly approximately invertible in $\A$. Then there exists a net $(u_i)_{i\in I}$ in $\A$ such that $(x_1\cdots x_nu_i)_{i\in I}$ is a bounded approximate identity in $\A$, i.e., there is a constant $K>0$ such that $\|x_1\cdots x_n u_i\|\leq K$ for each $i\in I$, and for each $z\in\A$ it holds $x_1\cdots x_n u_i z \to z$. By  the commutativity, it is sufficient to show that $x_1$ is boundedly approximately invertible in $\A$. For each $i$ in $I$ we put $w_i = x_2\cdots x_n u_i$. Clearly, $\|x_1w_i\|\leq K$ for each $i\in I$, and for each $z\in\A$ the net $(x_1w_iz)_{i\in I}$ converges to $z$, i.e., $(x_1w_i)_{i\in I}$ is a bounded approximate identity in $\A$. Thus, $x_1$ is boundedly approximately invertible in $\A$.

		In the converse direction, it is sufficient to treat the case $n=2$ only, the rest will proceed by induction. Let $x_{1}$ and $x_{2}$ be boundedly approximately invertible in $\A$, i.e., there exist nets $(u_i)_{i\in I}$ and $(v_j)_{j\in J}$ in $\A$ and constants $K,M>0$ such that $\|x_1u_i\|\leq K$ for each $i\in I$ and $\|x_2v_j\|\leq M$ for each $j\in J$. The mapping $B_{x_{1}}(x,y)=x_{1}yx$  is a  bilinear form with $\|B_{x_{1}}\|\leq K \|x_{1}\|$ by Lemma \ref{left-multi}. Now, by \cite[Proposition 10]{Feichtinger} for any $z\in\A$ we have 
		\[\lim_{(i,j)\in I\times J}x_{1}x_{2}u_{i}v_{j}z=\lim_{i\in I}\lim_{j\in J}B_{x_{1}}(u_{i}z,v_{j}x_{2})=\lim_{i\in I}\lim_{j\in J}(x_{1}u_{i}z)(x_{2}v_{j})=z,\]
		i.e., $x_{1}x_{2}$ is approximately invertible in $\A$. 
	\end{proof}
	
	An analogue to Proposition~\ref{prop:bounded_approx_inv_of_product} also holds for op-boundedly approximately invertible elements.
	
	\begin{proposition}
		\label{prop:opbounded_approx_inv_of_product}
		Let $\A$ be an abelian normed algebra, and $x_1,\dots,x_n\in\A$, $n\in\bN$.
		Then $x_1\cdots x_n$ is op-boundedly approximately invertible in $\A$ if and only if
		for each $k$ in $\{1,\ldots,n\}$ the element $x_k$ is op-boundedly approximately invertible in $\A$.
	\end{proposition}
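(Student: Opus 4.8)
The plan is to follow the same two–part structure as the proof of Proposition~\ref{prop:bounded_approx_inv_of_product}, replacing every norm estimate by the corresponding operator–norm estimate and exploiting that in an abelian algebra left multiplication is multiplicative, i.e.\ $L_{ab}=L_aL_b$, so that $\|L_{ab}\|_{\textrm{op}}\le\|L_a\|_{\textrm{op}}\,\|L_b\|_{\textrm{op}}$. By induction it again suffices to treat the case $n=2$, and by commutativity the roles of the two factors are symmetric.

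For the forward direction I would suppose $x_1\cdots x_n$ is op-boundedly approximately invertible, witnessed by a net $(u_i)_{i\in I}$ for which $(x_1\cdots x_n u_i)_{i\in I}$ is an approximate identity with $\sup_i\|L_{x_1\cdots x_n u_i}\|_{\textrm{op}}<\infty$. To see that a fixed factor $x_k$ is op-boundedly approximately invertible, I set $w_i=\bigl(\prod_{\ell\ne k}x_\ell\bigr)u_i$; commutativity gives $x_k w_i=x_1\cdots x_n u_i$, so the net $(x_k w_i)_{i\in I}$ is literally the witnessing net. Hence both the approximate-identity property and the operator-norm bound transfer verbatim, and $x_k$ is op-boundedly approximately invertible in $\A$. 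This step is essentially a relabeling and carries no real difficulty.

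For the converse I would assume $x_1$ and $x_2$ are op-boundedly approximately invertible via nets $(u_i)_{i\in I}$ and $(v_j)_{j\in J}$, and write $e_i=x_1u_i$, $f_j=x_2v_j$, so that $(e_i)_{i\in I}$ and $(f_j)_{j\in J}$ are approximate identities with $\sup_i\|L_{e_i}\|_{\textrm{op}}=K<\infty$ and $\sup_j\|L_{f_j}\|_{\textrm{op}}=M<\infty$. I then consider the net $(x_1x_2u_iv_j)_{(i,j)\in I\times J}$, noting $x_1x_2u_iv_j=e_if_j$ by commutativity. The operator-norm bound is now immediate: since $L_{e_if_j}=L_{e_i}L_{f_j}$, submultiplicativity of the operator norm under composition gives $\|L_{e_if_j}\|_{\textrm{op}}\le KM$ uniformly in $(i,j)$. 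This is precisely where the op-bounded case is cleaner than Proposition~\ref{prop:bounded_approx_inv_of_product}: the bilinear-form estimate is replaced by composition of multiplication operators. To see that $(e_if_j)$ is an approximate identity over the product directed set, I would use the bound $\|e_if_jz-z\|\le\|L_{e_i}\|_{\textrm{op}}\,\|f_jz-z\|+\|e_iz-z\|\le K\|f_jz-z\|+\|e_iz-z\|$; given $z$ and $\eps>0$, choosing $j$ large makes the first term small, and then $i$ large makes the second small.

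The one genuinely load-bearing point—and the step I would single out as the crux—is that the operator-norm bound $K$ on $(e_i)$ is exactly what lets the mixed term $\|e_i(f_jz-z)\|$ be controlled uniformly in $i$; without op-boundedness of at least one factor the product net need not be an approximate identity at all. Everything else, namely the passage from $n=2$ to general $n$ by induction and the identifications in both directions, is routine. The boundedness of the relevant multiplication operators, needed to speak of operator norms, is guaranteed here either by the standing hypothesis or, if one prefers, by Lemma~\ref{left-multi}.
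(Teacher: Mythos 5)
Your proof is correct and takes essentially the same route as the paper: the forward direction is the same relabeling via $w_i=\bigl(\prod_{\ell\ne k}x_\ell\bigr)u_i$, and the converse hinges on exactly the identity $L_{x_1x_2u_iv_j}=L_{x_1u_i}L_{x_2v_j}$ together with submultiplicativity of the operator norm, which is precisely the paper's stated key step. The only cosmetic difference is that where the paper's template (the bounded case, Proposition~\ref{prop:bounded_approx_inv_of_product}) cites an iterated-limit result for bounded bilinear maps, you verify the approximate-identity property of $(e_if_j)_{(i,j)\in I\times J}$ directly through the estimate $\|e_if_jz-z\|\le K\,\|f_jz-z\|+\|e_iz-z\|$, a valid and self-contained substitute.
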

	
	\begin{proof}
		The proof is almost the same as the proof of Proposition~\ref{prop:bounded_approx_inv_of_product}.
		In the second part,
		we suppose that $\|L_{x_1 u_i}\|_{\text{op}}\le K$
		for each $i$ in $I$
		and $\|L_{x_2 v_j}\|_{\text{op}}\le M$ for each $j$ in $J$.
		Next, we apply the upper bound
		$\|L_{x_1 x_2 u_i v_j}\|_{\text{op}}
		=\|L_{x_1 u_i} L_{x_2 v_j}\|_{\text{op}}
		\le K M$.
	\end{proof}

	\begin{proposition}\label{prop:right_zero_divisor_is_not_appinvr}
		Let $\A$ be a non-unital normed algebra.
		If $x\in\AppInv_r(\A)$,
		then $x$ is not a right zero divisor in $\A$.
	\end{proposition}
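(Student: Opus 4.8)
The plan is to argue by contradiction, exploiting the fact that approximate right invertibility of $x$ furnishes a net $(x r_j)_{j\in J}$ that is an approximate identity, and in particular acts correctly when multiplied onto any fixed element of $\A$. Recall that $x$ being a right zero divisor means there is a nonzero $y\in\A$ with $yx=0$, that is, $x$ sits on the right of the annihilating product; matching this side with the side of approximate invertibility is the whole point of the statement.

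First I would suppose, towards a contradiction, that $x\in\AppInv_r(\A)$ is simultaneously a right zero divisor, so that there exists $y\in\A$ with $y\neq 0$ and $yx=0$. By Definition~\ref{def:ApproInv} there is then a net $(r_j)_{j\in J}$ in $\A$ such that $(x r_j)_{j\in J}$ is an approximate identity in $\A$.

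Next I would apply the defining property of an approximate identity (Definition~\ref{DefinitionAppId}) to the single element $y$, using the side on which the net multiplies from the right: this gives $\lim_{j\in J} y\,(x r_j) = y$. On the other hand, associativity of the product together with the relation $yx = 0$ yields $y\,(x r_j) = (yx)\,r_j = 0$ for every $j\in J$, so the net is identically zero and hence converges to $0$. Comparing the two evaluations of the same limit forces $y = 0$, contradicting $y\neq 0$.

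There is essentially no analytic obstacle here; the entire content is the bookkeeping remark that the zero-divisor relation $yx=0$ must be paired with the correct one of the two approximate-identity limits, namely $y\,(x r_j)\to y$, so that associativity collapses $y\,(x r_j)$ to $(yx)\,r_j$. Concluding that no right zero divisor can be approximately right invertible then gives the proposition. (Note that the same argument, with $x$ placed on the left and the other approximate-identity limit used, would handle the left-handed counterpart.)
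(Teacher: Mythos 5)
Your proof is correct and is essentially the same as the paper's: both rest on the observation that $y(xr_j)=(yx)r_j=0$ for all $j$, so this net cannot converge to $y\neq 0$. The only cosmetic difference is that you argue by contradiction while the paper states the contrapositive directly.
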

	
	\begin{proof}
		Suppose that $x\ne0$ and there exists $y\in\A$ such that $y\ne0$ and $yx=0$.
		If $(r_j)_{j\in J}$ is a net in $\A$, then $yxr_j=0$ for $j\in J$.
		So, the net $(yxr_j)_{j\in J}$ cannot converge to $y$, and $x\notin\AppInv_r(\A)$.
	\end{proof}

	Finally, we mention a relation with topological divisors of zero. Recall that an element $x$ of an algebra $\A$ is called a \textit{left} (resp. \textit{right}) \textit{topological divisor of zero} if there is a net $(l_j)_{j\in J}$ 
	(resp. $(r_j)_{j\in J}$) in $\A$
	such that the net $(xl_j)_{j\in J}$ (resp. $(r_j x)_{j\in J}$) converges to $0$ whereas neither $(l_j)_{j\in J}$ nor $(r_j)_{j\in J}$ does not converge to $0$.
	
	Different characterizations of topological divisors of zero may be found in~\cite[Theorem~1.6.2]{Larsen}.

	As it is well-known, in unital Banach algebras topological divisors of zero are “fundamentally” non-invertible (i.e., there is no larger normed algebra in which they might become invertible). Recently, Schulz, Brits, and Hasse have shown that in non-unital Banach algebras admitting a two-sided (left, resp. right) not necessarily bounded approximate identity each element is a two-sided (right, resp. left) topological divisor of zero, cf. \cite[Theorem 1.2]{SchulzBritsHasse2017}. Thus, the relationship between approximately invertible elements and topological divisors of zero in non-unital Banach algebras is an easy consequence. However, we provide an alternative (and independent) proof of this result.
	
	\begin{proposition}\label{AppInvR->TdzL}
		Let $\A$ be a non-unital Banach algebra. Then every approximately right invertible element in $\A$ is a left topological divisor of zero.
	\end{proposition}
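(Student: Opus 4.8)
The plan is to argue by contradiction, translating ``left topological divisor of zero'' into a quantitative statement about the left multiplication operator $L_x$ and then showing that its negation forces $\A$ to be unital.

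First I would record the metric reformulation. Since $\A$ is a Banach (hence metric) algebra, $x$ \emph{fails} to be a left topological divisor of zero precisely when $L_x$ is bounded below, i.e.\ when there is a constant $c>0$ with $\|xy\|\ge c\|y\|$ for all $y\in\A$: indeed, if no such $c$ existed one could choose $y_n$ with $\|y_n\|=1$ and $\|xy_n\|<1/n$, and then $(y_n)$ would witness that $x$ is a left topological divisor of zero (the net $(xy_n)$ tends to $0$ while $\|y_n\|=1\not\to0$). So I assume, toward a contradiction, that $L_x$ is bounded below.

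Next I would exploit completeness: a bounded-below operator on a Banach space has closed range, so $x\A=\Rang(L_x)$ is closed. On the other hand, since $x\in\AppInv_r(\A)$ the net $(xr_j)_{j\in J}$ is an approximate identity, so $\A$ possesses an approximate identity and Theorem~\ref{thm:dense_ideals_implies_ainv} applies to give that $x\A$ is dense in $\A$. A closed dense subspace is the whole space, whence $x\A=\A$. Thus $L_x$ is a bounded bijection of the Banach space $\A$ (bounded below gives injectivity, surjectivity gives the rest), and by the open mapping theorem its inverse $S\eqdef L_x^{-1}$ is bounded. The decisive step is then to manufacture an identity. Applying the continuous map $S$ to the relation $xr_jz\to z$ (valid for every $z\in\A$ because $(xr_j)$ is an approximate identity) and using $S\circ L_x=\mathrm{id}$ yields $r_jz\to S(z)$ for every $z$. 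Taking $z=x$ gives $r_jx\to S(x)\eqdef p$. For an arbitrary $z$ one then has, on one hand, $r_jxz=(r_jx)z\to pz$ by continuity of right multiplication, and on the other hand $r_j(xz)\to S(xz)=S(L_xz)=z$; comparing these limits shows $pz=z$, so $p$ is a left identity of $\A$. Finally, since $p$ is a left identity we have $p(xr_j)=xr_j$, while the approximate-identity property gives $p(xr_j)\to p$; hence $xr_j\to p$, i.e.\ the approximate identity converges. By Proposition~\ref{prop:convergentAI} this makes $p$ the unit of $\A$, contradicting non-unitality. Therefore $L_x$ cannot be bounded below, and $x$ is a left topological divisor of zero.

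I expect the main obstacle to be the final paragraph. The functional-analytic inputs (closed range of a bounded-below map, the open mapping theorem) are routine, so the substance lies in extracting the left identity $p$ and closing the loop through Proposition~\ref{prop:convergentAI}. The delicate points are that every limit is taken along the net indexed by $J$, that the passage $r_jz\to S(z)$ genuinely relies on the continuity of the inverse $S$ (and hence on completeness of $\A$), and that the two evaluations of $\lim_{j} r_jxz$ are legitimately compared by uniqueness of limits; once these are handled carefully the contradiction is immediate.
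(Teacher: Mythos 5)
Your proof is correct, but it takes a genuinely different route through the middle of the argument. Both proofs start the same way, converting ``$x$ is not a left topological divisor of zero'' into the bounded-below estimate $\|xy\|\ge c\|y\|$ for all $y\in\A$ (the paper cites Larsen's characterization $\zeta_l(x)>0$ and normalizes to $c=1$), and both end the same way, by exhibiting a limit of the approximate identity $(xr_j)_{j\in J}$ and invoking Proposition~\ref{prop:convergentAI} to contradict non-unitality. The difference is how convergence is produced. The paper works bare-handed: the estimate gives $\|(r_i-r_j)x\|\le\|x(r_i-r_j)x\|\le\|(xr_i)x-x\|+\|(xr_j)x-x\|$, so $(r_jx)_{j\in J}$ is Cauchy and converges to some $u$ by completeness, and then the same estimate, in the form $\|xr_j-u\|\le\|x(xr_j)-xu\|$, upgrades this to $xr_j\to u$. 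You instead promote the estimate to operator-theoretic statements about $L_x$: bounded-belowness gives closed range, Theorem~\ref{thm:dense_ideals_implies_ainv} gives density of $x\A$ (its hypothesis is satisfied since $(xr_j)_{j\in J}$ itself is an approximate identity, as you correctly note), hence $L_x$ is a bijection with continuous inverse $S$, from which $r_jz\to S(z)$ for every $z$, $p=S(x)$ is a left identity, and $xr_j\to p$. Your route costs more machinery --- the closed-range lemma, the density theorem, and the open mapping theorem, though the last is actually dispensable since bounded-belowness yields $\|S\|\le 1/c$ directly --- but it buys more structure: it identifies the limit explicitly as $L_x^{-1}x$, shows that the net $(r_j)_{j\in J}$ converges strongly as left multipliers to $L_x^{-1}$ (strictly more than the paper extracts), and ties the proposition into the paper's ideal-density framework instead of treating it as an isolated estimate. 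Completeness enters each proof at exactly one essential point --- your closed-range step versus the paper's summation of the Cauchy net --- consistent with the Banach hypothesis in the statement, and the delicate points you flagged (limits along the net $J$, continuity of $S$, uniqueness of limits) are all handled correctly.
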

	
	\begin{proof}
		If $x\in\AppInv_r(\A)$, then there is a net $(r_j)_{j\in J}$  in $\A$ such that $(xr_j)_{j\in J}$ is an approximate identity in $\A$. 
		Suppose that $x$ is not a left topological divisor of zero. Then $\zeta_l(x) = \inf\limits_{y\in\mathbb{S}(\A)} \|xy\| > 0$, see  \cite[Theorem 1.6.2, page 46]{Larsen}. Without loss of generality we may assume that $\zeta_l(x)\geq 1$ (replacing $x$ by $x/\zeta_l(x)$), which is equivalent to the assertion that \begin{equation}\label{eq:inequality}
			\|xy\|\geq \|y\|\,\,\, \textrm{for each}\,\, y\in\A.
		\end{equation} Since $(xr_j)_{j\in J}$ is an approximate identity in $\A$ and
		$$
		\|(r_i-r_j)x\| \leq \|x(r_i-r_j)x\|=\|(xr_ix-x) - (xr_jx-x)\| \leq \|(xr_i)x-x\| + \|(xr_j)x-x\|,
		$$
		the net $(r_jx)_{j\in J}$ is Cauchy in $\A$.
		From the completeness of $\A$ it results that $(r_jx)_{j\in J}$ converges in $\A$, say to $u\in\A$.
		Immediately, $\displaystyle \lim_{j\in J}x(xr_{j})=x=\lim_{j\in J} (xr_{j})x
		=\lim_{j\in J} x(r_{j}x)=xu,$
		but $\|xr_j-u\|\le\|x(xr_{j})-xu\|$ by \eqref{eq:inequality},
		therefore $(xr_j)_{j\in J}$ converges to $u$.
		By Proposition~\ref{prop:convergentAI}, $u$ is the unit in $\A$,
		which contradicts the assumption that $\A$ is not unital. Therefore, we conclude that $x$ is a left topological divisor of zero.  
	\end{proof}

	\begin{remark}
		Clearly, in non-unital abelian Banach algebras the above result implies that each approximately invertible element is a topological divisor of zero. In non-commutative case we need to assume that $x$ is approximately right and left invertible in order $x$ to be a two-sided topological divisor of zero.
	\end{remark}

	
	\section{Approximate invertibility in some classes of algebras}\label{sec:AppInv in algebras}
	
	Here we provide a detailed study of approximate invertibility in some classes of algebras such as Banach algebras, C*-algebras and involutive algebras.

	\subsection{Non-unital abelian Banach algebras}
	
	Invertibility of elements of an abelian Banach algebra is a matter of Gelfand's spectral theory.  Here we study approximate invertibility and its connection with Gelfand's transform for the case of non-unital abelian Banach algebras.
	
	The Gelfand theory of non-unital abelian Banach algebras is explained, for example, in~\cite[Section 1.3]{Murphy} and \cite[Section~3.1]{Larsen}. 
	
	For an algebra $\A$ denote by $\Characters{\A}$ the set of its \emph{characters}, i.e., non-zero algebra homomorphisms $\A\to\mathbb{C}$. The set $\Characters{\A}$ is a subset of the closed unit ball of the dual space of $\A$.
	This set is equipped with the relative weak-* topology and may be identified one-to-one with the set $\{M\colon M\,\text{is maximal modular ideal of $\A$}\}$ by means of the mapping $\varphi\mapsto\ker\varphi$.

	Given $x\in\A$, we denote by $\hat x$ the function $\Characters{\A}\to\mathbb{C}$
	defined by $\hat{x}(\phi)=\phi(x)$ for every $\phi\in\Characters{\A}$.
	The correspondence $x\mapsto\hat{x}$ is called the \emph{Gelfand transform}.
	It is well known that the Gelfand transform is a homomorphism of $\A$
	into $C_0(\Characters{\A})$. Moreover, if $\|\cdot\|_{\infty}$ denotes the sup-norm on  $\mathcal{C}_{0}(\mathcal{M}_{\A})$, then $\|\hat{x}\|_{\infty}\leq\|x\|$ for every $x\in\A$.
	
	\begin{proposition}
		\label{prop:aid_converges_pointwisely_to_one}
		Let $\A$ be a non-unital abelian Banach algebra
		and $(\aid_{j})_{j\in J}$ be an approximate identity in $\A$.
		Then $ \lim_{j\in J} \hat{\aid}_{j}(\phi)=1$ for every $\phi\in\mathcal{M}_{\A}$.
		
	\end{proposition}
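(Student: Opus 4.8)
The plan is to exploit two structural facts about characters on a Banach algebra: that every character is automatically continuous (indeed, as recalled just above the statement, $\Characters{\A}$ sits inside the closed unit ball of $\A^{*}$, so $|\phi(x)|\le\|x\|$), and that every character is multiplicative and non-zero. Since $\hat{\aid}_{j}(\phi)=\phi(\aid_{j})$ by the definition of the Gelfand transform, the whole assertion reduces to showing $\phi(\aid_{j})\to 1$.

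First I would fix an arbitrary $\phi\in\Characters{\A}$. Because $\phi$ is non-zero, there exists an element $a\in\A$ with $\phi(a)\neq 0$; this is the only existential step and it is exactly where the hypothesis $\phi\in\Characters{\A}$ (as opposed to the zero functional) is used. Next I would apply the defining property of the approximate identity to this particular $a$, namely $\aid_{j}a\to a$ in the norm of $\A$. Applying the continuous functional $\phi$ and using its multiplicativity gives
\[
\phi(\aid_{j})\,\phi(a)=\phi(\aid_{j}a)\To\phi(a),
\]
where the convergence is justified by $|\phi(\aid_{j}a)-\phi(a)|\le\|\aid_{j}a-a\|\to 0$. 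Dividing through by the non-zero scalar $\phi(a)$ yields $\phi(\aid_{j})\to 1$, which is precisely $\lim_{j\in J}\hat{\aid}_{j}(\phi)=1$.

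There is no genuine obstacle here; the argument is a short three-line computation. The only points requiring a word of care are the automatic continuity of $\phi$ (so that the limit may be passed inside $\phi$) and the existence of $a$ with $\phi(a)\neq 0$ (so that the final division is legitimate), both of which are immediate from the standard theory of characters on abelian Banach algebras quoted in the excerpt. I would also remark that the abelian and non-unital hypotheses are not essential for this particular statement beyond guaranteeing that $\Characters{\A}$ is the relevant carrier space; the same computation works verbatim using either $\aid_{j}a\to a$ or $a\aid_{j}\to a$.
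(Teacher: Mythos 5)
Your proof is correct and follows essentially the same route as the paper's: fix $\phi$, pick $a$ with $\phi(a)\neq 0$, use multiplicativity together with the bound $|\phi(y)|\le\|y\|$ to get $\phi(\aid_j)\phi(a)\to\phi(a)$, and divide by the non-zero scalar $\phi(a)$. The only (immaterial) difference is that you use $\aid_j a\to a$ while the paper uses $x\aid_j\to x$, which makes no difference for a two-sided approximate identity.
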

	
	\begin{proof}
		Let $\phi\in\mathcal{M}_{\A}$.
		Select $x\in\A$ such that $\phi(x)\ne0$, i.e. $\hat{x}(\phi)\ne0$.
		Since $\phi$ is a multiplicative functional on $\A$, we have
		$
		|\hat{x}(\phi)|\,|\hat{\aid}_j(\phi)-1|
		=|\phi(x)\phi(\aid_j)-\phi(x)|
		=|\phi(x\aid_j-x)|
		\le\|x\aid_j-x\|.
		$
		Since the net $(\|x\aid_j-x\|)_{j\in J}$ tends to $0$, the net $(\hat{\aid}_j(\phi)-1)_{j\in J}$  tends to zero as well.
	\end{proof}

	Since every maximal modular ideal $I$ of a non-unital abelian Banach algebra $\A$ is of the form $I=\ker(\varphi)$ for some $\varphi\in\Characters{\A}$, Theorem \ref{thm:dense_ideals_implies_ainv} and Proposition \ref{prop:ainv_modid} may be summarized as follows.
	
	\begin{proposition}\label{prop:ainv_situation_in_cba}
		Let $\A$ be a non-unital abelian Banach algebra with an approximate identity 
		and $x\in\A$. The following statements hold:
		\begin{enumerate}
			\item[\rm{(a)}] $x\in\AppInv(\A)$ if and only if
			$x\A$ is dense in $\A$.
			\item[\rm{(b)}] If $x\in\AppInv(\A)$, then there is no maximal modular ideal $J$ such that $x\in J$.
			\item[\rm{(c)}] If $x\in\AppInv(\A)$,
			then $\hat{x}(\varphi)\ne0$ for every
			$\varphi$ in $\Characters{\A}$.
		\end{enumerate}
	\end{proposition}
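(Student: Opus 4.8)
The plan is to deduce all three assertions from results already established, the only genuinely new ingredient being the elementary observation that in an abelian algebra the right, left, and two-sided versions of approximate invertibility coincide. Indeed, if $(xr_j)_{j\in J}$ is an approximate identity then commutativity gives $r_j x = x r_j$, so the same net witnesses left invertibility; hence $\AppInv(\A)=\AppInv_r(\A)=\AppInv_l(\A)$. Likewise, in the abelian setting every one-sided ideal is automatically two-sided, so maximal modular right ideals are exactly the maximal modular ideals.

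For part (a), I would simply invoke Theorem~\ref{thm:dense_ideals_implies_ainv}: since $\A$ is an abelian normed algebra with an approximate identity, the equivalence ``$x\A$ dense $\Leftrightarrow x\in\AppInv_r(\A)$'' holds, and the remark above upgrades $\AppInv_r(\A)$ to $\AppInv(\A)$. For part (b), I would apply Proposition~\ref{prop:ainv_modid}, which already yields that an approximately right invertible element lies in no maximal modular right ideal; since in the abelian case every maximal modular right ideal is a maximal modular ideal, the conclusion transfers verbatim.

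Part (c) is the only assertion needing a short extra step, and I would handle it in either of two ways. The cleaner route uses the identification, recalled just before the statement, of characters $\varphi\in\Characters{\A}$ with maximal modular ideals $\ker\varphi$: because $\hat{x}(\varphi)=\varphi(x)=0$ is equivalent to $x\in\ker\varphi$, part (b) forbids an approximately invertible $x$ from lying in any such kernel, whence $\hat{x}(\varphi)\neq0$ for every $\varphi$. Alternatively, I can argue quantitatively with the freshly proven Proposition~\ref{prop:aid_converges_pointwisely_to_one}: if $(xr_j)_{j\in J}$ is an approximate identity then, by multiplicativity, $\widehat{xr_j}(\varphi)=\hat{x}(\varphi)\,\hat{r}_j(\varphi)\to 1$ for each fixed $\varphi$, and were $\hat{x}(\varphi)$ equal to $0$ the left-hand side would vanish identically and could not tend to $1$.

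There is no serious obstacle here, since the content is essentially a repackaging of Theorem~\ref{thm:dense_ideals_implies_ainv} and Proposition~\ref{prop:ainv_modid}. The one point demanding a little care is the bookkeeping in (c): the character–maximal-modular-ideal dictionary $\varphi\mapsto\ker\varphi$ is precisely the bridge that converts the ideal-theoretic statement (b) into the pointwise non-vanishing statement (c), and once this correspondence is invoked the implication is immediate.
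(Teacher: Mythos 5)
Your proof is correct and matches the paper's own argument: the paper likewise derives (a) from Theorem~\ref{thm:dense_ideals_implies_ainv}, (b) from Proposition~\ref{prop:ainv_modid}, and (c) from the identification of maximal modular ideals with kernels of characters, exactly as you do. Your extra remark that right, left, and two-sided approximate invertibility coincide in the abelian case, and your alternative multiplicativity argument for (c) via $\widehat{xr_j}(\varphi)=\hat{x}(\varphi)\,\hat{r}_j(\varphi)\to 1$, are both sound but add nothing beyond the paper's route.
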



	\begin{remark}
		By Proposition \ref{prop:rich_modular_ideals} the reverse implication in (b) need not hold in general, see the small disk algebra in  Section~\ref{Section:SmallDiskAlgebra}. However, there are some classes of non-unital abelian Banach algebras where the converse holds true, e.g., non-unital abelian C*-algebras, see Section~\ref{Section:AbelianC*-algebras}. 
		More general, the reverse implication in (b) holds for the regular semisimple tauberian abelian Banach algebras, see~\cite[Lemma 5.1.9]{Kaniuth}. A general characterization of all these algebras is yet unknown for us.
	\end{remark}
	
	The next result is a consequence of  Proposition \ref{prop:ainv_situation_in_cba} saying that the set $\AppInv(\A)$ is a proper subset of $\A\setminus\{0\}$ provided $\Characters{\A}\neq\emptyset$.
	\begin{corollary}\label{cor-app=A/0}
		Let $\A\neq\{0\}$ be a non-unital abelian Banach algebra with an approximate identity 
		such that $\A\setminus\{0\}=\AppInv(\A)$. Then $\Characters{\A}=\emptyset$, i.e., there are no proper modular ideals in $\A$.
			\end{corollary}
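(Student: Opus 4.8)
The plan is to argue by contradiction and lean on part (c) of Proposition~\ref{prop:ainv_situation_in_cba}, which says that an approximately invertible element cannot be killed by any character. So I would suppose $\Characters{\A}\neq\emptyset$ and fix a character $\varphi\in\Characters{\A}$. Since the hypothesis is precisely that every nonzero $x\in\A$ belongs to $\AppInv(\A)$, Proposition~\ref{prop:ainv_situation_in_cba}(c) gives $\hat{x}(\varphi)=\varphi(x)\neq0$ for every nonzero $x$. In other words $\varphi$ vanishes only at the origin, i.e. $\ker\varphi=\{0\}$.

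Next I would turn this into a clash with non-unitality. As $\varphi$ is a nonzero algebra homomorphism into $\mathbb{C}$, its range is a nonzero linear subspace of $\mathbb{C}$ and hence all of $\mathbb{C}$, so $\varphi$ is surjective. Together with $\ker\varphi=\{0\}$ this makes $\varphi\colon\A\to\mathbb{C}$ a bijective algebra homomorphism, so $\A$ is isomorphic to the unital algebra $\mathbb{C}$. Then $\varphi^{-1}(1)$ is a unit for $\A$, contradicting the assumption that $\A$ is non-unital. This forces $\Characters{\A}=\emptyset$.

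Finally I would translate the emptiness of $\Characters{\A}$ into the ideal-theoretic formulation. Recall from the discussion preceding the statement that $\varphi\mapsto\ker\varphi$ identifies $\Characters{\A}$ bijectively with the set of maximal modular ideals of $\A$; hence $\Characters{\A}=\emptyset$ is equivalent to the absence of maximal modular ideals. Invoking the standard fact that in a Banach algebra every proper modular ideal is contained in a maximal modular ideal (see \cite[Section~1.1]{Larsen}), the nonexistence of maximal modular ideals upgrades to the nonexistence of proper modular ideals altogether; note that the zero ideal is not modular here, since a modular zero ideal would again produce a unit.

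The arithmetic is short, so the step deserving the most care is excluding the degenerate case $\ker\varphi=\{0\}$: one must verify that a bijective character genuinely transports the unit of $\mathbb{C}$ back to a true unit of $\A$, and one must be careful with the bookkeeping linking maximal modular ideals to characters (and with the status of the zero ideal) so that the two phrasings of the conclusion really do coincide.
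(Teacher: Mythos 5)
Your argument is correct and is essentially the route the paper intends: the corollary is presented there as a direct consequence of Proposition~\ref{prop:ainv_situation_in_cba}(c), which is exactly the lever you use (a character $\varphi$ would have to vanish only at $0$, and a character with $\ker\varphi=\{0\}$ forces $\A\cong\mathbb{C}$, contradicting non-unitality). Your additional bookkeeping --- excluding the degenerate case $\ker\varphi=\{0\}$ explicitly, and upgrading the absence of maximal modular ideals to the absence of all proper modular ideals via the standard containment fact from \cite[Section~1.1]{Larsen} --- only spells out steps the paper leaves implicit.
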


	\subsection{Non-unital, non-abelian C*-algebras and maximal modular 
		ideals}\label{Section:nonAbelianC*-algebras}
	
	Next, we study approximately invertible elements in C*-algebras by means of maximal modular right ideals and pure states.
	It is well-known that in a C*-algebra $\A$ the following identity holds: 
	\begin{equation}\label{norminvolution}
		\|x^{*}\|=\|x\|,\quad\, x\in\A.
	\end{equation}
	However, since in any normed algebra $\A$ with a continuous involution $x\rightarrow\,x^{*}$ there is an equivalent submultiplicative norm satisfying the condition \eqref{norminvolution}, see \cite{Zelazko}, we may consider the mentioned relationship in algebras $\A$ with involutions and norms   satisfying \eqref{norminvolution}. The relationship between an element approximately right (left) invertible and its conjugate is the following.
	
	\begin{proposition}\label{AppInvr-AppInvl}
		Let $\A$ be a normed $*$-algebra. Then $x\in\AppInv_{r}(\A)$ if and only if $x^{*}\in\AppInv_{\ell}(\A)$.
	\end{proposition}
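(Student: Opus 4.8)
The plan is to exploit that the involution is an isometric anti-homomorphism, so that it carries approximate identities to approximate identities while interchanging left and right multiplication. The one technical fact I would record first is that a net $(e_j)_{j\in J}$ is an approximate identity in $\A$ if and only if $(e_j^{*})_{j\in J}$ is. Indeed, the norm identity~\eqref{norminvolution} makes the involution an isometry, hence continuous. Given $z\in\A$, the defining limits of $(e_j)$ applied to $z^{*}$ read $\lim_{j} e_j z^{*}=z^{*}$ and $\lim_{j} z^{*} e_j=z^{*}$; taking adjoints and using $(e_j z^{*})^{*}=z e_j^{*}$ and $(z^{*} e_j)^{*}=e_j^{*} z$ together with continuity of the involution yields $\lim_{j} z e_j^{*}=z$ and $\lim_{j} e_j^{*} z=z$ for every $z\in\A$, which is exactly the assertion that $(e_j^{*})_{j\in J}$ is an approximate identity. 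The converse follows by symmetry, replacing $e_j$ by $e_j^{*}$ and invoking $(e_j^{*})^{*}=e_j$.

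With this lemma the forward implication is immediate. If $x\in\AppInv_{r}(\A)$, I would fix a net $(r_j)_{j\in J}$ for which $(x r_j)_{j\in J}$ is an approximate identity, observe that $\bigl((x r_j)^{*}\bigr)_{j\in J}=(r_j^{*} x^{*})_{j\in J}$ is then an approximate identity by the fact above, and set $l_j:=r_j^{*}$ to witness $x^{*}\in\AppInv_{\ell}(\A)$.

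The converse direction is entirely symmetric: from a net $(l_j)_{j\in J}$ making $(l_j x^{*})_{j\in J}$ an approximate identity one passes to adjoints, so that $(x l_j^{*})_{j\in J}$ is an approximate identity and $r_j:=l_j^{*}$ exhibits $x\in\AppInv_{r}(\A)$. Since $(x^{*})^{*}=x$, this could equally be obtained by applying the forward implication to $x^{*}$ in place of $x$.

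I do not anticipate a real obstacle; the only points demanding care are the continuity of the involution, supplied by~\eqref{norminvolution}, and the bookkeeping that the involution reverses products, so that right multiplication by $x$ becomes left multiplication by $x^{*}$. Should one wish to weaken the isometry hypothesis, continuity of the involution alone would already suffice for the adjoint net to inherit the required limits.
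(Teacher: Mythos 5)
Your proof is correct and takes essentially the same route as the paper's: both set $l_j=r_j^{*}$ and use the isometry~\eqref{norminvolution} to transfer the approximate-identity limits across the involution, applying the defining limits at $z^{*}$ and taking adjoints. Extracting this as a standalone lemma (the adjoint net of an approximate identity is again an approximate identity) is merely a tidy repackaging of the paper's inline norm computation.
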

	
	\begin{proof}
		If $x\in\AppInv_{r}(\A)$, then there is a net $(r_{j})_{j\in J}$ in $\A$ such that for all $y\in\A$ we have $\lim_{j\in J}\|xr_{j}y-y\|=\lim_{j\in J}\|yxr_{j}-y\|=0$. Hence, by \eqref{norminvolution}, the net $(l_{j}=r_{j}^{*})_{j\in J}$, is such that 
		\begin{align*}
			\lim_{j\in J}\|l_{j}x^{*}y-y\|&=\lim_{j\in J}\left\|\left(l_{j}x^{*}y-y\right)^{*}\right\|
			=\lim_{j\in J}\|y^{*}xr_{j}-y^{*}\|=0\\
			&=\lim_{j\in J}\|yl_{j}x^{*}-y\|=\lim_{j\in J}\|xr_{j}y^{*}-y^{*}\|, \quad y\in\A.
		\end{align*}
		That is, $(l_{j}x)_{j\in J}$ is an approximate identity in $\A$ proving that $x^{*}\in\AppInv_{\ell}(\A)$. The same argument provides the sufficiency.
	\end{proof}
	
	It is well-known that every C*-algebra possesses an approximate identity (see, for instance \cite[Theorem 3.1.1]{Murphy}).
	We denote by $\operatorname{PS}(\A)$ the set of all pure states on $\A$.
	For any $\tau\in\operatorname{PS}(\A)$, the set
	\begin{equation}\label{Nt}
		N_{\tau}=\{a\in\A\colon \tau(a^{*}a)=0\}
	\end{equation}
	is a maximal modular left ideal of $\A$,
	and $\tau\mapsto N_\tau$ is a bijective correspondence
	between the pure states and the modular maximal left ideals of a C*-algebra, see for instance \cite[Theorem 5.3.5]{Murphy}. Then the criterion for approximate invertibility in a non-unital C*-algebra reads as follows. 
	
	\begin{theorem}\label{Appinvl-modi-C*}
		Let $\A$ be a non-unital non-abelian C*-algebra, and $x\in\A$. Then  the following statements are equivalent:
		\begin{enumerate}
			\item $x\in\AppInv_{r}(\A)$.
			\item  $x$ does not belong to any maximal modular right ideal.
			\item  $\tau(xx^{*})\neq 0$ for all $\tau\in \operatorname{PS}(\A)$.
		\end{enumerate}
	\end{theorem}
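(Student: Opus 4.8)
The plan is to obtain the equivalence $(i)\Leftrightarrow(ii)$ from Proposition~\ref{prop:rich_modular_ideals} and the equivalence $(ii)\Leftrightarrow(iii)$ by transporting, via the involution, the bijection $\tau\mapsto N_{\tau}$ between pure states and maximal modular \emph{left} ideals recorded in~\eqref{Nt} to the setting of \emph{right} ideals. Since a C*-algebra always carries an approximate identity, Theorem~\ref{thm:dense_ideals_implies_ainv} and Proposition~\ref{prop:rich_modular_ideals} will be available as soon as the hull hypothesis of the latter is checked for $\A$.

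First I would settle $(ii)\Leftrightarrow(iii)$. The map $a\mapsto a^{*}$ is an isometric anti-automorphism, so it sends left ideals to right ideals while preserving inclusions and modularity: if $v$ satisfies $xv-x\in I$ for all $x\in\A$, then $v^{*}y-y\in I^{*}$ for all $y\in\A$. Consequently the maximal modular right ideals of $\A$ are exactly the sets $N_{\tau}^{*}=\{a\in\A\colon \tau(aa^{*})=0\}$, $\tau\in\operatorname{PS}(\A)$. Thus $x$ lies in no maximal modular right ideal precisely when $x\notin N_{\tau}^{*}$ for every $\tau$, i.e.\ when $\tau(xx^{*})\ne0$ for every $\tau\in\operatorname{PS}(\A)$, which is exactly $(ii)\Leftrightarrow(iii)$.

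It remains to prove $(i)\Leftrightarrow(ii)$. For this I would verify that $\Hull_{r}(I,\A)\ne\emptyset$ for every proper closed right ideal $I$ and then quote Proposition~\ref{prop:rich_modular_ideals}, whose conditions (i) and (iii) are our (i) and (ii). Applying the involution once more reduces the hull condition to the following structural fact, which is the heart of the matter: every proper closed \emph{left} ideal $L$ of a C*-algebra is contained in some maximal modular left ideal $N_{\tau}$. The hard part will be precisely this. I would establish it by a quasi-state argument: the set $F=\{\rho\in\A^{*}\colon\rho\ge0,\ \|\rho\|\le1,\ \rho(a^{*}a)=0\ \text{for all}\ a\in L\}$ is a weak-$*$ compact convex subset of the quasi-state space, and in fact a \emph{face} of it, since $\rho=\frac{1}{2}(\rho_{1}+\rho_{2})\in F$ with $\rho_{1},\rho_{2}$ quasi-states forces $\rho_{1},\rho_{2}\in F$. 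Hence every extreme point of $F$ is an extreme point of the quasi-state space, i.e.\ either $0$ or a pure state. The delicate point is the nonemptiness of $F\setminus\{0\}$, which I would derive from the approximate unit of the hereditary C*-subalgebra $L\cap L^{*}$ (a right approximate unit for $L$); this furnishes an actual state annihilating $L$, so that the Krein--Milman theorem yields a pure state $\tau\in F$, i.e.\ $L\subseteq N_{\tau}$. (Alternatively one may simply cite the standard correspondence between closed left ideals and weak-$*$ closed faces of the quasi-state space, see~\cite{Murphy}.) Taking adjoints turns $L\subseteq N_{\tau}$ into $I\subseteq N_{\tau}^{*}$, so $\Hull_{r}(I,\A)\ne\emptyset$, completing the argument.
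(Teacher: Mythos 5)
Your proof is correct, but it packages the argument differently from the paper. Both proofs rest on the same hard external fact --- every proper closed left ideal of a C*-algebra is contained in $N_\tau$ for some pure state $\tau$, together with the bijection $\tau\mapsto N_\tau$ from \cite[Theorems 5.3.3 and 5.3.5]{Murphy}, transported to right ideals by the involution --- but you deploy it globally while the paper deploys it pointwise. The paper gets (i)$\Rightarrow$(ii) directly from Proposition~\ref{prop:ainv_modid}, quotes Murphy for (ii)$\Leftrightarrow$(iii), and proves (iii)$\Rightarrow$(i) by contraposition: if $x\notin\AppInv_r(\A)$, then $x\A$ is not dense (Theorem~\ref{thm:dense_ideals_implies_ainv}), so $\overline{\A x^*}$ is a proper closed left ideal (via the involution, Proposition~\ref{AppInvr-AppInvl}) containing $x^*$ (Proposition~\ref{prop:x-oveAx}), and Murphy's Theorem 5.3.3 applied to this \emph{single} ideal yields $\tau$ with $\tau(xx^*)=0$. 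You instead verify the hull hypothesis $\Hull_r(I,\A)\ne\emptyset$ for \emph{all} proper closed right ideals and invoke Proposition~\ref{prop:rich_modular_ideals}; this proves slightly more than the theorem needs (it certifies that C*-algebras belong to the ``rich'' class of that proposition, a fact the paper never records explicitly) and is arguably the cleaner, more reusable decomposition. Your in-house replacement of the Murphy citation is mostly sound --- the face property of $F$ and the Krein--Milman extraction of a nonzero extreme point, hence a pure state, are correct, since a compact convex face whose only extreme point is $0$ must equal $\{0\}$ --- but the step you yourself call delicate really is: taking weak-$*$ cluster points of states that kill the approximate unit $(u_\lambda)$ of $L\cap L^*$ necessarily happens on the unitization, and the cluster point may restrict to the zero functional on $\A$ (it can converge to the character annihilating $\A$), so the one-line sketch does not yet produce a nonzero element of $F$; closing this requires extra input such as the hereditary-subalgebra correspondence or open projections in the bidual. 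Since you explicitly offer the citation to \cite{Murphy} as a fallback at exactly this point, the proposal as a whole goes through.
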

	
	\begin{proof} 
		By Proposition~\ref{prop:ainv_modid},
		(i) implies (ii) and the equivalence (ii)$\Leftrightarrow$(iii) follows from \cite[Theorem 5.3.5]{Murphy}, so
		we only have to prove that (iii) implies (i).

		Suppose that (i) is not true. Then, by Theorem  \ref{thm:dense_ideals_implies_ainv} $x\A $ is not dense in $\A$ and hence $\overline{\A x^{*}}$ is a proper left ideal of $\A$ by Proposition \ref{AppInvr-AppInvl}  and 
		 
		$x^{*}\hspace{-5pt}\in\hspace{-3pt}\overline{\A x^{*}}$ by Proposition \ref{prop:x-oveAx}. 
		However, 
		by~\cite[Theorem 5.3.3]{Murphy}, there exists $\tau \hspace{-3pt}\in \hspace{-3pt}\operatorname{PS}(\A)$ such that $\overline{\A x^{*}}\hspace{-3pt}\subseteq \hspace{-3pt} N_\tau$.
		So, $x^{*}\hspace{-3pt}\in\hspace{-3pt} N_\tau$ meaning that (iii) is not true.
	\end{proof}
	
	\subsection{Abelian C*-algebras}\label{Section:AbelianC*-algebras}
	
	In abelian C*-algebras $\A$, any pure state is a character, see for instance \cite[Theorem 5.1.6]{Murphy}, thus Theorem \ref{Appinvl-modi-C*} may be reformulated accordingly providing that the condition $\hat{x}(\phi)\ne0$ for every $\phi\in\Characters{\A}$
	is not only necessary, but also sufficient
	for the approximate invertibility of $x$. By famous Gelfand--Naimark theorem, every abelian C*-algebra $\A$
	is isometrically isomorphic to $C_0(\Characters{\A})$.
	Therefore, without loss of generality, we may only consider the case $\A=C_0(X)$,
	where $X$ is a non-empty locally compact Hausdorff space. Now, we are going to show Theorem \ref{Appinvl-modi-C*} for abelian C*-algebras using only elementary tools and as a-by product we get that the interior of the set $\AppInv(C_{0}(X))$ is empty for every non-compact locally compact Hausdorff space $X$.

	Denote by $\C$ the set of all non-empty compact subsets of $X$
	equipped with the partial order $\subseteq$.
	Note that $\C$ is \emph{directed} by $\subseteq$:
	if $K_1,K_2\in\C$, then $K_1\cup K_2\in\C$,
	$K_1\subseteq K_1\cup K_2$ and $K_2\subseteq K_1\cup K_2$.
	For every $K\in\C$ denote by $1_K$
	the function $K\to\mathbb{C}$
	defined by $1_K(t)=1$ for every $t\in K$.

	\begin{proposition}
		\label{prop:crit_aid_in_csa}
		Let $(\aid_j)_{j\in J}$ be a bounded net in $C_0(X)$.
		Then the following conditions are equivalent:
		\begin{enumerate}
			\item $(\aid_j)_{j\in J}$ is an approximate identity in $C_0(X)$;
			\item for every $K\in\C$,
			the net of restrictions $(\aid_j|_K)_{j\in J}$ converges uniformly to $1_K$.
		\end{enumerate}
	\end{proposition}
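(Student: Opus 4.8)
The plan is to reduce everything to a single scalar estimate by exploiting commutativity: since $C_0(X)$ is abelian, the two defining limits of an approximate identity coincide, so condition (i) is equivalent to $\|\aid_j f - f\|_\infty \to 0$ for every $f\in C_0(X)$, i.e. to uniform smallness of $(\aid_j-1)f$ over all of $X$. Condition (ii) instead asks for uniform smallness of $\aid_j - 1$ over each compact set. The two implications are then separated according to where the ``mass'' of a test function $f$ is located, and the role of the boundedness hypothesis becomes transparent.

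For the implication (i)$\Rightarrow$(ii), fix $K\in\C$. By the Urysohn lemma for locally compact Hausdorff spaces there is $f\in C_c(X)\subseteq C_0(X)$ with $0\le f\le 1$ and $f\equiv 1$ on $K$. Then for each $t\in K$ we have $f(t)=1$, whence
\[
|\aid_j(t)-1| = |\aid_j(t)f(t)-f(t)| \le \|\aid_j f - f\|_\infty .
\]
Taking the supremum over $t\in K$ gives $\sup_{t\in K}|\aid_j(t)-1_K(t)| \le \|\aid_j f - f\|_\infty \to 0$, which is exactly the uniform convergence demanded in (ii). Note this direction uses nothing about boundedness of the net.

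For (ii)$\Rightarrow$(i), boundedness enters decisively. Set $M=\sup_j\|\aid_j\|_\infty<\infty$, fix $f\in C_0(X)$ (the case $f=0$ being trivial) and $\eps>0$. Because $f$ vanishes at infinity, the set $K=\{t\colon |f(t)|\ge \eps/(2(M+1))\}$ is compact, so $K\in\C$. I then estimate $\|\aid_j f - f\|_\infty=\sup_t|\aid_j(t)-1|\,|f(t)|$ by splitting at $K$: off $K$ one has $|\aid_j(t)-1|\,|f(t)|\le (M+1)\cdot\eps/(2(M+1))=\eps/2$ uniformly in $j$, while on $K$ one bounds by $\|f\|_\infty\,\sup_{t\in K}|\aid_j(t)-1|$, which by (ii) is below $\eps/2$ for all $j$ past some index. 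Combining the two pieces yields $\|\aid_j f - f\|_\infty<\eps$ eventually, so $(\aid_j)_{j\in J}$ is an approximate identity.

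The only genuinely delicate point is this last direction: uniform convergence of $\aid_j$ to $1$ on each compact set does not by itself force $\aid_j f\to f$ uniformly, since $\aid_j$ might be large off any prescribed compact set. It is precisely the boundedness of the net, coupled with the vanishing-at-infinity of $f$, that controls this tail, and this is where the hypothesis that the net be bounded is indispensable.
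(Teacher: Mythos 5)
Your proof is correct and follows essentially the same route as the paper's: Urysohn's lemma to produce a function equal to $1$ on $K$ for (i)$\Rightarrow$(ii), and for (ii)$\Rightarrow$(i) a split of $X$ at a compact sublevel set of $f$, with the boundedness $M=\sup_j\|\aid_j\|_\infty$ controlling the tail off $K$ exactly as in the paper (your version differs only in the $\eps/2$ bookkeeping and in noting explicitly that commutativity collapses the two limits in the definition). The only cosmetic point is that your set $K=\{t\colon |f(t)|\ge \eps/(2(M+1))\}$ could be empty for small $\|f\|_\infty$, in which case the off-$K$ estimate already covers all of $X$ (or one replaces $K$ by any singleton), so nothing breaks.
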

	
	\begin{proof}
		(i)$\Rightarrow$(ii).
		Let $(\aid_j)_{j\in J}$ be an approximate unit in $C_0(X)$
		and $K\in\C$. By Uryson's Lemma there exists a function $f\in C_0(X)$
		such that $f|_K=1_K$. Since $(\aid_j)_{j\in J}$ is an approximate unit in $C_0(T)$,
		then $\sup_{t\in X}|f(t) \aid_j(t)-f(t)|\to0$
		and, in particular, $\sup_{t\in K}|f(t)\aid_j(t)-f(t)|\to0$.
		The latter means that $\sup_{t\in K}|\aid_j(t)-1|\to0$.
		
		(ii)$\Rightarrow$(i). Put $\displaystyle M=\sup_{j\in J}\|e_j\|_{\infty}.$ 
		Let $f\in C_0(X)$ and $\eps>0$.
		The case $f\equiv0$ is trivial, so we suppose that $f\nequiv0$.
		Using the definition of $C_0(X)$ choose $K\in\C$
		such that $
		|f(t)|\le\dfrac{\eps}{M+1}
		$ for every $t\in X\setminus K$.
		Since $(\aid_j|_K)_{j\in J}$ converges uniformly to $1_{\hspn K}$,
		find $j_1\hspn\in\hspn J$ such that  $\displaystyle\sup_{t\in K}|\aid_{j}(t)-\hspace{-1pt}1|\hspn<\hspn\frac{\eps}{\|f\|_{\infty}}$, for every~$j\hspn\succ\hspn j_1$.
		Let $j\succ j_1$.
		Then $|f(t)\aid_j(t)-f(t)|
		\le\|f\|_{\infty}\,|\aid_j(t)-1|<\eps,$ for every $t\in K$ and for every $t\in X\setminus K$
		\begin{align*}
			|f(t)\aid_j(t)-f(t)|
			\le |f(t)| |\aid_j(t)-1|
			\le |f(t)| (M+1) < \eps.
		\end{align*} This completes the proof.
	\end{proof}
	
	\begin{corollary} \label{cor:aid_in_csa}
		Let $(\aid_K)_{K\in\C}$ be a net in $C_0(X)$ such that
		$\|\aid_K\|_\infty=1$ for every $K\in\C$
		and $\aid_K(t)=1$ for every $K\in\C$ and every $t\in K$.
		Then $(\aid_K)_{K\in\C}$ is an approximate identity in $C_0(X)$.
	\end{corollary}
	
	\begin{proof}
		Given a compact $K_1\in\C$,
		for every $K\in\C$ with $K_1\subseteq K$
		we have $\aid_K|_{K_1}=1_{K_1}$.
		It means that the net $(\aid_K|_{K_1})_{K\in\C}$
		converges uniformly to $1_{K_1}$.
		By Proposition~\ref{prop:crit_aid_in_csa},
		the net $(\aid_K)_{K\in\C}$
		is an approximate identity in $C_0(X)$.
	\end{proof}
	
	\begin{proposition}[criterion
		for approximate invertibility in abelian C*-algebras]
		\label{prop:criterion_ainv_in_csa}
		Let $X$ be a non-empty locally compact Hausdorff space and $f\in C_0(X)$.
		Then the following conditions are equivalent:
		\begin{enumerate}
			\item  $f$ is approximately invertible;
			\item  $f C_0(X)$ is dense in $C_0(X)$;
			\item  $f(t)\ne0$ for every $t\in X$.
		\end{enumerate}
	\end{proposition}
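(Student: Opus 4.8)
The plan is to reduce the whole statement to the already available equivalence between approximate invertibility and density of the principal ideal, and then to characterize that density by non-vanishing. Since $C_0(X)$ is a C*-algebra it carries an approximate identity, so Theorem~\ref{thm:dense_ideals_implies_ainv} gives the equivalence (i)$\Leftrightarrow$(ii) for free (commutativity also makes the one-sided and two-sided notions coincide here). It therefore remains only to weave (iii) into the cycle, which I would do by establishing (iii)$\Rightarrow$(i) together with $\neg$(iii)$\Rightarrow\neg$(ii).

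The implication $\neg$(iii)$\Rightarrow\neg$(ii) is routine. If $f(t_0)=0$ for some $t_0\in X$, then every product $fg$ vanishes at $t_0$, so $fC_0(X)$, and hence its closure, is contained in $\ker(\eval_{t_0})$, the kernel of evaluation at $t_0$. Local compactness and Urysohn's lemma provide a function in $C_0(X)$ not vanishing at $t_0$, so this kernel is a proper closed ideal; consequently $fC_0(X)$ is not dense, which is exactly $\neg$(ii).

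The real content is (iii)$\Rightarrow$(i), where I would construct an explicit approximate identity lying inside $fC_0(X)$ and then invoke Corollary~\ref{cor:aid_in_csa}. Assume $f(t)\ne0$ for every $t\in X$. For each compact $K\in\C$, Urysohn's lemma for locally compact Hausdorff spaces yields $\psi_K\in C_0(X)$ with compact support, $0\le\psi_K\le1$, and $\psi_K\equiv1$ on $K$; in particular $\|\psi_K\|_\infty=1$. Because $f$ never vanishes and $\psi_K$ is compactly supported, the quotient $r_K\eqdef\psi_K/f$ is a well-defined continuous function whose support is contained in $\support\psi_K$, hence $r_K\in C_0(X)$, and $f r_K=\psi_K$. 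The net $(f r_K)_{K\in\C}=(\psi_K)_{K\in\C}$ then satisfies $\|\psi_K\|_\infty=1$ and $\psi_K(t)=1$ for every $t\in K$, so Corollary~\ref{cor:aid_in_csa} shows it is an approximate identity in $C_0(X)$. By Definition~\ref{def:ApproInv} this means $f$ is approximately invertible, establishing (i) and closing the cycle (i)$\Leftrightarrow$(ii)$\Rightarrow$(iii)$\Rightarrow$(i).

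The only step requiring care is the well-definedness and decay of $r_K=\psi_K/f$: this is precisely where the non-vanishing of $f$ enters, and it is essential that $\psi_K$ have \emph{compact} support so that $r_K$ inherits compact support and lands in $C_0(X)$ no matter how small $|f|$ becomes away from $K$ (a naive choice such as $1/f$ would fail to lie in $C_0(X)$). I expect this to be the main, though still elementary, obstacle; once it is handled, the proof is entirely self-contained, relying only on Urysohn's lemma and the previously proven Corollary~\ref{cor:aid_in_csa}, with no appeal to Gelfand theory.
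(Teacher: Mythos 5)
Your proof is correct, and its substantive core coincides with the paper's: the equivalence (i)$\Leftrightarrow$(ii) comes from Theorem~\ref{thm:dense_ideals_implies_ainv}, and (iii)$\Rightarrow$(i) is proved by dividing a Urysohn function by $f$ and feeding the resulting net into Corollary~\ref{cor:aid_in_csa}, exactly as in the paper. You differ in one leg: for (ii)$\Rightarrow$(iii) the paper routes through Proposition~\ref{prop:ainv_modid} together with the classification (cited from Larsen) of the modular ideals of $C_0(X)$ as evaluation kernels, whereas you argue directly that $f(t_0)=0$ forces $\overline{fC_0(X)}\subseteq\ker(\eval_{t_0})$, a proper closed ideal since Urysohn's lemma yields a function not vanishing at $t_0$; your version is more elementary and self-contained, at the cost of not exhibiting the ideal-theoretic picture the paper is assembling for later use. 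A genuine merit of your write-up is the insistence that the Urysohn function $\psi_K$ have \emph{compact} support, which is what guarantees $r_K=\psi_K/f\in C_0(X)$: the paper's proof merely selects $\aid_K\in C_0(X)$ with values in $[0,1]$ equal to $1$ on $K$ and asserts $g_K=\aid_K/f\in C_0(X)$, an assertion that can fail without compact support (on $X=\mathbb{R}$ take $f(t)=e^{-t^2}$ and $\aid_K$ decaying slowly, so $\aid_K/f$ is unbounded). Since the locally compact Urysohn lemma does produce compactly supported functions, the paper's argument is repairable as stated, but your explicit handling of this point closes a gap the paper leaves implicit.
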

	
	\begin{proof}
		This is a particular case of Theorem~\ref{Appinvl-modi-C*},
		but we will provide a more elementary proof.
		By  Theorem \ref{thm:dense_ideals_implies_ainv},
		(i) is equivalent to (ii). By Proposition  \ref{prop:ainv_modid} if (ii) holds then there is no maximal modular ideal $J$ of $C_{0}(X)$ such that $f\in J$. However, by \cite[Theorem 3.1.2 and Theorem 4.1.2]{Larsen} all the modular ideals in $C_{0}(X)$ are of the form $J=\left\{g \mid g \in C_{0}(X), g(t)=0\right\}$ for some $t\in X$. Thus $f(t)\neq0$ for all $t\in X$.
		
		On the other hand, let us suppose that (iii) holds and prove (i).
		For every $K\in\C$, applying the Uryson's Lemma,
		we select a function $\aid_K\in C_0(X)$ with values in $[0,1]$
		such that $\aid_K(t)=1$ for every $t\in K$.
		After that we construct $g_K\colon T\to\mathbb{C}$ by the rule
		$g_{K}=\dfrac{\aid_{K}}{f}.
		$
		Then $g_K\in C_0(X)$ and $f g_K=\aid_K$.
		By Corollary~\ref{cor:aid_in_csa},
		$(\aid_K)_{K\in\C}$ is an approximate identity.
	\end{proof}
	
	\begin{proposition}\label{app-inv-C0T}
		Let $X$ be a non-compact locally compact Hausdorff space.
		Then the interior of $\AppInv(C_0(X))$ is empty.
	\end{proposition}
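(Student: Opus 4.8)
The plan is to reduce the statement to the characterization of approximately invertible elements established in Proposition~\ref{prop:criterion_ainv_in_csa}, which identifies $\AppInv(C_0(X))$ with the set of nowhere-vanishing functions $\{f\in C_0(X)\colon f(t)\neq0\ \text{for all}\ t\in X\}$. To show that this set has empty interior, I would fix an arbitrary nowhere-vanishing $f\in C_0(X)$ together with an arbitrary $\eps>0$, and produce a function $g\in C_0(X)$ with $\|f-g\|_{\infty}<\eps$ that vanishes at some point; by Proposition~\ref{prop:criterion_ainv_in_csa} such a $g$ lies outside $\AppInv(C_0(X))$, so no open ball around $f$ can be contained in the set, whence $f$ is not an interior point.

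The decisive point, and the place where non-compactness of $X$ enters, is the existence of a point at which $f$ is already small. Since $f\in C_0(X)$, the set $\{t\in X\colon |f(t)|\ge\eps\}$ is compact, and because $X$ itself is non-compact this set is a proper subset of $X$; hence there is a point $t_0\in X$ with $|f(t_0)|<\eps$. By continuity of $f$ I would then choose an open neighbourhood $U$ of $t_0$ on which $|f|<\eps$, and, invoking Urysohn's Lemma in the locally compact Hausdorff space $X$, a function $h\in C_0(X)$ with $0\le h\le 1$, $h(t_0)=1$, and $\support h\subseteq U$. Setting $g\eqdef f-fh=f(1-h)$ then completes the construction.

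It remains to verify the required properties, all of which are routine. The product $fh$ lies in $C_0(X)$ (indeed it has compact support), so $g\in C_0(X)$; moreover $g(t_0)=f(t_0)(1-h(t_0))=0$, so $g$ vanishes somewhere and thus is not approximately invertible. Finally, since $h$ is supported in $U$ and $0\le h\le 1$, one has $\|f-g\|_{\infty}=\|fh\|_{\infty}=\sup_{t\in U}|f(t)|\,h(t)\le\sup_{t\in U}|f(t)|<\eps$. Thus $f$ admits a non-approximately-invertible element arbitrarily close to it, and $f$ cannot be an interior point. The main subtlety will not be in any single estimate but in arranging the perturbation to be \emph{global} in effect (it creates a genuine zero) yet \emph{local} in size: localizing the bump $h$ to the region $U$ where $f$ is already below $\eps$ is exactly what keeps $\|fh\|_{\infty}<\eps$, and the very existence of such a region is furnished by the $C_0$-decay of $f$ together with the non-compactness of $X$.
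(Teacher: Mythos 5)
Your argument is correct and is essentially the paper's own proof in mirror image: the paper multiplies $f$ by a Urysohn cutoff equal to $1$ on the compact set where $f$ is large (so the perturbed function $fg$ vanishes on the non-empty tail $X\setminus U$), while you multiply by $1-h$ with $h$ a Urysohn bump at a single tail point $t_0$ where $|f(t_0)|<\eps$; both proofs rest on the criterion of Proposition~\ref{prop:criterion_ainv_in_csa} together with $C_0$-decay, non-compactness, and Urysohn's Lemma, and the perturbation in each case is $f$ times a cutoff supported where $|f|$ is small. One pedantic point: the pointwise bound $|f|<\eps$ on $U$ yields only $\sup_{t\in U}|f(t)|\le\eps$ in general, but since $fh$ has compact support the supremum of $|fh|$ is attained, so your strict inequality $\|fh\|_\infty<\eps$ does hold (alternatively, run the argument with $\eps/2$, as the paper does).
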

	
	\begin{proof}
		Let $f\in\AppInv(C_0(X))$ and $\eps>0$.
		Choose $K\in\C$ such that
		$|f(t)|<\eps/2$ for every $t\in X\setminus K$.
		Choose an open set $U$ such that $K\subset U$ and $U\ne X$.
		Using the Urysohn’s Lemma for locally compact Hausdorff spaces,
		construct a function $g\in C_0(X)$ such that
		$g(t)=1$ for every $t\in K$, $0\le g(t)\le 1$ for every $t\in X$
		and $\support(g)\subset U$.
		Then $fg\in C_0(X)$,
		$fg$ vanishes on $X\setminus U$ and therefore
		by Proposition~\ref{prop:criterion_ainv_in_csa} $fg\notin\AppInv(C_0(X))$.
		Moreover, $fg$ coincides with $f$ on $K$, and
		\begin{displaymath}
			\|fg-f\|_\infty
			= \sup_{t\in X\setminus K}|f(t)g(t)-f(t)|
			= \sup_{t\in X\setminus K}|1-g(t)|\, |f(t)| \le \frac{\eps}{2}, 
		\end{displaymath}
		which proves the result.
	\end{proof}

	From Proposition~\ref{prop:ainv_situation_in_cba} and Proposition~\ref{prop:criterion_ainv_in_csa} we get the following consequence.
	
	\begin{corollary}\label{HatAppinv-AppinvCo}
		Let $\A$ be a non-unital abelian Banach algebra with an approximate identity such that $\Characters{\A}\neq\emptyset$. Then  $\hat{x}\in\AppInv(C_{0}(\Characters{\A}))$ for every $x\in\AppInv(\A).$
	\end{corollary}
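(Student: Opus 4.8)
The plan is to chain together the two criteria already established: part~(c) of Proposition~\ref{prop:ainv_situation_in_cba}, which supplies a necessary condition for approximate invertibility in $\A$ phrased in terms of the non-vanishing of the Gelfand transform, and Proposition~\ref{prop:criterion_ainv_in_csa}, which says that in $C_0(X)$ non-vanishing is in fact \emph{equivalent} to approximate invertibility. The corollary is then nothing more than the observation that these two statements hook up precisely when $X=\Characters{\A}$.

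First I would recall, from the Gelfand theory of non-unital abelian Banach algebras invoked in the text preceding Proposition~\ref{prop:aid_converges_pointwisely_to_one}, that $\Characters{\A}$ equipped with the relative weak-* topology is a locally compact Hausdorff space, and that the Gelfand transform $x\mapsto\hat{x}$ maps $\A$ into $C_0(\Characters{\A})$. By hypothesis $\Characters{\A}\ne\emptyset$, so $\Characters{\A}$ is a non-empty locally compact Hausdorff space; consequently, for any $x\in\A$ the function $\hat{x}$ is a bona fide element of $C_0(\Characters{\A})$, and Proposition~\ref{prop:criterion_ainv_in_csa} applies with $X=\Characters{\A}$ and $f=\hat{x}$.

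Next, fix $x\in\AppInv(\A)$. By Proposition~\ref{prop:ainv_situation_in_cba}(c) we have $\hat{x}(\varphi)\ne0$ for every $\varphi\in\Characters{\A}$; equivalently, $\hat{x}$ does not vanish at any point of the locally compact Hausdorff space $\Characters{\A}$. Invoking the implication (iii)$\Rightarrow$(i) of Proposition~\ref{prop:criterion_ainv_in_csa} then gives $\hat{x}\in\AppInv(C_0(\Characters{\A}))$, which is exactly the desired conclusion.

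Since the argument is a direct composition of two previously proven facts, I do not anticipate a genuine obstacle. The only point demanding care is the verification of the standing hypotheses of Proposition~\ref{prop:criterion_ainv_in_csa}, namely that $\Characters{\A}$ is a non-empty locally compact Hausdorff space; this is ensured by the assumption $\Characters{\A}\ne\emptyset$ together with the general Gelfand theory for non-unital abelian Banach algebras. It is worth emphasising that the corollary only asserts the forward direction $\hat{x}\in\AppInv(C_0(\Characters{\A}))$: the converse need not hold, since Proposition~\ref{prop:ainv_situation_in_cba}(c) provides merely a necessary condition in a general abelian Banach algebra, as flagged in the remark following that proposition.
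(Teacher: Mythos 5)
Your proof is correct and follows exactly the route the paper intends: the corollary is stated there as an immediate consequence of Proposition~\ref{prop:ainv_situation_in_cba}(c) combined with the implication (iii)$\Rightarrow$(i) of Proposition~\ref{prop:criterion_ainv_in_csa} applied to $X=\Characters{\A}$. Your additional care in checking that $\Characters{\A}$ is a non-empty locally compact Hausdorff space and that $\hat{x}\in C_0(\Characters{\A})$ is exactly the right hypothesis verification, which the paper leaves implicit.
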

	Next, we give a sufficient condition for the set of approximately invertible elements in a non-unital abelian Banach algebra with an approximate identity under which its interior is empty.
	
	\begin{corollary}\label{coro-open-int-emp}
		Let $\A$ be a non-unital abelian Banach algebra with an approximate identity. If the Gelfand transform from $\A$ into $C_{0}(\Characters{\A})$ is an open mapping, then the interior of  $\AppInv(\A)$ is empty.
	\end{corollary}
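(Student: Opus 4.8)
The plan is to transport the question along the Gelfand transform $\Gamma\colon\A\to C_{0}(\Characters{\A})$, $\Gamma(x)=\hat{x}$, and to reduce it to the commutative $C^{*}$-picture already settled in Proposition~\ref{app-inv-C0T}. I would argue by contradiction: suppose $\AppInv(\A)$ has nonempty interior, so that some $x\in\AppInv(\A)$ admits a ball $B(x,\delta)\subseteq\AppInv(\A)$. The openness hypothesis on $\Gamma$ is exactly what is needed to push such a neighbourhood forward into the target algebra.

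The key step is the transfer itself. Since $\Gamma$ is an open mapping, $\Gamma(B(x,\delta))$ is an open neighbourhood of $\hat{x}$ in $C_{0}(\Characters{\A})$. By Corollary~\ref{HatAppinv-AppinvCo} we have $\Gamma\bigl(\AppInv(\A)\bigr)\subseteq\AppInv(C_{0}(\Characters{\A}))$, hence $\Gamma(B(x,\delta))\subseteq\AppInv(C_{0}(\Characters{\A}))$. Consequently $\hat{x}$ is an interior point of $\AppInv(C_{0}(\Characters{\A}))$, that is, the set of approximately invertible elements of $C_{0}(\Characters{\A})$ has nonempty interior. As long as $\Characters{\A}$ is non-compact, this is precisely what Proposition~\ref{app-inv-C0T} forbids, giving the desired contradiction; equivalently, one may read the same mechanism concretely, using Proposition~\ref{prop:ainv_situation_in_cba}(c) and Proposition~\ref{prop:criterion_ainv_in_csa}, by finding inside the open set $\Gamma(B(x,\delta))$ a function $\hat{y}=\Gamma(y)$ that vanishes at some $\varphi\in\Characters{\A}$, which forces $y\notin\AppInv(\A)$ and contradicts $B(x,\delta)\subseteq\AppInv(\A)$. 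In either reading the non-compact case is immediate.

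The hard part is the remaining possibility that $\Characters{\A}$ is compact (or empty). There $C_{0}(\Characters{\A})=C(\Characters{\A})$ is unital, its approximately invertible elements coincide with the nowhere-vanishing functions, which form an \emph{open} set, so Proposition~\ref{app-inv-C0T} no longer applies and the perturbation argument of the previous paragraph stalls. One structural fact I would use here is that an open linear map is automatically surjective (its range is an open subspace, hence all of $C_{0}(\Characters{\A})$), so $\Gamma$ is a quotient map and $\A/\ker\Gamma\cong C(\Characters{\A})$ is unital; moreover the image of any approximate identity of $\A$ is an approximate identity of this unital quotient and therefore converges to its unit. The genuinely delicate point is that compactness of $\Characters{\A}$ does not by itself force $\A$ to be unital — the radical $\ker\Gamma$ may obstruct lifting the unit — so one cannot simply excise this case by a spectral argument. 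I would therefore handle it by analysing approximate invertibility modulo $\ker\Gamma$ directly, showing that within this case the constraint imposed on elements by the radical part again prevents $\AppInv(\A)$ from containing a ball; isolating the precise hypothesis under which this last implication holds is the main obstacle of the proof.
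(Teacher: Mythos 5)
Your first two paragraphs are precisely the paper's proof: openness of $\Gamma$ gives $\Gamma\left(\operatorname{Int}(\AppInv(\A))\right)\subseteq\operatorname{Int}\left(\Gamma(\AppInv(\A))\right)$, Corollary~\ref{HatAppinv-AppinvCo} gives $\Gamma(\AppInv(\A))\subseteq\AppInv(C_{0}(\Characters{\A}))$, and Proposition~\ref{app-inv-C0T} concludes. So on the case it covers, your argument and the paper's coincide, step for step.

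The case you isolate in your last paragraph --- $\Characters{\A}$ compact or empty --- is a genuine lacuna, but note that it is a lacuna in the \emph{paper's} proof rather than a defect peculiar to your attempt: the published argument silently invokes Proposition~\ref{app-inv-C0T}, which assumes $\Characters{\A}$ non-compact, and Corollary~\ref{HatAppinv-AppinvCo}, which assumes $\Characters{\A}\neq\emptyset$, without verifying either. Moreover, the case is non-vacuous under the corollary's hypotheses. Take $\A=\mathbb{C}\oplus\V$, the algebra direct sum of $\mathbb{C}$ and the Volterra algebra $\V=L^{1}[0,1]$ with truncated convolution, so $(\lambda,f)(\mu,g)=(\lambda\mu,f\ast g)$. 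Then $\A$ is abelian and non-unital (the summand $\V$ is radical, hence has no unit), the net $\bigl((1,e_{n})\bigr)_{n}$ with $e_{n}=n\chi_{[0,1/n]}$ is a bounded approximate identity, $\Characters{\A}$ is a single point (the radical summand carries no characters, so the only character is $(\lambda,f)\mapsto\lambda$), and the Gelfand transform is open. Here $\AppInv(C_{0}(\Characters{\A}))=\mathbb{C}\setminus\{0\}$ has nonempty interior, so the paper's mechanism yields nothing; the conclusion nevertheless holds for this $\A$, by a direct argument on the radical side: by Theorem~\ref{thm:dense_ideals_implies_ainv} and the elementary support inclusion $\support(f\ast g)\subseteq\support f+\support g$, membership $(\lambda,f)\in\AppInv(\A)$ forces the essential infimum of $\support f$ to be $0$, a condition destroyed by the arbitrarily small perturbation $f\mapsto f\chi_{[\eps,1]}$ --- which is exactly the kind of ``approximate invertibility modulo $\ker\Gamma$'' analysis you sketch but do not complete. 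In short: where the paper proves something, you prove the same thing the same way; where you stall, the paper stalls too, only without noticing.
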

	\begin{proof} Let us denote by $\operatorname{Int}B$ and  $\Gamma(x)=\hat{x}$ the interior of the set $B$ and
		the Gelfand transform from $\A$ into $C_{0}(\Characters{\A})$, respectively. If $\Gamma$ is an open mapping, then  $\Gamma\left(\operatorname{Int}(\AppInv(\A))\right)\subset \operatorname{Int}\left(\Gamma(\AppInv(\A))\right)$. Therefore,  by Corollary \ref{HatAppinv-AppinvCo} we have $\Gamma(\AppInv(\A))\subset \AppInv(C_{0}(\Characters{\A}))$ and hence\linebreak $\operatorname{Int}\left(\Gamma(\AppInv(\A))\right)\subset \operatorname{Int}(\AppInv(C_{0}(\Characters{\A})))$. Now, by Proposition \ref{app-inv-C0T} the proof is completed.
	\end{proof}
	
	\begin{remark}\label{rem-conse-int-empty}
		Let $\A$ be a non-unital abelian Banach algebra with an approximate identity.  If the set $\widehat{\A}=\{\hat{x}\colon x\in\A\}$ is a closed subspace of $C_{0}(\Characters{\A})$ with $x\mapsto\hat{x}$ being the Gelfand transform, then by Open Mapping Theorem such  transform is open and hence by Corollary \ref{coro-open-int-emp} the set $\AppInv(\A)$ has empty interior.
	\end{remark}

	\subsection{Involutive algebras, homomorphisms and representations}
	
	It is very natural to work with abstract algebras by means of homomorphisms with other well-known algebras or by their representations on $\BH$.  
	From now on we consider involutions and norms in $\A$  satisfying \eqref{norminvolution}.
	First we shall prove that the morphisms of normed algebras
	having dense images 
	preserve the bounded approximate right (or left) invertibility.
	
	\begin{proposition}\label{proposition-homomorphism}
		Let $\A, \B$ be normed algebras, and $\pi:\A\rightarrow\B$ be a continuous homomorphism with $\overline{\pi(\A)}=\B$. If $x$ is boundedly approximately right invertible in $\A$,
		then $\pi(x)$ is boundedly  approximately right invertible in $\B$.
	\end{proposition}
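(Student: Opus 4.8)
The goal is to show that a continuous dense-range homomorphism $\pi\colon\A\to\B$ sends a boundedly approximately right invertible element to a boundedly approximately right invertible element. The plan is to transport the witnessing net directly through $\pi$ and check that its image is again a bounded approximate identity in $\B$.

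First I would unpack the hypothesis: since $x$ is boundedly approximately right invertible in $\A$, there is a net $(r_j)_{j\in J}$ in $\A$ such that $(xr_j)_{j\in J}$ is a norm bounded approximate identity in $\A$, say with $\|xr_j\|\le M$ for all $j$. The natural candidate witness in $\B$ is the net $\bigl(\pi(x)\pi(r_j)\bigr)_{j\in J}=\bigl(\pi(xr_j)\bigr)_{j\in J}$, using that $\pi$ is a homomorphism. Boundedness is immediate: $\|\pi(xr_j)\|\le\|\pi\|\,\|xr_j\|\le\|\pi\|\,M$ by continuity of $\pi$, so the image net is uniformly bounded by $\|\pi\|M$.

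The substantive step is to verify that $\bigl(\pi(xr_j)\bigr)_{j\in J}$ is an approximate identity in $\B$, i.e.\ that $\pi(xr_j)\,b\to b$ and $b\,\pi(xr_j)\to b$ for every $b\in\B$. For $b$ in the image $\pi(\A)$, say $b=\pi(z)$, this follows directly: $\pi(xr_j)\pi(z)=\pi(xr_j z)\to\pi(z)$ by continuity of $\pi$ and the fact that $xr_j z\to z$ in $\A$. The main obstacle, and the only place the density assumption $\overline{\pi(\A)}=\B$ is genuinely needed, is passing from the dense subset $\pi(\A)$ to all of $\B$; this is exactly where uniform boundedness of the net pays off. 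The plan is a standard $3\eps$ argument: given $b\in\B$ and $\eps>0$, pick $z\in\A$ with $\|b-\pi(z)\|<\eps$, and then estimate
\[
\|\pi(xr_j)b-b\|
\le\|\pi(xr_j)(b-\pi(z))\|+\|\pi(xr_j)\pi(z)-\pi(z)\|+\|\pi(z)-b\|.
\]
The first term is bounded by $\|\pi(xr_j)\|\,\|b-\pi(z)\|\le\|\pi\|M\eps$ using the uniform bound, the third term is below $\eps$, and the middle term tends to $0$ along $J$ by the image-net computation above; choosing $j$ large makes the whole expression small. The symmetric estimate handles $b\,\pi(xr_j)\to b$. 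Putting these together shows $\bigl(\pi(xr_j)\bigr)_{j\in J}$ is a bounded approximate identity in $\B$, and since it has the form $\bigl(\pi(x)\pi(r_j)\bigr)_{j\in J}$ with $\pi(r_j)\in\B$, the element $\pi(x)$ is boundedly approximately right invertible in $\B$, as claimed. I expect no difficulty beyond correctly invoking the uniform bound to control the tail over the dense subset; completeness of $\B$ is not required here.
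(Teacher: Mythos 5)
Your proof is correct and takes essentially the same route as the paper's: transport the net as $\bigl(\pi(x)\pi(r_j)\bigr)_{j\in J}=\bigl(\pi(xr_j)\bigr)_{j\in J}$, bound it by $\|\pi\|M$, and run the same three-term estimate in which density of $\pi(\A)$ supplies the approximant and the uniform bound controls the term over $b-\pi(z)$. The only cosmetic difference is that the paper separates the trivial case $\pi=0$ because its explicit thresholds divide by $\|\pi\|$, a step your qualitative limit formulation does not need.
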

	
	\begin{proof}
		The case $\pi=0$ is trivial; we assume that $\pi\ne0$.
		Since $x$ is boundedly approximately right invertible, there is a net $(r_{j})_{j\in J}$ in $\A$  such that $(xr_{j})_{j\in J}$ is a approximate identity in $\A$, which is bounded, say by $M>0$.
		We are going to prove that the net $(\pi(x)\pi(r_j))_{j\in J}$
		is a bounded approximate identity in $\B$.
		The boundedness is obvious: $\|\pi(x)\pi(r_j)\|\le M\|\pi\|$
		for every $j\in J$.
		
		Let $v\in\B$ and $\varepsilon>0$.
		By the assumption $\overline{\pi(\A)}=\B$,
		we can find $u\in\A$ such that
		\begin{equation}\label{e1}
			\|\pi(u)-v\|<\frac{\varepsilon}{2(M\|\pi\|+1)}.
		\end{equation}
		Since $(xr_j)_{j\in J}$ is an approximate identity in $\A$,
		there exists $j_{0}$ such that for every $j\succeq j_{0}$
		\begin{equation}\label{e2}
			\|xr_{j}u-u\|<\frac{\varepsilon}{2\|\pi\|}.
		\end{equation}
		Thus, by \eqref{e1} and \eqref{e2}, for every $j\succeq j_0$ we get 
		\begin{align*}
			\|\pi(x)\pi(r_{j})v-v\|
			&\leq\|\pi(xr_{j})\|\,\|v-\pi(u)\|+\|\pi(xr_{j}u-u)\|+\|\pi(u)-v\|\\
			&\leq\,(M\|\pi\|+1)\|v-\pi(u)\|+\|\pi\|\,\|(xr_{j}u-u)\|<\frac{\varepsilon}{2}+\frac{\varepsilon}{2}=\varepsilon.
		\end{align*}
		It can be proved in a similar manner that the net
		$(v\pi(x)\pi(r_j))_{j\in J}$ converges to $v$.
	\end{proof}
	
	\begin{definition}\cite{Arveson}
		Let $\Ho$ be a complex Hilbert space. Every homomorphism $\pi$ of an $*$-algebra $\A$ into the $C^{*}$-algebra $\BH$ such that $\pi(x^{*})=\pi^{*}(x)$ will be called a \emph{representation} of the algebra $\A$ in the space $\Ho$. If $\overline{\pi(\A)\Ho}=\Ho$, the representation $\pi$ is said to be  \emph{non-degenerate}. 
	\end{definition}

	It is known \cite[Theorem 25.10]{Zelazko} that every representation $x\mapsto\pi(x)$ of an $*$-algebra $\A$ is continuous and satisfies $\|\pi(x)\|\leq\|x\|$. Therefore, we can deduce an analogous result to Proposition~\ref{proposition-homomorphism}.
	
	\begin{proposition}\label{representationandAppInvr}
		Let $\Ho$ be a complex Hilbert space, and $\pi$ be a non-degenerate representation of the $*$-algebra $\A$ in $\Ho$. If $x\in\A$ is boundedly approximately right invertible, then
		\begin{enumerate}
			\item [1.] there is a net $(r_{j})_{j\in J}$ in $\A$ such that $(\pi(xr_{j}))_{j\in J}$ converges in the strong operator topology to the identity operator;
			\item [2.] $\overline{\pi(x)\Ho}=\Ho$.
		\end{enumerate}
	\end{proposition}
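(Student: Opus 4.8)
The plan is to obtain both conclusions as direct consequences of the continuity of the representation $\pi$ together with its non-degeneracy, leveraging Proposition~\ref{proposition-homomorphism} applied to the right codomain. First I would observe that the closure $\overline{\pi(\A)}$ is a norm-closed $*$-subalgebra $\B$ of $\BH$, hence a C*-algebra, and that $\pi$, viewed as a map $\A\to\B$, is a continuous homomorphism with dense image by construction. Since $x$ is boundedly approximately right invertible in $\A$, Proposition~\ref{proposition-homomorphism} applies and yields a net $(r_j)_{j\in J}$ in $\A$ such that $(\pi(x)\pi(r_j))_{j\in J}=(\pi(xr_j))_{j\in J}$ is a bounded approximate identity in $\B$. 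This is precisely the net that will deliver both assertions.

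For the first conclusion, I would use the fact that $\B$ is a C*-algebra acting non-degenerately on $\Ho$ (non-degeneracy of $\pi$ gives $\overline{\B\Ho}=\overline{\pi(\A)\Ho}=\Ho$). A bounded approximate identity of a C*-algebra acting non-degenerately converges to the identity operator in the strong operator topology: for $\xi\in\pi(\A)\Ho$, say $\xi=\pi(a)\eta$, one has $\pi(xr_j)\xi=\pi(xr_j)\pi(a)\eta=\pi(xr_ja)\eta\to\pi(a)\eta=\xi$ because $(xr_j)_{j\in J}$ is an approximate identity in $\A$; the general case follows by density of $\pi(\A)\Ho$ in $\Ho$ together with the uniform bound $\|\pi(xr_j)\|\le M\|\pi\|$, via the standard $3\varepsilon$-argument. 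This establishes that $(\pi(xr_j))_{j\in J}\to I$ strongly.

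For the second conclusion, I would argue that the range of $\pi(x)$ is dense. Given any $\xi\in\Ho$, the first part provides $\pi(x)\pi(r_j)\xi=\pi(xr_j)\xi\to\xi$, and each vector $\pi(x)(\pi(r_j)\xi)$ lies in $\Rang(\pi(x))=\pi(x)\Ho$; hence $\xi\in\overline{\pi(x)\Ho}$, giving $\overline{\pi(x)\Ho}=\Ho$.

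The only delicate point is the uniform-boundedness step inside the strong-convergence argument: one must verify that a bounded approximate identity in the abstract C*-algebra $\B$ genuinely acts as a strongly convergent net on $\Ho$, which requires both the operator-norm bound inherited from $M\|\pi\|$ and the density $\overline{\pi(\A)\Ho}=\Ho$ coming from non-degeneracy. Both ingredients are already in hand, so I expect no real obstacle; the proof is essentially an assembly of Proposition~\ref{proposition-homomorphism}, the C*-structure of $\overline{\pi(\A)}$, and a routine approximation estimate.
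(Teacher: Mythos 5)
Your proof is correct and, at its core, it is the same as the paper's: the paper likewise establishes part 1 by the $3\varepsilon$-estimate combining non-degeneracy (choosing $\pi(a_v)u$ close to $v$), the approximate-identity property $\|xr_j a_v-a_v\|\to 0$, and the uniform bound $\|\pi(xr_j)\|\le M$ coming from $\|\pi(y)\|\le\|y\|$, and it obtains part 2 exactly as you do, from $\pi(x)\pi(r_j)u=\pi(xr_j)u\to u$. Your preliminary detour through Proposition~\ref{proposition-homomorphism} and the C*-algebra $\overline{\pi(\A)}$ is harmless but redundant, since your actual verification of strong convergence uses only that $(xr_j)_{j\in J}$ is a bounded approximate identity in $\A$ together with continuity of $\pi$ — which is precisely the paper's direct route.
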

	
	\begin{proof}
		1.  Let $x$ be boundedly approximately right invertible. Then there is a net $(r_{j})_{j\in J}$ in $\A$ such that $\sup_{j}\|xr_{j}\|\leq M$ for some $M>0$ and $(xr_{j})_{j\in J}$ is an approximation of the identity in $\A$.
		
		Let  $v\in\Ho$ and $\varepsilon>0$.  Then  we can find $u\in\Ho$ and $a_{v}\in\A$ such that \begin{equation}\label{1piav}
			\|\pi(a_{v})u-v\|<\dfrac{\varepsilon}{2(M+1)}
		\end{equation}
		because the representation is non-degenerate.
		Now, for  $a_{v}\in\A$ and   $\varepsilon>0$, there exists $j_{0}=j_{0}(a_{v},\varepsilon)$  such that 
		\begin{equation}\label{2piav}
			\|xr_{j}a_{v}-a_{v}\|<\dfrac{\varepsilon}{2\|u\|}\quad\text{for all}\ j\succeq j_{0}.
		\end{equation}
		Therefore, by \eqref{1piav} and \eqref{2piav} for all $j\succeq j_{0}$ we have 
		\begin{align*}
			\|\pi(xr_{j})v-v\|
			&\leq\left(\|\pi(xr_{j})\|+1\right) \|v-\pi(a_{v})u\|+\|\pi(xr_{j}a_{v}-a_{v})u\|\\
			&\leq(M+1)\|v-\pi(a_{v})u\|+\|xr_{j}a_{v}-a_{v}\|\,\|u\|<\varepsilon.
		\end{align*}
		Thus, $(\pi(xr_{j}))_{j\in J}$ converges to $\mathrm{Id}_{\Ho}$ in strong operator topology.
		
		2.  Let $(r_{j})_{j\in J}$ be the net from the part (a).
		Hence, given $u\in\Ho$, we define a net $(y_j)_{j\in J}$ 
		in $\pi(x)\Ho$ by $y_j=\pi(r_j)u$
		and obtain
		$\lim_{j\in J}\pi(x)y_{j}=\lim_{j\in J}\pi(x)\pi(r_{j})u=\lim_{j\in J}\pi(xr_{j})u=u.$
		That shows that $\overline{\pi(x)\Ho}=\Ho$.
	\end{proof}
	
	\begin{corollary}
		Let $\A$ be a $*$-algebra and $\pi$ be a non-degenerate
		representation of $\A$ in a complex Hilbert space $\Ho$.
		If $x\in\A$ is boundedly approximately left invertible, then $\pi(x)$ is injective.
	\end{corollary}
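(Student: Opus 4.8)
The plan is to reduce the statement to the density result already proved for the adjoint element and then to invoke the standard Hilbert-space duality between kernel and closed range. The whole argument is a short chain of results established earlier in the paper together with one elementary fact about bounded operators.

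First I would pass from the left invertibility of $x$ to the right invertibility of $x^{*}$. By Proposition~\ref{AppInvr-AppInvl}, applied to $x^{*}$ and using $(x^{*})^{*}=x$, an element is approximately left invertible precisely when its adjoint is approximately right invertible. Moreover, since the involution is isometric by \eqref{norminvolution}, the norm bound on the approximate identity transfers as well: if $(l_{j})_{j\in J}$ witnesses the bounded approximate left invertibility of $x$, so that $(l_{j}x)_{j\in J}$ is a bounded approximate identity, then the net $(x^{*}l_{j}^{*})_{j\in J}=((l_{j}x)^{*})_{j\in J}$ is again an approximate identity (by taking adjoints, exactly as in the proof of Proposition~\ref{AppInvr-AppInvl}), and $\|x^{*}l_{j}^{*}\|=\|l_{j}x\|$ keeps it bounded. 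Hence $x^{*}$ is boundedly approximately right invertible.

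Next I would apply Proposition~\ref{representationandAppInvr}, part 2, to the element $x^{*}$ and the non-degenerate representation $\pi$. Since $x^{*}$ is boundedly approximately right invertible, that proposition gives $\overline{\pi(x^{*})\Ho}=\Ho$. Because $\pi$ is a representation of a $*$-algebra, we have $\pi(x^{*})=\pi(x)^{*}$, so this asserts precisely that the range of the bounded operator $\pi(x)^{*}$ is dense in $\Ho$.

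Finally I would conclude with the identity $\ker T=(\Rang(T^{*}))^{\perp}$, valid for every bounded operator $T$ on a Hilbert space. Taking $T=\pi(x)$ yields $\ker\pi(x)=(\Rang(\pi(x)^{*}))^{\perp}=\bigl(\overline{\Rang(\pi(x)^{*})}\bigr)^{\perp}=\Ho^{\perp}=\{0\}$, so $\pi(x)$ is injective. I do not expect a genuine obstacle here; the only point deserving a line of care is verifying that the bound on the approximate identity survives the passage to adjoints, which it does precisely because the involution is isometric.
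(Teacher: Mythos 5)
Your proposal is correct and follows essentially the same route as the paper: reduce via Proposition~\ref{AppInvr-AppInvl} and the isometry~\eqref{norminvolution} to the bounded approximate right invertibility of $x^{*}$, apply Proposition~\ref{representationandAppInvr} to get $\overline{\pi(x^{*})\Ho}=\Ho$, and conclude by $\ker\pi(x)=\bigl(\Rang(\pi(x)^{*})\bigr)^{\perp}$. Your explicit verification that the norm bound survives the passage to adjoints is a welcome extra line of care that the paper leaves implicit.
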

	\begin{proof}
		If $x\in\A$ is boundedly approximately left invertible, then by \eqref{norminvolution} and Propositions \ref{AppInvr-AppInvl}, \ref{representationandAppInvr} we get that $x^{*}$ is boundedly approximately right invertible in $\A$, and $\overline{\pi(x^{*})\Ho}=\Ho$. Moreover, 
		$\left[\ker\,\pi(x)\right]^{\perp}=\overline{\operatorname{Rang}\pi^{*}(x)}=\overline{\pi(x^{*})\Ho}=\Ho,$
		so $\ker\,\pi(x)=\{0\}$. Thus, $\pi(x)$ is injective.
	\end{proof}

	\section{Approximate invertibility in examples}\label{Sec:Examp}
	
	In this section we provide several examples of algebras with as well as without approximatively invertible elements. These algebras were a motivation for investigating approximate invertibility in some classes of algebras studied in Section~\ref{sec:AppInv in algebras}.
	
	\subsection{Small disk algebra}\label{Section:SmallDiskAlgebra}

	Denote by $\DiskAlgebra$ the usual disk algebra
	consisting of all continuous functions
	$f\colon\overline{\mathbb{D}}\to\mathbb{C}$
	which are analytic in $\mathbb{D}$.
	Consider the ``small disk algebra'' defined as
	\[
	\SmallDiskAlgebra\eqdef \{f\in\DiskAlgebra\colon\ f(0)=0\}.
	\]
	
	Since the character space of $\DiskAlgebra$
	can be naturally identified with $\overline{\mathbb{D}}$
	and $\SmallDiskAlgebra$ is the maximal ideal of $\DiskAlgebra$
	associated to the point $0$,
	we can conclude by~\cite[Theorem 7.3.1]{Larsen}
	that $\M_{\SmallDiskAlgebra}$ can be naturally identified with
	$\overline{\Disk}\setminus\{0\}$.
	
	Notice that $\SmallDiskAlgebra$ is generated by the monomial function
	$\monomial\colon\overline{\mathbb{D}}\to\mathbb{C}$ such that
	$\monomial(z)=z$. 
	By Schwarz lemma, for every $f\in\A_0$ and every $z\in\mathbb{D}$ it holds
	$|f(z)|\le |z|\,\|f\|_\infty.
	$ The following lemma shows that the elements of $\A_0$
	can not be uniformly close
	to the constant $1$ in the annulus $1/2\le|z|\le 1$.
	
	\begin{lemma}\label{lem:small_disk_algebra_far_from_unity}
		If $f\in\A_0$, then 
		\begin{equation}\label{eq:small_disk_algebra_far_from_unity}
			\sup_{1/2\le|z|\le1}|f(z)-1|\ge\frac{1}{3}.
		\end{equation}
	\end{lemma}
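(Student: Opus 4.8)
The plan is to argue by contradiction, combining the Schwarz-type bound $|f(z)|\le|z|\,\|f\|_\infty$ recalled just above the statement with the elementary observation that closeness to $1$ forces $|f|$ to be bounded below. Assume, contrary to \eqref{eq:small_disk_algebra_far_from_unity}, that $\sup_{1/2\le|z|\le1}|f(z)-1|<\tfrac13$. The whole argument then consists of estimating $|f|$ on the two boundary circles of the annulus and exhibiting an incompatibility.

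First I would control the sup-norm of $f$ by means of the outer circle. Since the annulus $\{1/2\le|z|\le1\}$ contains $\{|z|=1\}$, the assumption gives $|f(z)|\le|f(z)-1|+1<\tfrac43$ for every $z$ with $|z|=1$. As $f$ is continuous on $\overline{\mathbb{D}}$ and analytic inside, the maximum modulus principle yields $\|f\|_\infty=\max_{|z|=1}|f(z)|\le\tfrac43$.

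Next I would evaluate on the inner circle $|z|=1/2$, where the two competing bounds collide. On one hand, the Schwarz bound gives $|f(z)|\le|z|\,\|f\|_\infty=\tfrac12\cdot\tfrac43=\tfrac23$ for every such $z$. On the other hand, the contradiction hypothesis gives $|f(z)|\ge1-|f(z)-1|>1-\tfrac13=\tfrac23$ at the same points. These two estimates cannot hold simultaneously, and this contradiction establishes \eqref{eq:small_disk_algebra_far_from_unity}.

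There is no genuine obstacle here; the proof is a clean two-sided estimate. The only point demanding care is checking that the constants match exactly: the factor $|z|=\tfrac12$ supplied by Schwarz, multiplied by the bound $\tfrac43$ on $\|f\|_\infty$, produces precisely the threshold $\tfrac23$ that the lower bound coming from the hypothesis must strictly exceed. This balance is exactly what pins down the constant $\tfrac13$ in \eqref{eq:small_disk_algebra_far_from_unity} and explains the choice of the radius $\tfrac12$ delimiting the annulus.
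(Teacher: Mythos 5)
Your proof is correct and uses exactly the same ingredients as the paper's: the maximum modulus principle to bound $\|f\|_\infty$ by $4/3$ via the outer circle, and the Schwarz bound $|f(z)|\le|z|\,\|f\|_\infty$ at $|z|=1/2$ to force $|f(1/2)|\le 2/3$, with the same constants throughout. The only difference is presentational --- you argue by contradiction where the paper splits into the cases $\|f\|_\infty\ge 4/3$ and $\|f\|_\infty<4/3$, which is the same argument in contrapositive form.
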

	
	\begin{proof}
		Denote $\|f\|_\infty$ by $M$. Note that
		$M=\sup_{|z|=1}|f(z)|.$ If $M\ge 4/3$, then $\displaystyle
		\sup_{1/2\le|z|\le1}\hspn|f(z)-1|
		\ge\hspn M-1\hspn \ge\frac{1}{3}.$
		If $M<4/3$, then by Schwarz lemma
		$|f(1/2)|\le M/2 < 2/3$,
		and $\displaystyle
		\sup_{1/2\le|z|\le1}|f(z)-1|
		\ge 1-|f(1/2)|\ge \frac{1}{3}.$
		In both cases the inequality \eqref{eq:small_disk_algebra_far_from_unity} holds.
	\end{proof}
	
	In the next proposition we show that $\chi_{1}$
	can not be approximated by products
	of two (or more) elements of $\SmallDiskAlgebra$.
	
	\begin{lemma}\label{lem:monomial_can_not_be_approximated}
		For every $f_1,f_2\in\SmallDiskAlgebra$ it holds
		$\|f_1 f_2 - \monomial\|_\infty \ge \frac{1}{3}.
		$
	\end{lemma}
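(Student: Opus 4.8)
The plan is to exploit the fact that a product of two elements of $\SmallDiskAlgebra$ vanishes to order at least two at the origin, which pushes it uniformly away from the first-order monomial $\monomial$ already on the unit circle. Concretely, since $f_1(0)=f_2(0)=0$, the product $f_1f_2$ has a zero of order at least $2$ at $0$; hence the quotient $\psi\eqdef f_1f_2/\monomial$ has a removable singularity at the origin and extends to an element of $\DiskAlgebra$ with $\psi(0)=0$, so in fact $\psi\in\SmallDiskAlgebra$ and $f_1f_2=\monomial\,\psi$ on all of $\overline{\Disk}$.

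Next I would pass to the boundary. For $|z|=1$ we have $|f_1(z)f_2(z)-z|=|z|\,|\psi(z)-1|=|\psi(z)-1|$, so it suffices to bound $\max_{|z|=1}|\psi(z)-1|$ from below. The key input is the mean value property for the disc algebra: since $\psi$ is continuous on $\overline{\Disk}$ and analytic inside,
$$\frac{1}{2\pi}\int_{0}^{2\pi}\psi(e^{i\theta})\,\mathrm{d}\theta=\psi(0)=0.$$
Subtracting the constant $1$ and using the triangle inequality gives
$$\max_{|z|=1}|\psi(z)-1|\ \ge\ \frac{1}{2\pi}\int_{0}^{2\pi}|\psi(e^{i\theta})-1|\,\mathrm{d}\theta\ \ge\ \Bigl|\frac{1}{2\pi}\int_{0}^{2\pi}\bigl(\psi(e^{i\theta})-1\bigr)\,\mathrm{d}\theta\Bigr|=1.$$
Therefore $\|f_1f_2-\monomial\|_\infty\ge\max_{|z|=1}|f_1(z)f_2(z)-z|\ge 1\ge\frac13$, which is in fact stronger than the stated bound.

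The one subtlety I would check carefully is the extension of $\psi$: the order-of-vanishing count ($\ge 2$) is exactly what makes $f_1f_2/\monomial$ continuous up to and across $0$, so that the mean value property is legitimately applicable. I would deliberately \emph{avoid} the tempting reduction through Lemma~\ref{lem:small_disk_algebra_far_from_unity} applied to $\psi$: although $\psi\in\SmallDiskAlgebra$, that lemma only controls $|\psi-1|$ on the annulus $1/2\le|z|\le1$, and the radial weight $|z|$ there is as small as $1/2$, so it would yield merely $\|f_1f_2-\monomial\|_\infty\ge\frac16$, short of $\frac13$. The decisive feature, present for a double zero but absent for a single factor, is precisely that $\psi(0)=0$, and the boundary mean value is what detects it. This is the step I expect to be the real content; everything else is the routine removable-singularity verification and the elementary boundary estimate.
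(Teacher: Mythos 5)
Your proof is correct, and it actually establishes the sharp bound $\|f_1f_2-\monomial\|_\infty\ge 1$ (attained when $f_1f_2=0$), which is stronger than the stated $\tfrac13$. The route, however, genuinely differs from the paper's at the decisive step. The preparatory move is the same: the paper writes $f_1(z)f_2(z)=z^2h(z)$ with $h$ analytic on $\Disk$ and continuous on $\overline{\Disk}$, which is exactly your removable-singularity extension with $\psi=\monomial h\in\SmallDiskAlgebra$, and both proofs pass to the unit circle, where $|f_1(z)f_2(z)-z|=|\psi(z)-1|$. From there the paper uses the maximum modulus principle to identify $\sup_{|z|=1}|\psi(z)-1|$ with $\sup_{z\in\overline{\Disk}}|\psi(z)-1|$ and then invokes Lemma~\ref{lem:small_disk_algebra_far_from_unity}, whose proof is a Schwarz-lemma case analysis on the annulus $\tfrac12\le|z|\le1$; you instead use the mean value property $\frac{1}{2\pi}\int_0^{2\pi}\psi(e^{i\theta})\,\dif\theta=\psi(0)=0$ (valid for disk-algebra functions by letting $r\to1^-$ under uniform continuity), which immediately forces $\max_{|z|=1}|\psi(z)-1|\ge1$. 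Your approach buys the optimal constant and makes Lemma~\ref{lem:small_disk_algebra_far_from_unity} superfluous, since the paper uses it only here and Proposition~\ref{prop:A0_is_poor} already follows from the present lemma; the paper's approach keeps the whole argument at the level of sup-norm estimates and the Schwarz lemma, with no integration. One side remark of yours is inaccurate, though it does not affect your proof: the reduction through Lemma~\ref{lem:small_disk_algebra_far_from_unity} does not lose the radial factor $|z|\ge\tfrac12$ and does not stop at $\tfrac16$, because the paper discards the factor $|z|$ on the circle, where $|z|=1$, \emph{before} spreading the estimate from the boundary back to the annulus via the maximum modulus principle; so the Lemma route really does deliver $\tfrac13$.
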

	
	\begin{proof}
		Since $\SmallDiskAlgebra$ is generated by $\monomial$,
		we can write $f_1(z)f_2(z)$ as $z^2 h(z)$
		with some function $h$ analytic on $\mathbb{D}$
		and continuous on $\overline{\mathbb{D}}$.
		Therefore, by Lemma \ref{lem:small_disk_algebra_far_from_unity} we get
		\begin{align*}
			\|f_1 f_2 - \monomial\|_\infty&=\sup_{|z|=1}\left|z^{2}h(z)-z\right|=\sup_{z\in \overline{\mathbb{D}}}\left|zh(z)-1\right|\geq \sup_{1/2\leq|z|\leq1}\left|zh(z)-1\right|\geq\frac{1}{3} 
		\end{align*}providing the result.
	\end{proof}
	
	As a corollary
	of Lemma~\ref{lem:monomial_can_not_be_approximated} we immediately have the main properties of the small disk algebra.
	
	\begin{proposition}\label{prop:A0_is_poor}
		The algebra $\SmallDiskAlgebra$ has no approximate identities,
		no approximately invertible elements, and no dense principal ideals.
	\end{proposition}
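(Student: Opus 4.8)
The plan is to deduce all three assertions directly from Lemma~\ref{lem:monomial_can_not_be_approximated}, whose content is precisely that the generator $\monomial$ stays at distance at least $1/3$ from the set of all products $f_1 f_2$ with $f_1,f_2\in\SmallDiskAlgebra$. The one extra observation needed is that $\monomial$ itself lies in $\SmallDiskAlgebra$ (indeed $\monomial(0)=0$), so that multiplying any element of $\SmallDiskAlgebra$ by $\monomial$ again produces a product of two elements of $\SmallDiskAlgebra$, to which the lemma applies.

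First I would rule out approximate identities. Suppose toward a contradiction that $(\aid_j)_{j\in J}$ is an approximate identity in $\SmallDiskAlgebra$. Testing the defining relation against $x=\monomial$ gives $\aid_j\,\monomial\to\monomial$ in $\|\cdot\|_\infty$. But $\aid_j,\monomial\in\SmallDiskAlgebra$, so each $\aid_j\,\monomial$ is a product of two elements of the small disk algebra, whence $\|\aid_j\,\monomial-\monomial\|_\infty\ge 1/3$ for every $j$ by Lemma~\ref{lem:monomial_can_not_be_approximated}. This contradicts $\aid_j\,\monomial\to\monomial$, so no approximate identity exists.

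The absence of approximately invertible elements is then immediate from Definition~\ref{def:ApproInv}: were some $x$ approximately (right) invertible, the associated net $(x r_j)_{j\in J}$ would by definition be an approximate identity in $\SmallDiskAlgebra$, which we have just excluded. Finally, to see that no principal ideal $x\SmallDiskAlgebra$ is dense, I would fix an arbitrary $x\in\SmallDiskAlgebra$ and note that every element of $x\SmallDiskAlgebra$ has the form $xf$ with $f\in\SmallDiskAlgebra$, i.e.\ is again a product of two elements of $\SmallDiskAlgebra$. Lemma~\ref{lem:monomial_can_not_be_approximated} then yields $\|xf-\monomial\|_\infty\ge 1/3$ for all $f$, so the distance from $\monomial$ to $x\SmallDiskAlgebra$ is at least $1/3>0$ and $x\SmallDiskAlgebra$ cannot be dense.

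I do not expect a genuine obstacle here: the real work was carried out in Lemma~\ref{lem:small_disk_algebra_far_from_unity} and Lemma~\ref{lem:monomial_can_not_be_approximated} through the Schwarz lemma, and the present proposition is essentially a repackaging of those estimates. The only point requiring a little care is to ensure that in each of the three arguments the relevant element is exhibited as a product of \emph{two} members of $\SmallDiskAlgebra$ (rather than allowing a factor coming from a unitization or an external identity), so that the hypothesis of Lemma~\ref{lem:monomial_can_not_be_approximated} is genuinely met; this is guaranteed by the membership $\monomial\in\SmallDiskAlgebra$ noted at the outset.
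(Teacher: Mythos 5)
Your proof is correct and takes exactly the paper's route: the paper states the proposition as an immediate corollary of Lemma~\ref{lem:monomial_can_not_be_approximated}, and your three applications of that lemma (to $\aid_j\monomial$ for approximate identities, to the net $(xr_j)_{j\in J}$ via Definition~\ref{def:ApproInv} for approximate invertibility, and to $xf$ for density of $x\SmallDiskAlgebra$) are precisely the intended unpacking, using the same notion of principal ideal $x\SmallDiskAlgebra=\Rang(L_x)$ that the paper fixes in Section~\ref{sec:ideals}.
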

	
	The function $\monomial$ does not vanish in any point of $\overline{\Disk}\setminus\{0\}$,
	but the principal ideal $\monomial \SmallDiskAlgebra$ is not dense.
	Thus, $\SmallDiskAlgebra$ provides a counterexample to converse implication in Proposition~\ref{prop:ainv_modid}. Moreover, $\monomial$ is not a topological divisor of zero in $\SmallDiskAlgebra$.
	In fact, $\|f\monomial\|_\infty=\|f\|_\infty$ for every $f\in \SmallDiskAlgebra$. So, in the algebra $\SmallDiskAlgebra$
	there are no approximately invertible elements, but not every element is a topological divisor of zero.

	\subsection{Wiener algebras}

	For a locally compact Hausdorff space $X$, a normed algebra $(W,\|\cdot\|_{W})$ of complex-valued continuous functions on $X$ with pointwise operations is said to be a \textit{Wiener algebra}
	(see Reiter and Stegeman \cite[Chapter 2]{ReiterStegeman}), if
	
	\begin{enumerate}[label=(WA\arabic*),
		labelsep=*,
		leftmargin=*,
		widest=(WA1)]
		
		\item for each $t\in X$,  the evaluation  functional $f\mapsto f(t)$ on $W$ is continuous;
		\label{WA1}
		
		\item for any closed set $E\subsetneq X$ and any point $t\notin E$, there is a function $f\in\,W$ vanishing on $E$ and such that $f(t)\neq0$;
		\label{WA2}
		
		\item if $f\in W$ with $f(t)\neq0$ at a point $t\in X$, then there is a function $g\in\,W$ such that $g(x)=\dfrac{1}{f(x)}$ for all $x$ in some neighborhood of $t$;
		\label{WA3}
		
		\item the compactly supported elements in $W$ form a dense subspace.
		\label{WA4}
		
	\end{enumerate}
	
	In this section, we suppose that $X$ is a locally compact non-compact Hausdorff space,
	$(W,\|\cdot\|_{W})$ is a Wiener algebra on $X$, and, additionally,
	that $W$ has an approximate identity. Due to condition~\ref{WA2}, for each point $t\in X$ there exists a function $f\in W$ such that $f(t)=1$; thus, the kernel of every evaluation functional is a maximal modular ideal. On the other hand, it is known~\cite[Proposition 2.6.1]{ReiterStegeman}
	that maximal closed ideals in $W$ correspond to the evaluation functionals. Therefore,  $\Characters{W}$ can be naturally identified with $X$.

	\begin{theorem}
		\label{thm:Wiener_ainv-Wie-alg}
		Let $f\in W$. Then the following conditions are equivalent:
		\begin{enumerate}
			\item  There exists is a net $(h_j)_{j\in J}$ in $W$
			such that $(f h_j)_{j\in J}$ is an approximate identity in $W$,
			and the functions $fh_j$ have compact supports.
			\item  $f$ is approximately invertible in $W$.
			\item  $fW$ is dense in $W$.
			\item The Gelfand transform $\widehat{f}$ of $f$
			does not vanish on $\Characters{W}$.
			\item $f$ does not vanish on $X$.
		\end{enumerate}
	\end{theorem}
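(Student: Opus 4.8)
The plan is to prove the cycle (i)$\Rightarrow$(ii)$\Rightarrow$(v)$\Rightarrow$(i) together with the two side-equivalences (ii)$\Leftrightarrow$(iii) and (iv)$\Leftrightarrow$(v); once these are in place all five statements are linked. The implication (i)$\Rightarrow$(ii) is immediate: if $(fh_j)_{j\in J}$ is an approximate identity, then $f$ is approximately right invertible by Definition~\ref{def:ApproInv}, and since $W$ is abelian this coincides with approximate invertibility (the compact-support clause is simply dropped here). The equivalence (ii)$\Leftrightarrow$(iii) is Theorem~\ref{thm:dense_ideals_implies_ainv} applied to $W$, which carries an approximate identity by hypothesis. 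For (iv)$\Leftrightarrow$(v) I would use the stated identification of $\Characters{W}$ with $X$ via evaluation functionals $\varphi_t\colon g\mapsto g(t)$: since $\widehat{f}(\varphi_t)=\varphi_t(f)=f(t)$, the Gelfand transform vanishes nowhere on $\Characters{W}$ precisely when $f$ has no zero on $X$.

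For (ii)$\Rightarrow$(v) I would argue by contradiction in the spirit of Proposition~\ref{prop:aid_converges_pointwisely_to_one}, but without invoking completeness. Suppose $f(t_0)=0$ while $(fh_j)_{j\in J}$ is an approximate identity. Using \ref{WA2} choose $x\in W$ with $x(t_0)\ne0$, and recall $\varphi_{t_0}$ is continuous by \ref{WA1}. Then
\[
|x(t_0)|\,|\varphi_{t_0}(fh_j)-1|=|\varphi_{t_0}(x\,fh_j-x)|\le\|\varphi_{t_0}\|\,\|x\,fh_j-x\|_{W}\to0,
\]
so $\varphi_{t_0}(fh_j)\to1$; but $\varphi_{t_0}(fh_j)=f(t_0)h_j(t_0)=0$ for every $j$, a contradiction.

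The substantial step is (v)$\Rightarrow$(i). First I would invoke \ref{WA4} together with \cite[Lemma~1.4]{Doran-Wichmann} to replace the given approximate identity by one, $(e_\lambda)_{\lambda\in\Lambda}$, all of whose members have compact support; write $K_\lambda=\operatorname{supp}(e_\lambda)$. The key auxiliary claim is: for every compact $K\subseteq X$ there is $g\in W$ with $fg=1$ on an open neighbourhood of $K$. To prove it, for each $t\in K$ (where $f(t)\ne0$ by (v)) condition \ref{WA3} supplies $g_t\in W$ and an open $U_t\ni t$ with $fg_t=1$ on $U_t$; extract a finite subcover $U_1,\dots,U_n$ with inverses $g_1,\dots,g_n$ and glue them by the algebraic identity
\[
fg=1-(1-fg_1)(1-fg_2)\cdots(1-fg_n),\qquad g\in W,
\]
built inductively from the two-term formula $g_1+g_2-fg_1g_2$. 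On each $U_i$ one factor vanishes, so $fg=1$ on $U_1\cup\cdots\cup U_n\supseteq K$, and $g\in W$ because $W$ is closed under products and sums. Applying the claim to $K=K_\lambda$ yields $g_\lambda\in W$ with $fg_\lambda=1$ on a neighbourhood of $\operatorname{supp}(e_\lambda)$; setting $h_\lambda:=g_\lambda e_\lambda\in W$ gives $fh_\lambda=(fg_\lambda)e_\lambda=e_\lambda$ pointwise, which has compact support, and $(fh_\lambda)_{\lambda}=(e_\lambda)_\lambda$ is an approximate identity, which is exactly (i).

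The hard part will be the gluing claim, and the point I would stress is that it can be handled purely algebraically through $1-\prod_i(1-fg_i)$ rather than through a partition of unity subordinate to $\{U_i\}$, which need not live inside $W$. In this scheme the regularity axioms \ref{WA2}--\ref{WA3} enter only to furnish the local inverses and the separating function, while closure of $W$ under multiplication guarantees that the glued element remains in the algebra; completeness of $W$ is never used.
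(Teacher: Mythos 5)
Your proposal is correct, and for the only substantive implication (v)$\Rightarrow$(i) it follows the same skeleton as the paper: pass to an approximate identity consisting of compactly supported elements (the paper gets this from (WA4) via \cite[Proposition 2.4.2]{ReiterStegeman}, you via \cite[Lemma 1.4]{Doran-Wichmann} -- interchangeable), and then divide $f$ into each member $e_\lambda$. The genuine difference is in how the division is justified: the paper invokes the analogue of Wiener's division lemma for Wiener algebras \cite[Corollary 2.1.11]{ReiterStegeman} as a black box, whereas you reconstruct it from the axioms, using the local inverses supplied by (WA3), compactness, and the algebraic gluing identity $fg=1-\prod_{i=1}^{n}(1-fg_i)$ built from the two-term formula $g_1+g_2-fg_1g_2$; this is sound (each partial product stays of the form $1-fG_k$ with $G_k\in W$, and on $\bigcup_i U_i$ some factor vanishes), and your point that it needs no partition of unity inside $W$ and no completeness is exactly why it works for a mere normed algebra. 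You also spell out the implications the paper dismisses as ``simple'': your direct (ii)$\Rightarrow$(v) via (WA1)--(WA2) correctly sidesteps Proposition~\ref{prop:aid_converges_pointwisely_to_one}, which is stated only for Banach algebras, while (ii)$\Leftrightarrow$(iii) via Theorem~\ref{thm:dense_ideals_implies_ainv} and (iv)$\Leftrightarrow$(v) via the evaluation-functional identification of $\Characters{W}$ with $X$ are as intended. In short: same route and same key mechanism, but self-contained where the paper leans on Reiter--Stegeman, at the modest cost of re-proving a known lemma.
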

	
	\begin{proof}
		We only prove the implication (v)$\Rightarrow$(i),
		since the remaining implications are simple.
		By assumption, $W$ has an approximate identity, hence by~\ref{WA4} and by~\cite[Proposition 2.4.2]{ReiterStegeman} we have that $W$ has an approximate identity $(\aid_{j})_{j\in J}$ where $\aid_{j}$ has compact support for every $j$ in $J$. Suppose that $f(t)\neq0$ for every $t\in X$. Then for every $j$ in $J$,
		by the analogue of Wiener's division lemma for Wiener algebras
		~\cite[Corollary 2.1.11]{ReiterStegeman},
		there exists $g_{j}$ in $W$ such that $f g_{j}=\aid_{j}$.
	\end{proof}

	The following consequence is similar to Proposition~\ref{app-inv-C0T} and Corollary~\ref{coro-open-int-emp}.
	
	\begin{corollary}
		Let $(W,\|\cdot\|_{W})$ be a Wiener algebra with an approximate identity. Then the interior of $ \AppInv(W)$ is empty.
	\end{corollary}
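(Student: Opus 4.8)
The plan is to follow the spirit of Proposition~\ref{app-inv-C0T}, but to replace the sup-norm perturbation used there by a perturbation coming from the approximate identity itself. This change is essential: the Wiener norm $\|\cdot\|_W$ may be strictly stronger than the sup-norm, so a small sup-norm change need not be small in $\|\cdot\|_W$, and the purely pointwise construction of Proposition~\ref{app-inv-C0T} cannot be transplanted verbatim. The guiding observation is that, by Theorem~\ref{thm:Wiener_ainv-Wie-alg}, an element of $W$ fails to be approximately invertible precisely when it vanishes at some point of $X$; equivalently, $\AppInv(W)$ coincides with the set of nowhere-vanishing functions. Hence it suffices to show that the functions possessing a zero form a dense subset of $W$, for then $\AppInv(W)$ is the complement of a dense set and has empty interior.

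First I would invoke the approximate identity $(\aid_j)_{j\in J}$ produced in the proof of Theorem~\ref{thm:Wiener_ainv-Wie-alg}: using~\ref{WA4} together with \cite[Proposition 2.4.2]{ReiterStegeman}, one may assume that each $\aid_j$ has compact support. Fixing an arbitrary $f\in W$, the defining property of an approximate identity gives $\aid_j f\to f$ in the norm $\|\cdot\|_W$, so $f$ lies in the $\|\cdot\|_W$-closure of the set $\{\aid_j f : j\in J\}$.

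Next I would check that each $\aid_j f$ vanishes somewhere. Writing $K_j=\support(\aid_j)$, we have $\support(\aid_j f)\subseteq K_j$, and since $X$ is non-compact the set $X\setminus K_j$ is non-empty; choosing any $t_0\in X\setminus K_j$ yields $(\aid_j f)(t_0)=\aid_j(t_0)f(t_0)=0$. By the equivalence (ii)$\Leftrightarrow$(v) of Theorem~\ref{thm:Wiener_ainv-Wie-alg}, it follows that $\aid_j f\notin\AppInv(W)$ for every $j\in J$.

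Combining the two previous steps, every $f\in W$ is a $\|\cdot\|_W$-limit of elements of $W\setminus\AppInv(W)$; equivalently, $W\setminus\AppInv(W)$ is dense in $W$, and therefore $\AppInv(W)$ contains no non-empty open set. The only real subtlety—the step I would be most careful about—is ensuring that the approximate identity may be taken compactly supported; this is exactly where axiom~\ref{WA4} and the non-compactness of $X$ enter, and it is what allows us to bypass the sup-norm perturbation of Proposition~\ref{app-inv-C0T} and argue entirely within the genuine topology of $W$.
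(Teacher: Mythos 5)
Your proof is correct and is essentially the paper's argument: both rest on the fact that, $X$ being non-compact, every compactly supported element of $W$ vanishes at some point and hence fails to be approximately invertible by Theorem~\ref{thm:Wiener_ainv-Wie-alg}, so that non-invertible elements come arbitrarily close to any $f\in W$ in the norm $\|\cdot\|_{W}$. The only cosmetic difference is that you manufacture the approximating non-invertible elements as $\aid_j f$ via a compactly supported approximate identity (itself obtained from~\ref{WA4}), whereas the paper invokes the density of compactly supported elements from~\ref{WA4} directly to find such elements in any ball $B_{\delta}(f)$.
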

	
	\begin{proof}
		By Theorem~\ref{thm:Wiener_ainv-Wie-alg}, the functions  with compact support in $X$ are not approximately invertible. Thus,  any ball $B_{\delta}(f)$   centered at $f\in W$ with radius $\delta$ contains functions  that are not approximately invertible and hence the interior of $ \AppInv(W)$ is empty.
	\end{proof}

	\begin{example}[the Fourier algebra]
		\label{example:convolution_algebra}
		Let $G$ be a non-discrete locally compact abelian group
		equipped with a Haar measure $\mu$.
		We shall use the additive notation for $G$.
		The space $\Lone(G)$ with the convolution operation $\ast$
		is a non-unital commutative algebra.
		We identify the space $\Characters{\Lone(G)}$
		with the dual group $\widehat{G}$.
		Then the Gelfand transform for the algebra $\Lone(G)$
		coincides with the Fourier transform
		$\mathcal{F}\colon\Lone(G)\to C_0(\widehat{G})$,
		and the image of this transform, i.e., $\mathcal{F}(\Lone(G))$,
		is known as the Fourier algebra
		(some authors call it the Wiener algebra).
		Of course, this is a typical example of a Wiener  algebra, and the previous results of this section can be applied to this example.
		It is well known that $\mathcal{F}(\Lone(G))$
		has norm bounded approximate identities
		(with norms bounded by $1$)
		and that the functions in
		$\mathcal{F}(\Lone(G))$ with compact
		support are dense there
		(see \cite[Theorem 8.1.2, pp. 184--187]{Larsen}
		or \cite[Proposition 5.4.1]{ReiterStegeman}).
		Therefore, by \cite[Lemma 1.4]{Doran-Wichmann},
		$\mathcal{F}(\Lone(G))$
		contains an approximate  identity $(\aid_{j})_{j\in J}$
		such that the functions $\aid_{j}$ have compact supports for each $j$ in $J$.
		For this example,
		the implication (v)$\Rightarrow$(i)
		from Theorem~\ref{thm:Wiener_ainv-Wie-alg}
		can be proved
		by the classical Wiener's Division Lemma~\cite[Lemma 1.4.2]{ReiterStegeman},
		and the implication (v)$\Rightarrow$(iii)
		can also be proved in a different way,
		by~\cite[Theorem 6.1.4]{ReiterStegeman}
		and Proposition~\ref{prop:rich_modular_ideals}.
	\end{example}
	
	\begin{example}[Segal algebras]
		\label{example:Fourier-Segal-algebra}
		Let $G$ be a non-discrete locally compact abelian group
		equipped with a Haar measure $\mu$. As in Example \ref{example:convolution_algebra}, we shall use the additive notation for $G$.
		A subalgebra $S$ of $\Lone(G)$ is said to be a \emph{Segal algebra} if 
		
		\begin{enumerate}[label=(SA\arabic*),
			labelsep=*,
			leftmargin=*,
			widest=(SA1)]
			
			\item
			\label{SA1}
			the space $S$ is dense in $\Lone(G)$ with respect to the norm $\|\cdot\|_{1}$;
			
			\item
			\label{SA2}
			$S$ is invariant under translations, i.e., $L_y f\in S$ for every $f$ in $S$ and $y$ in $G$,
			where $L_{y}f(x)=f(x-y)$;
			
			\item
			\label{SA3}
			$S$ is a Banach algebra under some norm $\|\cdot\|_{S}$ which is invariant under translations;
			
			\item
			\label{SA4}
			for every $f\in S$ and every $\eps>0$,
			there is a neighborhood $U$ of $0$ such that
			$\|L_{y}f-f\|_{S}<\eps$ for all $y$ in $U$.
		\end{enumerate}
		
		Let $W$ be the the Fourier image of $S$
		(i.e., $\mathcal{F} S$),
		considered with the norm carried over from $S$.
		Then $W$ is a subalgebra of $\mathcal{F}(\Lone(G))$,
		and $W$ is isomorphic to $S$.
		Moreover, $W$ is a Wiener algebra and possesses approximate identities,
		see~\cite[Proposition 6.2.8]{ReiterStegeman}.
		If $S$ is a non-trivial Segal algebras, i.e., $S\subsetneq \Lone(G)$, then  the approximate identities in $S$ are  bounded with respect to the $\Lone$-norm but they are never bounded  in the norm $\|\cdot\|_{S}$, see e.g. \cite[Theorem 1.2]{Burnham}. Now,  \cite[Proposition 6.2.6]{ReiterStegeman} implies that elements of $W$ have bounded approximate units, and by 
		\cite[Proposition 9.5]{Doran-Wichmann}
		this implies that $W$ has an approximate identity (possibly unbounded).
	\end{example}
	
	\begin{example}[Beurling algebras]
		This generalization of Example~\ref{example:convolution_algebra}
		was studied by Domar~\cite{Domar1956}.
		Let $G$ be a non-discrete locally compact abelian group equipped with a Haar measure $\mu$,
		and let $w\colon G\to[1,+\infty)$
		be a measurable function with respect to the Haar measure,
		such that $w$ is bounded on every compact set and submultiplicative in the following sense:
		\[
		w(x+y)\le w(x)w(y)\qquad(x,y\in G).
		\]
		Moreover, we suppose that $w$ satisfies the Beurling--Domar condition:
		\[
		\sum_{n\geq1}\dfrac{w(nx)}{n^{2}}<+\infty\qquad (x\in G).
		\]
		Then $\Lone(G, w\,\dif\mu)$,
		considered with the norm $\|\cdot\|_{1,w}$
		and with the convolution operation,
		is a Banach algebra,
		and its Fourier image 
		$W\eqdef\cF \Lone(G,w\,\dif\mu)$
		is a Wiener algebra, see \cite[Proposition 6.3.2 and Lemma~A.1.4]{ReiterStegeman} based on various results from~\cite{Domar1956}.
		Moreover, \cite[Proposition 3.7.6]{ReiterStegeman} implies that elements of $W$ have bounded approximate units, and by 
		\cite[Proposition 9.5]{Doran-Wichmann}
		this implies that $W$ has an approximate identity (possibly unbounded).
	\end{example}

	\subsection{Operator ideals}
	
	In this section we exemplify the approximate invertibility in certain operator algebras. First we recall a few known facts for operators, see for instance \cite[\textsection2.4]{Brezis}, providing a motivation for the detailed study.  A linear operator $T\in\mathcal{B}(\Ho)$ is right invertible if there exists $S\in\mathcal{B}(\Ho)$ such that $TS=\operatorname{Id}$. The existence of a right inverse of  $T\in\mathcal{B}(\Ho)$ is guaranteed if and only if $T$ is surjective. On the other hand, a linear operator $T\in\mathcal{B}(\Ho)$ is left invertible if there exists $S\in\mathcal{B}(\Ho)$ such that $ST=\operatorname{Id}$, and the existence of left inverse of  $T\in\mathcal{B}(\Ho)$ is guaranteed if and only if $T$ is injective  and $\Rang(T)$ is closed (or $T$ is bounded below). These results can be extended to linear operators acting on Banach spaces, but the concept of complemented subspaces is needed, see for instance \cite[Theorem 2.12, Theorem 2.13]{Brezis}.

	In what follows, $\Ho$ is an infinite-dimensional separable Hilbert space.
	Then the identity operator in $\Ho$ is not compact,
	and the compact operators acting on $\Ho$ cannot be neither \textit{bounded below} nor \textit{surjective},
	see~\cite[Theorem 4.18]{Rudin}.
	Thus, compact operators acting on $\Ho$
	are neither right, nor left invertible.
	Thence, finding conditions for a  weaker form of invertibility of compact operators is an important task. We will address this task in a slightly more general setting of operator ideals.
	
	We denote by $\mathfrak{F}(\Ho)$
	the collection of all  operators $F$ in $\BH$ such that $\operatorname{Rank}(F)\eqdef \dim\Rang(F)$ is finite. The elements of $\mathfrak{F}(\Ho)$ are finite linear combinations of $f\otimes g$, where $(f\otimes g)(h)\eqdef \langle h,g\rangle f$.
	
	For $n$ in $\N$, we refer to 
	$
	a_{n}(S)\hspn=\hspn\inf\left\{\,\|S-F\|\colon \hspn\operatorname{Rank}(F)\hspn<n\right\}
	$
	as $n$-th approximation number of $S\in\BH$.
	The C*-algebra $\KH$ of compact operators
	(also known as \emph{completely continuous} operators), acting on $\Ho$, may be characterized as
	$\displaystyle
	\CompactOperators(\Ho)=\left\{S\in\BH\colon\lim_{n\rightarrow\infty}a_{n}(S)=0\right\}.$

	The study of operador ideals was started by Calkin~\cite{Calkin-1941}.
	Following~\cite{Pietsch-2017},
	we say that a subspace $\mathfrak{U}$ of $\BH$
	is an \emph{operator ideal}
	if for any $S\in\BH$ and $T\in\mathfrak{U}$ such that  $a_{n}(S)\leq a_{n}(T)$,  we have $S\in\mathfrak{U}$.
	
	Obviously, this definition implies that $\fU$ is a two-sided ideal.

	In this section, we fix a proper operator ideal $\fU$ on $\Ho$, $\fF(\Ho)\subseteq  \fU\subseteq\CompactOperators(\Ho)$.
	We assume that:
	
	\begin{enumerate}[label=(OI\arabic*),
		labelsep=*,
		leftmargin=*,
		widest=(OI1)]
		
		\item
		\label{OI1}
		$\fU$ is provided with a norm $\|\cdot\|_{\fU}$,
		
		\item
		\label{OI2}
		$\fU$ is complete with respect to the induced uniformity,
		
		\item
		\label{OI3}
		$\fF(\Ho)$ is dense in $\fU$
		with respect to $\|\cdot\|_{\fU}$, and
		
		\item
		\label{OI4}
		the normalization condition
		$\|f\otimes g\|_{\fU}=\|f\|\,\|g\|$
		holds for every $f,g$ in $\Ho$.
	\end{enumerate}
	
	By \cite[Proposition 6.1.4]{Pietsch-1980},
	it follows that 
	$\|A\|\le \|A\|_{\fU}$ for every $A$ in $\fU$. Notice that $\|\cdot\|_{\fU}$ is equivalent to the operator norm $\|\cdot\|$ only if $\fU=\CompactOperators(\Ho)$.
	
	The most important particular cases are the Schatten classes, including the ideal of all compact operators. More generally, Pietsch~\cite[\textsection 6.1]{Pietsch-1980} considered operator ideas provided with quasi-norms, and the results of this section can be extended to that context (treating $\fU$ as a topological algebra).
	
	A fundamental tool for studying the operator ideals in $\BH$ is the singular value decomposition theorem, also known as Schmidt's Theorem (see, for example, \cite[Theorem D.3.2]{Pietsch-1980}).
	According to this theorem, each operator $S\in\CompactOperators(\Ho)$ admits a representation 
	\begin{equation}\label{eq:HSch-opi}
		S\hspace{-2pt}=\sum_{n\in\N}\lambda_{n}  u_{n}\otimes e_{n},
	\end{equation}
	where $\lambda_{n}=a_{n}(S)$, $n\in\N$,
	is a non-increasing sequence converging to zero, 
	and $(u_{n})_{n\in\N}$ and $(e_{n})_{n\in\N}$ are orthonormal sequences in $\Ho$.
	
	First, we characterize the approximate identities in $\fU$ that are bounded in the uniform norm.

	\begin{proposition}\label{prop-sec-appr-id-comp-opi2}
		Let $(S_{\hspn j})_{j\in J}$ be a net in $\fU$ such that $\sup_{j\in J}\|S_{\hspn j}\|<\infty$. Then the following assertions are equivalent: \begin{enumerate}
			\item  $(S_{\hspn j})_{j\in J}$ is an approximate identity in $\fU$; 
			\item  $S_{\hspn j} v\to v$ and $S_{\hspn j}^{*} v\to v$ for every $v\in \Ho$.
		\end{enumerate}
	\end{proposition}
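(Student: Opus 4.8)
The plan is to reduce every $\fU$-norm estimate to a Hilbert-space estimate by testing against rank-one operators $f\otimes g$, on which the normalization axiom \ref{OI4} converts $\|\cdot\|_{\fU}$ into ordinary norms, and only afterwards to pass to general $T\in\fU$ via the density of finite-rank operators \ref{OI3} and the uniform bound $M\eqdef\sup_{j\in J}\|S_{\hspn j}\|<\infty$. The computational backbone is the pair of identities, read off directly from $(f\otimes g)(h)=\langle h,g\rangle f$,
\begin{equation*}
	S_{\hspn j}(f\otimes g)=(S_{\hspn j}f)\otimes g,
	\qquad
	(f\otimes g)S_{\hspn j}=f\otimes(S_{\hspn j}^{*}g).
\end{equation*}
For (i)$\Rightarrow$(ii) I would fix $v\in\Ho$ and a unit vector $g$ and apply the approximate-identity hypothesis to $T=v\otimes g\in\fF(\Ho)\subseteq\fU$: the first identity together with \ref{OI4} gives $\|S_{\hspn j}T-T\|_{\fU}=\|(S_{\hspn j}v-v)\otimes g\|_{\fU}=\|S_{\hspn j}v-v\|$, whence $S_{\hspn j}v\to v$; testing instead with $T=g\otimes v$ and the right-hand identity yields $\|TS_{\hspn j}-T\|_{\fU}=\|S_{\hspn j}^{*}v-v\|\to0$, so $S_{\hspn j}^{*}v\to v$. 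Note this direction uses neither completeness nor the uniform bound.

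For (ii)$\Rightarrow$(i) I would first dispatch finite-rank operators. On a rank-one $T=f\otimes g$ the same two identities give $\|S_{\hspn j}T-T\|_{\fU}=\|S_{\hspn j}f-f\|\,\|g\|$ and $\|TS_{\hspn j}-T\|_{\fU}=\|f\|\,\|S_{\hspn j}^{*}g-g\|$, both tending to $0$ by hypothesis (ii); since $T\mapsto S_{\hspn j}T-T$ and $T\mapsto TS_{\hspn j}-T$ are linear and each $F\in\fF(\Ho)$ is a finite sum of rank-one operators, this extends verbatim to all of $\fF(\Ho)$. Then, for arbitrary $T\in\fU$ and $\eps>0$, I would pick a finite-rank $F$ with $\|T-F\|_{\fU}<\eps$ by \ref{OI3} and split
\begin{equation*}
	\|S_{\hspn j}T-T\|_{\fU}\le\|S_{\hspn j}(T-F)\|_{\fU}+\|S_{\hspn j}F-F\|_{\fU}+\|F-T\|_{\fU},
\end{equation*}
together with the symmetric decomposition for $TS_{\hspn j}-T$.

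The decisive step, and the only one requiring the uniform operator-norm bound, is estimating the first summand $\|S_{\hspn j}(T-F)\|_{\fU}$ uniformly in $j$. Here I would invoke the ideal-norm inequality $\|BA\|_{\fU}\le\|B\|\,\|A\|_{\fU}$ available for a two-sided operator ideal in the sense of \ref{OI1}--\ref{OI4} (for instance because $a_{n}(S_{\hspn j}A)\le\|S_{\hspn j}\|\,a_{n}(A)$ and the $\fU$-norm is monotone in the approximation numbers), which gives $\|S_{\hspn j}(T-F)\|_{\fU}\le M\eps$ for all $j$, and likewise $\|(T-F)S_{\hspn j}\|_{\fU}\le M\eps$. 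Since the middle term tends to $0$ along $J$ and $\|F-T\|_{\fU}<\eps$, we obtain $\limsup_{j}\|S_{\hspn j}T-T\|_{\fU}\le(M+1)\eps$ for every $\eps>0$, hence $\|S_{\hspn j}T-T\|_{\fU}\to0$ and, symmetrically, $\|TS_{\hspn j}-T\|_{\fU}\to0$; that is, $(S_{\hspn j})_{j\in J}$ is an approximate identity in $\fU$. I expect the main obstacle to be purely bookkeeping rather than conceptual: one must make sure the submultiplicativity of $\|\cdot\|_{\fU}$ against the operator norm is genuinely licensed by the standing axioms, which is precisely why the boundedness hypothesis is stated in the uniform norm and not in $\|\cdot\|_{\fU}$.
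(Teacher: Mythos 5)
Your proof is correct and takes essentially the same route as the paper: test against rank-one operators using $S_j(f\otimes g)=(S_jf)\otimes g$ and $(f\otimes g)S_j=f\otimes(S_j^*g)$, extend to $\fF(\Ho)$ by linearity, then pass to general elements of $\fU$ via \ref{OI3} and a three-term split controlled by $M=\sup_j\|S_j\|$ together with the ideal inequality $\|S_jA\|_{\fU}\le\|S_j\|\,\|A\|_{\fU}$, which the paper's own proof uses at the same spot (equally implicitly). The only cosmetic difference is in (i)$\Rightarrow$(ii), where the paper tests against the orthogonal projection $P_v$ and invokes $\|A\|\le\|A\|_{\fU}$, while you obtain an exact equality directly from the normalization \ref{OI4} with $T=v\otimes g$; both are valid.
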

	
	\begin{proof}
		(i)$\Rightarrow$(ii).
		Suppose $(S_{\hspn j})_{j\in J}$ is an approximate identity in $\mathfrak{U}$.
		Let $v\in\Ho$.
		Denote by $P_v$ the orthogonal projection in $\Ho$ such that $P_v(\Ho)=\linspan\{v\}$.
		Then $P_v\in\fF(\Ho)\subseteq\fU$,
		hence the nets $(S_{\hspn j} P_v)_{j\in J}$ and $( P_vS_{\hspn j})_{j\in J}$ converge to $P_v$ in $\fU$.
		Since
		\begin{align*}
			\|S_{\hspn j}v-v\|
			&=\|S_{\hspn j}P_{v}v-P_{v}v\|
			\le \|S_{\hspn j}P_{v}-P_{v}\|\,\|v\|
			\le  \|S_{\hspn j}P_{v}-P_{v}\|_{\fU}\,\|v\|,\\
			\|S_{\hspn j}^{*}v-v\|
			&=\|S_{\hspn j}^{*}P_{v}v-P_{v}v\|
			\le \|S_{\hspn j}^{*}P_{v}-P_{v}\|\,\|v\|=\|P_{v}S_{\hspn j}-P_{v}\|\,\|v\|
			\le  \|P_{v}S_{\hspn j}-P_{v}\|_{\fU}\,\|v\|,
		\end{align*}
		we conclude that $S_{\hspn j} v\to v$ and $S_{\hspn j}^{*} v\to v$.
		
		(ii)$\Rightarrow$(i).
		Let $(S_{\hspn j})_{j\in J}$ be such that $\displaystyle\sup_{j\in J}\|S_{\hspn j}\|<\infty$ and $S_{\hspn j}v\rightarrow v$ for every $v\in \Ho$. Let us show that $\lim_{j\in J}\|S_{\hspn j} T - T\|_{\fU}=0$,  when $T$ is of the form $T\eqdef u\otimes e$, with $u,e\in\Ho$. Then 
		$S_{\hspn j} T =
		(S_{\hspn j} u)\otimes e$ and
		\[
		\|S_{\hspn j} T - T\|_{\fU}
		=\|(S_{\hspn j} u) \otimes e - u \otimes e\|_{\fU}
		=\|(S_{\hspn j} u - u)\otimes e\|_{\fU}
		=\|S_{\hspn j} u - u\|\,\|e\|,
		\]
		and the last expression tends to zero.
		
		If $T\in\fF(\Ho)$, then $T$ is a finite linear combination of operators $ u\otimes e$, and $\displaystyle\lim_{j}\|S_{\hspn j} T - T\|_{\fU}=0$. 
		
		Let  $C\in\mathfrak{U}$ and $\varepsilon>0$.
		Then,  there exists a finite-rank linear operator $T_{\hspn N}$ such that $\|T_{\hspn N}-C\|_{\fU}\hspn< \frac{\varepsilon}{2(M+1)} $, where $M=\sup_{j\in J}\|S_{\hspn j}\|$, because $\fF(\Ho)$ is dense in $\fU$ with respect to  $\|\cdot\|_{\fU}$. By the above argument there exists $j_{0}\in J$ such that $\|S_{\hspn j} T_{\hspn N}-T_{\hspn N}\|_{\fU}< \dfrac{\varepsilon}{2}$ for all $j\succeq j_{0}$. In this way, we have  for all $j\succeq j_{0}$
		\begin{align*}\label{aux-n-3-c-opi2}
			\|S_{\hspn j}C-C\|_{\fU}
			&\leq \|T_{\hspn N}-C\|_{\fU}
			+\|S_{\hspn j}C-S_{\hspn j}T_{\hspn N}\|_{\fU}
			+\|S_{\hspn j}T_{\hspn N}-T_{\hspn N}\|_{\fU}\nonumber\\
			&\leq
			\left(\sup_{j\in J} \|S_{\hspn j}\|+1\right) \|C-T_{\hspn N}\|_{\fU}
			+\|S_{\hspn j}T_{\hspn N}-T_{\hspn N}\|_{\fU}
			< \varepsilon.
		\end{align*} 
		The proof that $(TS_{\hspn j})_{j\in J}$  converges to $T\in\fU$ follows almost literally as above, using the identity $(u\otimes e)S_{\hspn j}=u\otimes (S_{\hspn j}^{*}e)$.  Consequently,  $(S_{\hspn j})_{j\in J}$ is an approximate identity in $\mathfrak{U}$.
	\end{proof}

	\begin{proposition}\label{prop:projections_sequence-opi2}
		Let $(b_j)_{j\in \N}$
		be an orthonormal basis for $\Ho$.
		For every $m$ denote by $S_{\hspn m}$ the orthogonal projection
		onto the subspace generated by $b_1,\ldots,b_m$:
		\[
		S_{\hspn m} = \sum_{j=1}^m b_{j}\otimes b_{j}, 
		\]
		Then $(S_{\hspn m})_{m\in\N}$
		is an approximate identity for $\mathfrak{U}$.
	\end{proposition}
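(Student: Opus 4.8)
The plan is to verify that the sequence of orthogonal projections $(S_{\hspn m})_{m\in\N}$ satisfies the hypotheses of Proposition~\ref{prop-sec-appr-id-comp-opi2}, so that the conclusion follows immediately. Since each $S_{\hspn m}$ is an orthogonal projection onto a finite-dimensional subspace, it is self-adjoint, i.e.\ $S_{\hspn m}^{*}=S_{\hspn m}$, and it has finite rank, so $S_{\hspn m}\in\fF(\Ho)\subseteq\fU$. Moreover, $\|S_{\hspn m}\|=1$ for every $m$ (each nonzero projection has operator norm one), hence $\sup_{m\in\N}\|S_{\hspn m}\|=1<\infty$, which secures the boundedness assumption of Proposition~\ref{prop-sec-appr-id-comp-opi2}.

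It remains to check condition (ii) of that proposition, namely $S_{\hspn m}v\to v$ and $S_{\hspn m}^{*}v\to v$ for every $v\in\Ho$. Because $S_{\hspn m}^{*}=S_{\hspn m}$, the two requirements coincide, so I only need the first. For a fixed $v\in\Ho$, expand $v$ in the orthonormal basis as $v=\sum_{j\in\N}\langle v,b_j\rangle b_j$; then $S_{\hspn m}v=\sum_{j=1}^{m}\langle v,b_j\rangle b_j$ is exactly the $m$-th partial sum of this expansion. By Parseval's identity the series converges to $v$ in norm, so $\|S_{\hspn m}v-v\|^{2}=\sum_{j>m}|\langle v,b_j\rangle|^{2}\to 0$ as $m\to\infty$. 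This establishes $S_{\hspn m}v\to v$, and therefore also $S_{\hspn m}^{*}v\to v$.

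Having verified both the uniform boundedness and the strong convergence of $S_{\hspn m}$ and $S_{\hspn m}^{*}$ to the identity on each vector, I invoke the implication (ii)$\Rightarrow$(i) of Proposition~\ref{prop-sec-appr-id-comp-opi2} to conclude that $(S_{\hspn m})_{m\in\N}$ is an approximate identity in $\fU$. There is essentially no obstacle here: the statement is a clean specialization of the preceding proposition, and the only content is recognizing that the projections are self-adjoint (collapsing the two convergence conditions into one) and that their action on a vector is the partial-sum operator for the basis expansion, whose convergence is Parseval. The main point worth stating explicitly is that the convergence $S_{\hspn m}v\to v$ holds in the Hilbert space norm for \emph{every} $v$, which is all that Proposition~\ref{prop-sec-appr-id-comp-opi2} requires, even though the approximate identity property itself is about convergence in the finer norm $\|\cdot\|_{\fU}$.
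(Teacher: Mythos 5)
Your proof is correct and follows essentially the same route as the paper: both verify the hypotheses of Proposition~\ref{prop-sec-appr-id-comp-opi2} (membership in $\fU$, self-adjointness, $\|S_{\hspn m}\|=1$, and strong convergence $S_{\hspn m}v\to v$) and then invoke the implication (ii)$\Rightarrow$(i). The only difference is that you spell out the Parseval argument for the strong convergence, which the paper takes as immediate.
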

	
	\begin{proof}
		Since $S_{\hspn m}\in\mathfrak{U}$, $S_{\hspn m}=S_{\hspn m}^{*}$,  $\|S_{\hspn m}\|=1$, for every $m\in\N$, and $S_{\hspn m}v\rightarrow v$ for every $v\in \Ho$, by Proposition \ref{prop-sec-appr-id-comp-opi2} we conclude that $(S_{\hspn m})_{m\in\N}$ is an approximate identity in $\fU$.
	\end{proof}
	
	Now, we are ready to provide necessary and sufficient conditions for the left and right invertibility in operator ideals.
	
	\begin{theorem}\label{prop:ApprInvCompact-opi}
		Let $T\in\mathfrak{U}$. Then following statements hold:
		\begin{enumerate}
			\item[1.] $T\in\AppInv_{r}(\fU)$ 
			if and only if  $T(\Ho)$ is dense in $\Ho$.
			\item[2.] $T\in\AppInv_{l}(\fU)$ 
			if and only if $\ker(T)=\{0\}$.
		\end{enumerate}
	\end{theorem}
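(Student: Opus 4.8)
The plan is to characterize approximate one-sided invertibility in $\fU$ through the density results already established (Theorem~\ref{thm:dense_ideals_implies_ainv} and Theorem~\ref{prop:ApprInvCompact-opi}'s own machinery), combined with the singular value decomposition~\eqref{eq:HSch-opi} and the explicit approximate identities of Propositions~\ref{prop-sec-appr-id-comp-opi2} and~\ref{prop:projections_sequence-opi2}. For part~(1), since $\fU$ has an approximate identity, Theorem~\ref{thm:dense_ideals_implies_ainv} tells us that $T\in\AppInv_r(\fU)$ if and only if $T\fU$ is dense in $\fU$. So the real content is to show that $T\fU$ is dense in $\fU$ (in the $\|\cdot\|_{\fU}$-norm) precisely when $\Rang(T)$ is dense in $\Ho$ (in the Hilbert norm). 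For part~(2), I would first use Proposition~\ref{AppInvr-AppInvl} together with the relation $\ker(T)=\{0\}\Leftrightarrow\overline{\Rang(T^*)}=\Ho$ to reduce left invertibility of $T$ to right invertibility of $T^*$, thereby deriving~(2) from~(1).

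First I would prove the easy directions. If $T\in\AppInv_r(\fU)$, pick a net $(R_j)$ with $(TR_j)$ an approximate identity; then for any $v\in\Ho$ with orthogonal projection $P_v$ we have $TR_jP_v\to P_v$ in $\fU$, hence (using $\|A\|\le\|A\|_{\fU}$) also in operator norm, which forces $TR_jv\to v$, so $v\in\overline{\Rang(T)}$; thus $\Rang(T)$ is dense. Conversely, suppose $\Rang(T)$ is dense in $\Ho$. Using the singular value decomposition $T=\sum_{n}\lambda_n u_n\otimes e_n$, density of $\Rang(T)$ means the orthonormal system $(u_n)$ is complete, i.e.\ $(u_n)_{n\in\N}$ is an orthonormal basis of $\Ho$. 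The idea is to construct, for each $m$, a finite-rank operator $R_m$ so that $TR_m=\sum_{j=1}^m u_j\otimes u_j =: S_m$ is the orthogonal projection onto $\linspan\{u_1,\dots,u_m\}$; explicitly one sets $R_m=\sum_{j=1}^m \lambda_j^{-1} e_j\otimes u_j$, which lies in $\fF(\Ho)\subseteq\fU$ and satisfies $TR_m=S_m$ directly from the orthonormality of $(e_n)$ and $(u_n)$. By Proposition~\ref{prop:projections_sequence-opi2} applied to the basis $(u_n)$, the net $(S_m)_{m\in\N}$ is an approximate identity in $\fU$, so $(TR_m)_{m}$ is an approximate identity and $T\in\AppInv_r(\fU)$.

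For part~(2), the cleanest route is to invoke the involutive structure: $\fU$ is a normed $*$-algebra (the adjoint is an isometric involution by~\eqref{norminvolution}, which holds for the operator norm and, on $\fF(\Ho)$, is compatible with~\ref{OI4}), so Proposition~\ref{AppInvr-AppInvl} gives $T\in\AppInv_l(\fU)\Leftrightarrow T^*\in\AppInv_r(\fU)$. By part~(1), $T^*\in\AppInv_r(\fU)$ iff $\Rang(T^*)$ is dense in $\Ho$, and the standard Hilbert space identity $\overline{\Rang(T^*)}=(\ker T)^{\perp}$ shows this is equivalent to $\ker(T)=\{0\}$. This yields statement~(2).

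The main obstacle I anticipate is the forward direction of part~(1)---passing from density of $T\fU$ in the ideal norm $\|\cdot\|_{\fU}$ to density of $\Rang(T)$ in the coarser Hilbert norm, and conversely building the \emph{explicit} right multipliers $R_m\in\fU$ whose products with $T$ reproduce a known approximate identity. The construction $R_m=\sum_{j=1}^m \lambda_j^{-1}e_j\otimes u_j$ relies on $\lambda_j\ne 0$ for $j\le m$, which is guaranteed once $(u_n)$ is a complete orthonormal system (any zero singular value would drop the corresponding $u_n$ from the list, contradicting completeness); care is needed to argue that density of $\Rang(T)$ really does force the $u_n$ to span $\Ho$, i.e.\ that no nonzero vector is orthogonal to every $u_n$. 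Once that completeness is secured, the verification that $TR_m=S_m$ and the appeal to Proposition~\ref{prop:projections_sequence-opi2} are routine.
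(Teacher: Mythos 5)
Your proposal is correct and follows essentially the same route as the paper: part (2) is reduced to part (1) via Proposition~\ref{AppInvr-AppInvl} and the identity $\ker T=(\Rang(T^{*}))^{\perp}$, and in part (1) the singular value decomposition is used to build finite-rank right multipliers whose products with $T$ are exactly the rank-$m$ projections of Proposition~\ref{prop:projections_sequence-opi2}. The only cosmetic differences are that the paper handles the forward direction of (1) contrapositively (a unit vector $a\perp T(\Ho)$ gives $\|TU-P_a\|_{\fU}\ge 1$ for all $U\in\fU$, so $T\fU$ is not dense and Theorem~\ref{thm:dense_ideals_implies_ainv} applies) where you argue directly via $TR_jP_v\to P_v$, and your placement $R_m=\sum_{j=1}^{m}\lambda_j^{-1}\,e_j\otimes u_j$ with $TR_m=\sum_{j=1}^{m}u_j\otimes u_j$ is the internally consistent version of the paper's construction.
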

	
	\begin{proof}
		1. Suppose that $T(\Ho)$ is not dense in $\Ho$. Choose $a\in\Ho$ such that $\|a\|=1$ and $a\perp T(\Ho)$. Then the orthogonal projection $P_{a}=a\otimes a$ from $\Ho$ onto $\operatorname{span}\{a\}$ belongs to $\mathfrak{U}$ and  for every operator $U\in\mathfrak{U}$ we have
		$\|TU-P_a\|_{\fU}\ge\|TU-P_a\|\ge \|TUa-P_a a\|\ge\operatorname{dist}(a,T(\Ho))=1.$
		Since the right ideal $T\mathfrak{U}$
		is not dense in $\mathfrak{U}$, the operator $T$ is not approximately right invertible  in $\mathfrak{U}$ by Theorem \ref{thm:dense_ideals_implies_ainv}.
		
		Let $T\in\mathfrak{U}$ be such that $T(\Ho)$ is dense in $\Ho$. Write $T$ in the form \eqref{eq:HSch-opi} with $(u_{n})_{n\in\N}$ being an orthonormal basis of $\Ho$
		and $\lambda_j=a_{j}(T)>0$ for every $j$.
		Let 
		$\displaystyle U_m=\sum_{k=1}^m \frac{1}{\lambda_k}\,u_{k}\otimes e_{k}$.
		Then
		\[
		T U_m v
		=\sum_{j\in\N} \sum_{k=1}^m \lambda_j\,\frac{1}{\lambda_k}
		\langle v,e_k\rangle\, \langle u_k,u_j\rangle\,e_j
		=\sum_{j=1}^m e_{j}\otimes e_{j}(v).
		\]
		Thus, 
		the sequence $(T U_m)_{m\in\N}$ is an approximate identity
		in $\mathfrak{U}$ by Proposition~\ref{prop:projections_sequence-opi2}.
		
		2.  The proof follows  by  Proposition \ref{AppInvr-AppInvl} and by identity $\ker T=(\Rang(T^{*}))^{\perp}$.
	\end{proof}

	By \cite[Example 5.1.1]{Murphy},
	the pure states of the algebra $\CompactOperators(\Ho)$
	are of the form $\tau_a$ for some $a\in\Ho$ with $\|a\|=1$, where
	$\tau_a(T)\eqdef\langle T a,a\rangle$.
	This fact and the correspondence between the pure states and maximal modular left or right ideals~\cite[Example 5.1.1]{Murphy},
	yields the upcoming simple description of such ideals in the algebra $\CompactOperators(\Ho)$.
	Recall that $N_\tau$ is defined
	by~\eqref{Nt} for every pure state $\tau$.
	In our situation,
	\[
	N_{\tau_a}
	=\{T\in\CompactOperators(\Ho)\colon\ \tau_a(T^\ast T)=0\}
	=\{T\in\CompactOperators(\Ho)\colon\
	\langle T^\ast T a,a\rangle = 0\}
	=\{T\in\CompactOperators(\Ho)\colon\ Ta=0\}.
	\]
	
	\begin{proposition}\label{prop:maximal_modular_ideals_in_CompactOperators}
		1. The maximal right modular ideals of $\CompactOperators(\Ho)$ are of the form $N_{\tau_a}^{\ast}$
		for some $a\in\Ho$ with $\|a\|=1$.
		Moreover,
		\[
		N_{\tau_a}^{\ast}
		=\{T\in\CompactOperators(\Ho)\colon\ 
		\Rang(T)\perp a\}
		=(I-P_a)\CompactOperators(\Ho).
		\]
		2. The maximal left modular ideals of $\CompactOperators(\Ho)$ are of the form $N_{\tau_a}$
		for some $a\in\Ho$ with $\|a\|=1$.
		Moreover,
		\[
		N_{\tau_a}
		=\{T\in\CompactOperators(\Ho)\colon\ 
		a\in\ker(T)\}
		=\CompactOperators(\Ho)(I-P_a).
		\]
	\end{proposition}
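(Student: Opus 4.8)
The plan is to derive both descriptions from the identity $N_{\tau_a}=\{T\in\CompactOperators(\Ho)\colon Ta=0\}$ established just before the statement, combined with the $*$-symmetry of the involution on $\CompactOperators(\Ho)$, which is the same principle underlying Proposition~\ref{AppInvr-AppInvl}.

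I would begin with the left ideals of part~2. The identity $N_{\tau_a}=\{T\colon Ta=0\}$ is verbatim the same as $\{T\colon a\in\ker(T)\}$, so the first reformulation costs nothing. For the second, I would prove the two inclusions defining $N_{\tau_a}=\CompactOperators(\Ho)(I-P_a)$. If $Ta=0$, then using $TP_a=(Ta)\otimes a=0$ one gets $T=T(I-P_a)\in\CompactOperators(\Ho)(I-P_a)$; conversely, for any $S\in\CompactOperators(\Ho)$ the operator $S(I-P_a)$ annihilates $a$ because $(I-P_a)a=a-a=0$. Since $\tau\mapsto N_\tau$ is a bijection between pure states and maximal modular left ideals by \cite[Theorem 5.3.5]{Murphy}, and the pure states of $\CompactOperators(\Ho)$ are precisely the $\tau_a$ with $\|a\|=1$ by \cite[Example 5.1.1]{Murphy}, this description exhausts all maximal modular left ideals.

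For the right ideals of part~1 I would pass through the involution. The conjugate-linear anti-automorphism $T\mapsto T^*$ carries left ideals to right ideals, and I would check that it sends maximal modular left ideals bijectively to maximal modular right ideals; hence the latter are exactly the sets $N_{\tau_a}^*$. Writing $S=T^*$, the condition $Ta=0$ becomes $S^*a=0$, so $N_{\tau_a}^*=\{S\colon S^*a=0\}$. The equivalence $\langle Sh,a\rangle=\langle h,S^*a\rangle$ shows $S^*a=0$ iff $a\perp\Rang(S)$, giving the middle description. Finally $a\perp\Rang(S)$ is equivalent to $P_aS=a\otimes(S^*a)=0$, so $S=(I-P_a)S\in(I-P_a)\CompactOperators(\Ho)$, while conversely every operator of that form has range in $\{a\}^\perp$; this yields $N_{\tau_a}^*=(I-P_a)\CompactOperators(\Ho)$.

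The only genuinely non-routine point, and the step I expect to be the main obstacle, is the claim that $J\mapsto J^*$ restricts to a \emph{bijection} between the two families, so that the $N_{\tau_a}^*$ are all the maximal modular right ideals and not merely some of them. I would verify this directly: if $v$ is a left identity modulo a left ideal $J$, then $v^*$ is a right identity modulo $J^*$, since $(vx-x)^*=x^*v^*-x^*$ runs through $J^*$ as $x$ runs through the algebra; and maximality is transported because $*$ is an inclusion-preserving involutive bijection of the ideal lattice. Everything else reduces to the elementary operator identities $TP_a=(Ta)\otimes a$, $P_aS=a\otimes(S^*a)$, and $(I-P_a)a=0$, none of which requires any estimate.
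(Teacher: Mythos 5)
Your proof is correct and takes essentially the same route the paper intends: the paper states this proposition without a separate proof because it is meant to follow from exactly the ingredients you use, namely Murphy's identification of the pure states as the $\tau_a$ with $\|a\|=1$, the bijection $\tau\mapsto N_\tau$ with maximal modular left ideals, the computation $N_{\tau_a}=\{T\colon Ta=0\}$ carried out just before the statement, and the adjoint symmetry (in the spirit of Proposition~\ref{AppInvr-AppInvl}) to transfer the left-ideal description to right ideals, together with the elementary rank-one identities $TP_a=(Ta)\otimes a$ and $P_aS=a\otimes(S^{*}a)$. One cosmetic remark: under the paper's convention a \emph{left} ideal $J$ is modular when $xv-x\in J$ for all $x$, so the adjoint computation should read $(xv-x)^{*}=v^{*}x^{*}-x^{*}\in J^{*}$, giving the right-ideal modularity condition for $J^{*}$; your displayed computation uses the opposite pairing of conditions, but the argument is identical after swapping them and nothing else is affected.
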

	
	In the C*-algebra $\KH$  of compact operators,
	we may apply Theorem~\ref{Appinvl-modi-C*}
	and Proposition~\ref{prop:maximal_modular_ideals_in_CompactOperators}
	for proving
	Theorem~\ref{prop:ApprInvCompact-opi}.

	\section{An application to the density in Banach modules}\label{sec:applications}

	In this section we work with products $ab$, where $a$ and $b$ belong to two different spaces. We suppose that $(\A,\|\cdot\|)$
	is a non-unital Banach algebra and $(B,\|\cdot\|_{B})$ is a left Banach $\A$-module with respect to an operation $\bullet\colon\A\times B\to B$,
	see~\cite[Definition~32.14]{Hewitt-Ross-II}.
	In all this section we will require a strong additional assumption:
	$B=\A\bullet B$.
	
	The following result and its corresponding proof may be found in~\cite[Remark 32.33 (a)]{Hewitt-Ross-II}, we reproduce it here for the sake of completeness.
	
	\begin{proposition}
		\label{prop-module-1}
		Let $(\aid_{j})_{j\in J}$ be a left approximate identity in $\A$.
		Then $(\aid_{j}\bullet b)_{j\in J}$ converges to $b$ for any $b\in B$.
	\end{proposition}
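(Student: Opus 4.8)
The plan is to use the factorization hypothesis $B=\A\bullet B$ to reduce the claim to elements of the special form $a\bullet c$ (with $a\in\A$, $c\in B$), on which the left approximate identity property of $(\aid_{j})_{j\in J}$ in $\A$ transfers to $B$ through the associativity of the module action. This converts a statement about $B$ into the already-known limit $\aid_j a\to a$ inside $\A$.

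First I would fix $b\in B$ and invoke $B=\A\bullet B$ to write $b$ as a finite sum $b=\sum_{k=1}^{n}a_{k}\bullet c_{k}$ with $a_{k}\in\A$ and $c_{k}\in B$. Then, using the left-module identity $(xy)\bullet c=x\bullet(y\bullet c)$ valid for $x,y\in\A$ and $c\in B$, I would compute
\[
\aid_{j}\bullet b=\sum_{k=1}^{n}\aid_{j}\bullet(a_{k}\bullet c_{k})=\sum_{k=1}^{n}(\aid_{j}a_{k})\bullet c_{k}.
\]
Because $(\aid_{j})_{j\in J}$ is a left approximate identity, $\aid_{j}a_{k}\to a_{k}$ in $\A$ for each $k$, and since the module action is a bounded bilinear map there is a constant $C$ with $\|(\aid_{j}a_{k}-a_{k})\bullet c_{k}\|_{B}\le C\,\|\aid_{j}a_{k}-a_{k}\|\,\|c_{k}\|_{B}\to 0$. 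Hence each summand $(\aid_{j}a_{k})\bullet c_{k}$ converges to $a_{k}\bullet c_{k}$ in $B$, and adding the finitely many terms gives $\aid_{j}\bullet b\to\sum_{k=1}^{n}a_{k}\bullet c_{k}=b$.

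The one point to get right is the reading of the hypothesis $B=\A\bullet B$: I interpret it as the statement that every $b\in B$ is a finite linear combination of elementary products $a\bullet c$, so that the sum above is genuinely finite and neither a closure operation nor any boundedness of the approximate identity is required. Were $\A\bullet B$ instead understood as a \emph{closed} span, the argument would only establish convergence on a dense subspace, and completing the proof would demand a uniform bound on the maps $b\mapsto\aid_{j}\bullet b$---an extra hypothesis that is absent here. Thus the expected obstacle is one of interpretation rather than of estimation, and under the stated conventions the finite-sum computation closes the argument directly.
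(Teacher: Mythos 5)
Your proof is correct and follows essentially the same route as the paper: both reduce the claim via module associativity $(\aid_j a)\bullet c=\aid_j\bullet(a\bullet c)$ and the boundedness of the module action, so that $\|(\aid_j a-a)\bullet c\|_B\le \mathrm{const}\,\|c\|_B\,\|\aid_j a-a\|\to 0$. The only difference is interpretive: the paper reads $B=\A\bullet B$ as meaning every $b\in B$ is a \emph{single} elementary product $b=a\bullet c$ (as in Hewitt--Ross, and as guaranteed by Cohen-type factorization when available), so your finite-sum decomposition is an unnecessary but entirely harmless weakening of the hypothesis, and your argument goes through verbatim.
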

	
	\begin{proof}
		Let $b\in B$. By the assumption $B=\A\bullet B$, there exist $a\in\A$ and $c\in B$ such that $b=a\bullet c$.
		Thus, by~\cite[Definition 32.14]{Hewitt-Ross-II},
		\begin{align*}
			\|\aid_{j}\bullet b-b\|_{B}
			&=\|\aid_{j}\bullet (a\bullet c)-a\bullet c\|_{B}
			=\|(\aid_{j}a)\bullet c-a\bullet c\|_{B}
			\\
			&=\|(\aid_{j}a-a)\bullet c\|_{B}
			\leq \text{const}\,
			\|c\|_{B}\,\|\aid_{j}a-a\|.
		\end{align*}
		The last factor tends to zero,
		because $(\aid_{j})_{j\in J}$
		is a left approximate identity in $\A$.
	\end{proof}
	
	\begin{proposition}
		\label{prop-module-2}
		Let $x\in\AppInv_{r}(\A)$. Then $x\bullet B$ is dense in $B$.
	\end{proposition}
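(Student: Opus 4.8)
The plan is to reduce everything to the two results already at hand: the definition of right approximate invertibility and Proposition~\ref{prop-module-1}. Since $x\in\AppInv_{r}(\A)$, by Definition~\ref{def:ApproInv} there is a net $(r_{j})_{j\in J}$ in $\A$ such that $(x r_{j})_{j\in J}$ is an approximate identity in $\A$. In particular this net is a \emph{left} approximate identity, so I may feed it directly into Proposition~\ref{prop-module-1}.

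Concretely, first I would fix an arbitrary $b\in B$ and apply Proposition~\ref{prop-module-1} with the left approximate identity $\aid_{j}\eqdef x r_{j}$. That proposition yields
\begin{equation*}
	\lim_{j\in J}\,(x r_{j})\bullet b = b.
\end{equation*}
The next step is the only genuinely structural ingredient: the associativity of the module action, which is part of the defining axioms of a left Banach $\A$-module (see~\cite[Definition~32.14]{Hewitt-Ross-II}), gives $(x r_{j})\bullet b = x\bullet(r_{j}\bullet b)$. Writing $c_{j}\eqdef r_{j}\bullet b\in B$, I therefore obtain a net $\bigl(x\bullet c_{j}\bigr)_{j\in J}$ lying entirely in $x\bullet B$ and converging to $b$.

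Finally, since $b\in B$ was arbitrary, every element of $B$ is a limit of a net taking values in $x\bullet B$, which is exactly the statement that $x\bullet B$ is dense in $B$. I do not anticipate any real obstacle here: the argument is a direct transfer of the density result of Theorem~\ref{thm:dense_ideals_implies_ainv} from the algebra setting to the module setting, the only subtlety being to invoke the module associativity $(xr_{j})\bullet b = x\bullet(r_{j}\bullet b)$ so that the approximating net is manifestly of the form $x\bullet(\,\cdot\,)$. The standing hypothesis $B=\A\bullet B$ is used implicitly, since it is precisely what makes Proposition~\ref{prop-module-1} available.
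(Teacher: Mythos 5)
Your proof is correct and coincides with the paper's own argument: both take the net $(r_j)_{j\in J}$ with $(xr_j)_{j\in J}$ an approximate identity, feed it into Proposition~\ref{prop-module-1}, and use the module associativity $(xr_j)\bullet b = x\bullet(r_j\bullet b)$ to see that the approximating net lies in $x\bullet B$. Your added remarks (that a two-sided approximate identity is in particular a left one, and that $B=\A\bullet B$ enters only through Proposition~\ref{prop-module-1}) are accurate clarifications, not deviations.
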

	
	\begin{proof}
		Since $x\in\AppInv_{r}(\A)$, there exists a net $(r_{j})_{j\in J}$ in $\A$ such that $(xr_{j})_{j\in J}$ is an approximate identity in $\A$. Now, given any $b\in B$,
		by Proposition~\ref{prop-module-1} we have that $\bigl((xr_{j})\bullet b\bigr)_{j\in J}$ converges to $b$.
		On the other hand, for every $j$ in $J$ we have
		$(xr_{j})\bullet b
		=x\bullet(r_{j}\bullet b)
		\in x\bullet B$,
		i.e., the net $\bigl((xr_{j})\bullet b\bigr)_{j\in J}$
		takes values in $x\bullet B$.
		Consequently, $\overline{x\bullet B}=B$.
	\end{proof}
	
	In particular, the results can be applied to $\A$ being the convolution algebra $\Lone(G)$ and $B$ being a homogeneous Banach space. 
	Recall that a homogeneous Banach space on a non-discrete locally compact abelian group $G$ is a Banach space $(B,\|\cdot\|_{B})$ of measurable functions on $G$, such that is closed under translations, i.e., given any $f\in B$ we have $L_{x}f\in B$ for all $x\in G$, 
	the norm $\|\cdot\|_{B}$ is invariant under $L_{x}$,  and the map $x\mapsto L_{x}f$ of $G$ into $B$ is continuous. 
	Following~\cite{Wa77}, we define
	$\circledast\colon\Lone(G)\times B\to B$ by
	\[
	f\circledast g=\int_{G}f(x)L_{x}g\,\dif\mu(x),
	\]
	where the integral is understood in the weak sense. It is known~\cite[Theorem 2.11]{Wa77}
	that $B$ is a $\Lone(G)$-module
	and $\Lone(G)\circledast B=B$.

	The following results are particular cases of Propositions~\ref{prop-module-1} and~\ref{prop-module-2}, therefore we omit the proof. Their particular cases are well known, when $(\aid_j)_{j\in J}$ is a Dirac net or a summability kernel in the sense of~\cite{ka76}.

	\begin{corollary} \label{cor:net+dense} Let $(B,\|\cdot\|_{B})$ be a homogeneous Banach space on $G$. 
		\begin{itemize}
			\item[(i)] If $g\in B$ and $(\aid_j)_{j\in J}$ is an approximate identity in $\Lone(G)$, then the net $( g\circledast\aid_j)_{j\in J}$ converges to $g$ in $B$. \item[(ii)] If $f\in\Lone(G)$ is such that $\widehat{f}(t)\ne0$ for every $t\in\widehat{G}$, then the set $f\circledast B$ is dense in $B$.
		\end{itemize}
	\end{corollary}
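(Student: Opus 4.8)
The plan is to read both parts as direct specializations of the two module results of this section, with $\A=\Lone(G)$, $B$ the given homogeneous Banach space, and $\bullet=\circledast$. First I would verify that the standing hypothesis $B=\A\bullet B$ of Section~\ref{sec:applications} holds here: by~\cite[Theorem 2.11]{Wa77} the space $B$ is an $\Lone(G)$-module under $\circledast$ and $\Lone(G)\circledast B=B$, so Propositions~\ref{prop-module-1} and~\ref{prop-module-2} apply verbatim with this data.

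For part~(i), an approximate identity in the commutative algebra $\Lone(G)$ is in particular a left approximate identity, so Proposition~\ref{prop-module-1} gives $\aid_j\circledast g\to g$ in $B$ for every $g\in B$. The one point worth spelling out is the order of the factors: since the module action is group convolution and $G$ is abelian, for $g\in B$ and $\aid_j\in\Lone(G)$ one has $g\circledast\aid_j=\aid_j\circledast g$ (both equal the convolution $g\ast\aid_j$), and hence $g\circledast\aid_j\to g$ as claimed.

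For part~(ii), the one substantive step is to pass from the non-vanishing of $\widehat{f}$ to the assertion $f\in\AppInv(\Lone(G))$. Here I would invoke Example~\ref{example:convolution_algebra}: the Gelfand transform of $\Lone(G)$ is the Fourier transform, $\Characters{\Lone(G)}$ is identified with $\widehat{G}$, and the Fourier image $\mathcal{F}(\Lone(G))$ is a Wiener algebra with an approximate identity, isometrically isomorphic to $\Lone(G)$. Under this isomorphism, $f$ is approximately invertible in $\Lone(G)$ if and only if $\widehat{f}$ is approximately invertible in $\mathcal{F}(\Lone(G))$, which by the equivalence (ii)$\Leftrightarrow$(v) of Theorem~\ref{thm:Wiener_ainv-Wie-alg} amounts exactly to $\widehat{f}$ not vanishing on $\widehat{G}$. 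Thus the hypothesis yields $f\in\AppInv(\Lone(G))=\AppInv_r(\Lone(G))$, the last equality by commutativity, and Proposition~\ref{prop-module-2} immediately gives that $f\circledast B$ is dense in $B$.

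The main obstacle is precisely this translation in part~(ii): one must make sure that approximate invertibility is transported across the Fourier isomorphism and that Theorem~\ref{thm:Wiener_ainv-Wie-alg} is applied to the Fourier algebra $\mathcal{F}(\Lone(G))$ rather than to $\Lone(G)$ itself. Everything else is bookkeeping, the commutativity remark in part~(i) being the only subtlety, which is why the statement can legitimately be left as an application of the two preceding propositions.
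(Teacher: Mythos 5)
Your proposal is correct and matches the paper's intended argument: the paper omits the proof precisely because the corollary is the specialization of Propositions~\ref{prop-module-1} and~\ref{prop-module-2} to $\A=\Lone(G)$, $\bullet=\circledast$, with the standing hypothesis $\Lone(G)\circledast B=B$ supplied by \cite[Theorem 2.11]{Wa77}. Your translation in part~(ii) of the non-vanishing of $\widehat{f}$ into $f\in\AppInv(\Lone(G))$ via Theorem~\ref{thm:Wiener_ainv-Wie-alg} applied to the Fourier algebra (cf.\ Example~\ref{example:convolution_algebra}) is exactly the link the paper relies on.
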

	
	\begin{example}
		Let $B=\Lp(G)$ or $B=C_{bu}(G)$. Then $f\circledast g$ is just $f\ast g$, see \cite[Remark 2.14]{Wa77}.
		For these spaces, Corollary \ref{cor:net+dense} provides a sufficient condition  for density of  the image of the convolution operator. By Theorem~\ref{thm:Wiener_ainv-Wie-alg},  this condition is necessary in the case $B=\Lone(G)$. 
		A natural problem is to find  necessary and sufficient conditions for other spaces $B$. \end{example}

	Above results are in close connection with recent papers of the second author~\cite{fegu20,fegu21} addressing the question of completeness of sets of shifts in minimal tempered standard spaces. In this situation non-vanishing Fourier transform implies that the set of translates is total in the Banach algebra.
	
	\begin{example}
		Let $(S,\|\cdot\|_{S})$ be a Segal algebra
		in $\Lone(G)$, see Example \ref{example:Fourier-Segal-algebra}. This means that $S$ is a homogeneous Banach space  and a dense subspace of $\Lone(G)$.  
		By \cite{du74} or by \cite[Theorem 2.16]{Wa77},  we have  $S=\Lone(G)\ast S$. So, Corollary \ref{cor:net+dense} 
		can be applied to $B=S$ with $f\circledast g=f\ast g$.
	\end{example}

	\section{Several questions and open problems}\label{Sec:rem-open-pro}
	
	With the development of this project we have seen that approximate invertibility is a generalization of invertibility and is related with the density of principal ideals.  
	Now we present some remarks, questions, and open problems related to the new concept, its properties, characterizations or extensions. 
	
	An immediate observation is that given a non-unital normed algebra with an approximate identity and non-trivial idempotent (or nilpotent) elements, the set of all approximately right invertible elements is proper. Indeed, if $\A$ is a non-unital normed algebra with an approximate identity and $x$ is a non-trivial idempotent or nilpotent element in $\A$, i.e., $x^{2}=x$ or $x^{n}=0$, for some $n\geq1$, respectively, then $x$ does not belong to $\AppInv_{r}(\A)$. Otherwise, there exists a net $(r_{j})_{j}$ in $\A$ such that $(xr_{j})_{j}$ is an approximate identity for $\A$. However, if $x=x^{2}\neq0$, then
	$\lim_{j}x(xr_{j})=\lim_{j}xr_{j}=x.$
	Therefore, the net $(xr_{j})_{j}$ has limit $x$ and hence $\A$ is unital by Proposition~\ref{prop:convergentAI}, which is a contradiction.  
	
	\begin{question}
		Are there non-unital normed algebras with approximate identities and non-trivial idempotent elements such that  this idempotent element does not belong to any maximal modular right ideal?
	\end{question}
	
	Despite several characterizations of approximately invertible elements in some classes of algebras presented in the paper, we were not able to find concrete algebras violating certain properties. One such case is described in the following problem.
	
	\begin{question}
		Are there non-unital Banach algebras with approximate identities such that the conditions (i) and (iii) from Proposition \ref{prop:rich_modular_ideals} are not equivalent?
	\end{question}

	In this paper we have mainly dealt with normed algebras, but some results may be immediately extended to topological algebras as well. In~\cite{ThatteBhatt} a wide class of topological algebras for which topological invertibility collapses to invertibility is provided. 	Therefore, we may ask 
	
	\begin{question}
		Is there a wider class of unital topological algebras where approximate invertibility coincides with invertibility? Provide a characterization of such algebras.
	\end{question}
	
	Recall that $x\in\A$ is approximately left and right invertible in $\A$, if there exist nets $(l_i)_{i\in I}$ and $(r_j)_{j\in J}$ such that the nets $(l_ix)_{i\in I}$ and $(xr_j)_{j\in J}$ are approximate identities in $\A$. Denote by $\AppInv_0(\A)$ the set of all true bilateral approximately invertible elements from $\A$, i.e., $x\in\AppInv_0(\A)$ if and only if there exists a net $(t_k)_{k\in K}$ such that the nets $(t_k x)_{k\in K}$ and $(x t_k)_{k\in K}$ are approximate identities in $\A$. Clearly, for each abelian topological algebra $\A$ it holds
	$\AppInv(\A) = \AppInv_0(\A)$. 
	
	\begin{question}
		Does there exist a non-abelian non-unital topological algebra $\A$ such that $\AppInv(\A) = \AppInv_0(\A)$? \end{question}
	
	Recent Abel results on sets of topologically quasi-invertible sets, see~\cite{A-ZR}, motivate further research on the structure of the sets of $\AppInv_{\ell}(\A)$ and  $\AppInv_r(\A)$ in $\A$. For instance, for which algebras $\A$ are these sets $G_\delta$-sets? This stimulates many questions regarding properties of an extension of the topological spectrum of elements, spectral mapping property, spectral radius, etc. 
	
	It would be interesting to generalize the concept of the spectrum to algebras with approximate identities. This generalization leads to the several questions.
	
	\begin{question}
		Given an element $a\in\A$, a complex number $\lambda$ and an approximate identity $(e_j)_{j\in J}$ in $\A$ we could consider the following condition:
		there exists a net $(r_j)_{j\in J}$ such that $((\lambda e_j - a) r_j)_{j\in J}$ is an approximate identity in $\A$.
		Does this condition depend on the selection of $(e_j)_{j\in J}$?
	\end{question}
	
	Since the approximate invertibility turns out to be related with the density of principal ideals, there may be some connections with tauberian theorems worth of further investigations
	(see recent papers \cite{fe88,fe15,fegu20,pivi19,vipira11}).

	\textbf{Acknowledgements.}
	The first named author wishes to thank the Universidad de Caldas for financial support and hospitality.
	This is a part of the first author’s project ``Elementos aproximadamente invertibles en C*-\'algebras y sus aplicaciones en teor\'ia de operadores''. Third author acknowledges the support of the Slovak Research and Development Agency under the contract No. DS-2016-0028 and APVV-16-0337. The forth author is grateful to the CONACYT (Mexico) project ``Ciencia de Frontera''  FORDECYT-PRONACES/61517/2020.

	\medskip\noindent
	Kevin Esmeral,\\
	\url{https://orcid.org/0000-0003-1147-4730},\\
	e-mail: kevin.esmeral@ucaldas.edu.co,\\
	Department of Mathematics, Universidad de Caldas, \\
	C\'odigo Postal 170004, 
	Manizales, COLOMBIA.
	
	\medskip\noindent
	Hans G. Feichtinger,\\
	\url{https://orcid.org/0000-0002-9927-0742},\\
	email: hans.georg.feichtinger@univie.ac.at,\\
	Faculty of Mathematics, University of Vienna, \\ Oskar-Morgenstern-Platz 1, Wien, 1090, AUSTRIA.
	
	\medskip\noindent
	Ondrej Hutn\'{i}k,\\
	\url{https://orcid.org/0000-0003-1189-9667},\\
	email: ondrej.hutnik@upjs.sk,\\
	Institute of Mathematics, Pavol Jozef \v{S}af\'{a}rik University in Ko\v{s}ice, \\ 
	Jesenn\'{a} 5, 040~01 Ko\v{s}ice, SLOVAKIA.
	
	\medskip\noindent
	Egor A. Maximenko,\\
	\url{https://orcid.org/0000-0002-1497-4338},\\
	e-mail: egormaximenko@gmail.com,\\
	Instituto Polit\'{e}cnico Nacional,\\
	Escuela Superior de F\'{i}sica y Matem\'{a}aticas,\\
	Apartado Postal 07730,
	Ciudad de M\'{e}xico, MEXICO.
	

\begin{thebibliography}{99}
		\bibitem{Abel}  M. Abel, Advertive topological algebras: In General Topological Algebras, Est. Math. Soc., Tartu, Math. Stud. 1 (2001) 14--24.
		
		\bibitem{A-ZR} M. Abel,  Y. J. Z\'{a}rate-Rodr\'{i}guez,, Topological spectrum of elements in topological algebras, Proc. Est. Acad. Sci. 67 (2018) 271--281.\\ \url{https://doi.org/10.3176/proc.2018.3.05}
		\bibitem{Akkar-Beddaa-Oudadess} M. Akkar, A. Beddaa, M. Oudadess, Topologically invertible elements in metrizable algebras, Indian J. Pure Appl. Math. 27 (2) (1996) 123--127.
		\bibitem{Peimbert-Hoyo}H. Arizmendi-Peimbert, A. Carrillo-Hoyo, On the Topologically Invertible Elements of a Topological Algebra, Math. Proc. R. Irish Acad. 107 (1) (2007) 73--80.\\ \url{https://doi.org/10.3318/PRIA.2007.107.1.73}
		\bibitem{Arveson}W. Arveson, An Invitation to C*-Algebras, Graduate Texts in Mathematics, vol. 39 Springer, New York, 1976. 
		\url{https://doi.org/10.1007/978-1-4612-6371-5}
		
		\bibitem{Boxer} L. Boxer, AANR’S and ARI Maps,  Topol. Proc. (1981) 219--226.\\ \url{http://topology.nipissingu.ca/tp/reprints/v06/tp06202.pdf}
		\bibitem{Brezis}H. Brezis, Functional Analysis, Sobolev Spaces and Partial Differential Equations, Springer, New York,  2010.  \url{https://doi.org/10.1007/978-0-387-70914-7}
		\bibitem{Burnham}  J. T. Burnham, Closed ideals in subalgebras of banach algebras. I, Proc. Amer. Math. Soc. 32 (2) (1972) 551--555.\url{http://dx.doi.org/10.2307/2037857}
		
		\bibitem{Calkin-1941} J. W. Calkin, Two-Sided Ideals and Congruences in the Ring of Bounded Operators in Hilbert Space, Ann. Math. 42 (4) (1941) 839--873. 
		\url{ http://dx.doi.org/10.2307/1968771}
		
		\bibitem{Conway} J. B. Conway, A Course in Functional Analysis, Graduate Texts in Mathematics, vol. 96, Springer, New York, 1985.  \url{http://dx.doi.org/10.1007/978-1-4757-3828-5}
		
		\bibitem{Dixmier} J. Dixmier , C*-algebras, North-Holland Mathematical Library, North-Holland Publ., Amsterdam, 1977.
		
		\bibitem{Dixon1}   P. G. Dixon, Approximate identities in normed algebras, Proc. London Math. Soc. 26 (3) (1973) 485--496. \url{http://dx.doi.org/10.1112/plms/s3-26.3.485}
		
		
		\bibitem{Dixon2} P. G. Dixon, Approximate identities in normed algebras II, Proc. London Math. Soc. 17 (1) (1978) 141--151.  \url{http://dx.doi.org/10.1112/jlms/s2-17.1.141}
		
		\bibitem{Dixon3} P. G. Dixon, Unbounded approximate identities in normed algebras, Glasgow Math. J. 34 (2) (1992) 189--192. \url{http://dx.doi.org/10.1017/S0017089500008703}
		
		
		\bibitem{Domar1956} Y. Domar, Harmonic analysis based on certain commutative Banach algebras, Acta Math. 96 (1956) 1--66.  \url{http://dx.doi.org/10.1007/BF02392357}
		
		\bibitem{Doran-Wichmann}  R. S. Doran, J. Wichmann, Approximate identities in normed algebras, Lecture Notes in Mathematics, vol. 768, Springer, 1979. \url{http://dx.doi.org/10.1007/BFb0098611}
		
		\bibitem{du74}D. H. Dunford, Segal algebras and left normed ideals, J. Lond. Math. Soc. 8 (3) (1974) 514--516. \url{http://dx.doi.org/10.1112/jlms/s2-8.3.514}
		
		\bibitem{EM}K. Esmeral, E. A. Maximenko, Radial Toeplitz operators on the Fock space and
		square-root-slowly oscillating sequences, Complex Anal. Oper. Theory 10 (7) (2016)
		1655--1677.\\ \url{http://dx.doi.org/10.1007/s11785-016-0557-0}
		
		\bibitem{EMV} K. Esmeral, E. A. Maximenko, N. Vasilevski, C*-algebra generated by angular
		Toeplitz operators on the weighted Bergman spaces over the upper half-plane, Integr. Equ. Oper. Theory 83 (3) (2015) 413--428.  \url{http://dx.doi.org/10.1007/s00020-015-2243-4}
		
		\bibitem{EV}  K. Esmeral, N. Vasilevski, C*-algebra generated by horizontal Toeplitz operators on the Fock space, Bol. Soc. Mat. Mex. 22 (2) (2016) 567--582. \\
		\url{http://dx.doi.org/10.1007/s40590-016-0110-1}
		
		\bibitem{fe88} H. G. Feichtinger, An elementary approach to Wiener’s third Tauberian theorem on
		euclidean n-space. in: Proceedings, Conference at Cortona 1984, Symposia Mathematica 29, Academic Press, (1984). pp. 267--301.
		
		\bibitem{Feichtinger} H. G. Feichtinger, Functional Analysis, Lecture notes, University Vienna, 2013. \url{http://www.univie.ac.at/NuHAG/FEICOURS/ws1314/FAws1314Fei.pdf}
		
		
		\bibitem{fe15}  H. G. Feichtinger, Choosing Function Spaces in Harmonic Analysis. in: R. Balan,
		M. Begué, J. J. Benedetto, W. Czaja, K. A. Okoudjou (Eds.), Excursions in Harmonic Analysis, Volume 4: The February Fourier Talks at the Norbert Wiener Center, Springer, Cham, 2015. pp. 65--101. \url{http://dx.doi.org/10.1007/978-3-319-20188-7_3}
		
		\bibitem{fegu20} H. G. Feichtinger, A. Gumber, Completeness of shifted dilates in invariant Banach
		spaces of tempered distributions, Proc. Amer. Math. Soc. (2021) (Early View articles).\\ \url{http://dx.doi.org/10.1090/proc/15564}
		
		\bibitem{fegu21} H. G. Feichtinger, A. Gumber, Completeness of sets of shifts in invariant Banach spaces of tempered distributions via Tauberian theorems, preprint (2020).\\
		\url{ http://arxiv.org/abs/2012.11127}
		
		\bibitem{HagenRochSilbermann} R. Hagen, S. Roch, B. Silbermann, Spectral Theory of Approximation Methods for Convolution Equations, Birkh\"{a}user, Basel--Boston--Berlin, 1995. \\
		\url{http://dx.doi.org/10.1007/978-3-0348-9067-0}
		
		\bibitem{Harte} R. Harte, Almost open mappings between normed spaces, Proc. Amer. Math. Soc.
		90 (2) (1984) 243--249. \url{http://dx.doi.org/10.2307/2045348}
		
		\bibitem{HHM}
		C. Herrera Ya\~{n}ez, 
		O. 	Hutn\'{i}k, E. A.  Maximenko, Vertical symbols, Toeplitz operators on weighted Bergman spaces over the upper half-plane and very slowly oscillating functions, Comptes Rendus Math. 352 (2) (2014) 129--132.\\
		\url{http://dx.doi.org/10.1016/j.crma.2013.12.004}
		
		\bibitem{HMV}
		C. Herrera Ya\~{n}ez, E. A. 
		Maximenko, N. L. Vasilevski, Vertical Toeplitz operators on the upper half-plane and very slowly oscillating functions, Integr. Equ. Oper. Theory 	77 (2) (2013) 149--166. \url{http://dx.doi.org/10.1007/s00020-013-2081-1}
		
		\bibitem{Hewitt-Ross-II} E. Hewitt, K. A. Ross, Abstract Harmonic Analysis: Structure and Analysis for Compact Groups, Grundl. Math. Wiss., vol. 2, Springer, Berlin, 1970.\\
		\url{http://dx.doi.org/10.1007/978-3-662-26755-4}
		
		\bibitem{H} O.  Hutn\'{i}k, On Toeplitz-type operators related to wavelets, Integr. Equ. Oper. Theory 63 (1) (2009) 29--46. \url{http://dx.doi.org/10.1007/s00020-008-1647-9}
		
		\bibitem{HMM}
		O. Hutn\'{i}k, E. A. Maximenko, A. Mi\v{s}kov\'{a},   Toeplitz localization operators: Spectral functions density, Complex Anal. Oper. Theory 10 (8) (2016) 1757--1774.\\
		\url{http://dx.doi.org/10.1007/s11785-016-0564-1}
		
		\bibitem{Kaniuth} E. Kaniuth, A Course in Commutative Banach Algebras, Graduate Texts in Mathematics, Springer Science+Business Media, LLC, New York, 2009. \\
		\url{http://dx.doi.org/10.1007/978-0-387-72476-8}
		
		\bibitem{ka76} Y. Katznelson, An Introduction to Harmonic Analysis, 2nd corrected ed., Dover Publications Inc., New York, 1976.
		
		\bibitem{Kromer} S. Kr\"{o}mer,Global invertibility for orientation--preserving Sobolev maps via invertibility on or near the boundary, Arch. Rational Mech. Anal. 238 (3) (2020) 1113--1155.\\ \url{http://dx.doi.org/10.1007/s00205-020-01559-7}
		
		\bibitem{Larsen} R. Larsen, Banach Algebras: An Introduction, Pure and Applied Mathematics, vol. 24, Marcel Dekker Inc., 1973.
		
		\bibitem{Murphy}G. J. Murphy, C*-Algebras and Operator Theory, Academic Press Inc., New York,
		1990.\\  \url{http://dx.doi.org/10.1016/C2009-0-22289-6}
		
		\bibitem{Najmi} A. Najmi, Ideal theory in topological algebras, Turk. J. Math. 28 (4) (2004) 313--333.\\
		\url{https://journals.tubitak.gov.tr/math/abstract.htm?id=7055}
		
		\bibitem{Palmer} T. W. Palmer, Banach Algebras and the General Theory of *-Algebras, Encyclopedia
		of Mathematics and its Applications, vol. 79, Cambridge University Press, 1994.\\
		\url{http://dx.doi.org/10.1017/CBO9781107325777}
		
		\bibitem{Pietsch-1980}  A. Pietsch, Operator Ideals, North-Holland Mathematical Library, vol. 20, VEB
		Deutscher Verlag der Wissenschaften, Berlin, 1980.
		
		\bibitem{Pietsch-2017}A. Pietsch, A new approach to operator ideals on Hilbert space and their traces, Integr. Equ. Oper. Theory 89 (4) (2017) 595--606.  \url{http://dx.doi.org/10.1007/s00020-017-2410-x}
		
		\bibitem{pivi19}] S.Pilipovi\'{c}, J. Vindas,  Tauberian class estimates for vector-valued distributions, Mat. Sb. 210 (2) (2019) 272--296. \url{http://dx.doi.org/10.1070/SM9061}
		\bibitem{Reiter71} H. Reiter, L$^{1}$-algebras and Segal Algebras, Lecture Notes in Mathematics, vol. 231, Springer, Berlin, 1971. \url{http://dx.doi.org/10.1007/BFb0060759}
		
		\bibitem{ReiterStegeman}J. D. Reiter, H.and Stegeman, Classical Harmonic Analysis and Locally Compact Groups, 2 ed., London Mathematical Society Monographs New Series, Oxford University Press, Oxford, 2000.
		
		
		\bibitem{Rudin} W. Rudin, Functional Analysis, 2 ed., McGraw-Hill, New York, 1991.
		
		\bibitem{SchulzBritsHasse2017} F. Schulz, R. Brits, M. Hasse, Identities, approximate identities and topological
		divisors of zero in Banach algebras, J. Math. Anal. Appl. 455 (2) (2017) 1627--1635.\\ \url{http://dx.doi.org/10.1016/j.jmaa.2017.06.064}
		
		\bibitem{Schuster} T. Schuster, The Method of Approximate Inverse: Theory and Applications, Lecture
		Notes in Mathematics, vol. 1906, Springer, Berlin, 2007.\\ \url{http://dx.doi.org/10.1007/978-3-540-71227-5}
		
		\bibitem{Segal} I. E. Segal,
		Irreducible representations of operator algebras,
		Bull. Amer. Math. Soc. 53 (2) (1947) 73--89. \url{http://dx.doi.org/10.1090/S0002-9904-1947-08742-5}
		
		\bibitem{ThatteBhatt} A. D. Thatte, J. Bhatt Subhash, On topolizing invertibility, Indian J. Pure Appl. Math. 15 (12) (1984) 1308--1312.\\ \url{https://insa.nic.in/writereaddata/UpLoadedFiles/IJPAM/20005a6e_1308.pdf}
		
		\bibitem{vipira11} J. Vindas, S.  Pilipovi\'{c}, D.Raki\'{c}, Tauberian theorems for the wavelet transform, J. Fourier Anal. Appl. 17 (1) (2011) 65--95.  \url{http://dx.doi.org/10.1007/s00041-010-9146-1}
		
		\bibitem{Wa77}H. Wang, Homogeneous Banach Algebras, Marcel Dekker, New York, 1977.\\ \url{http://dx.doi.org/10.1201/9781003071754}
		
		\bibitem{Weil} A. Weil, L'int\'{e}gration dans les groupes topologiques et ses applications, Actualit\'{e}s Sci. Ind. No. 869, Hermann, Paris, 1940.\\
		\url{https://doi.org/10.1007/978-1-4757-1705-1_36} 
		\bibitem{Zames} G. Zames, Feedback and optimal sensitivity: Model reference transformations, multiplicative seminorms, and approximate inverses, IEEE Trans. Automatic Control 26 (2) (1981) 301--320.\\ \url{http://dx.doi.org/10.1109/TAC.1981.1102603}
		
		\bibitem{Zelazko} W. \.{Z}elazko, Banach Algebras, PWN--Polish Scientific Publishers Waszawa, Elsevier, Amsterdam--London--New York, 1973.
		
		\bibitem{Zohri-Jabbari-2010} A. Zohri, A. Jabbari, Topological quasi invertible elements in topological algebras, AIP Conference Proceedings 1309 (2010) 933--938. 
		\url{http://dx.doi.org/10.1063/1.3525228}
		
	\end{thebibliography}
\end{document}